\newtheorem{theorem}{Theorem}[section]
\newtheorem{lemma}[theorem]{Lemma}
\newtheorem{proposition}[theorem]{Proposition}
\newtheorem{corollary}[theorem]{Corollary}
\newtheorem{definition}[theorem]{Definition}
\newtheorem{remark}[theorem]{Remark}
\newtheorem{example}{\it Example\/}
\journal{Transportation Research Part B}
\begin{document}

\begin{frontmatter}

%% Title, authors and addresses

%% use the tnoteref command within \title for footnotes;
%% use the tnotetext command for the associated footnote;
%% use the fnref command within \author or \address for footnotes;
%% use the fntext command for the associated footnote;
%% use the corref command within \author for corresponding author footnotes;
%% use the cortext command for the associated footnote;
%% use the ead command for the email address,
%% and the form \ead[url] for the home page:
%%
%% \title{Title\tnoteref{label1}}
%% \tnotetext[label1]{}
%% \author{Name\corref{cor1}\fnref{label2}}
%% \ead{email address}
%% \ead[url]{home page}
%% \fntext[label2]{}
%% \cortext[cor1]{}
%% \address{Address\fnref{label3}}
%% \fntext[label3]{}

 \begin{center}
 \textcolor{blue}{ARTICLE LINK:  http://www.sciencedirect.com/science/article/pii/S0191261515000971
\\  PLEASE CITE THIS ARTICLE AS\\ 
Han, K., Szeto, W.Y., Friesz, T.L., 2015. Formulation, existence, and computation of boundedly rational dynamic user equilibrium with fixed or endogenous user tolerance. \\Transportation Research Part B 79, 16-49.}
 \line(1,0){469}
 \end{center}

\title{Formulation, existence, and computation of boundedly rational dynamic user equilibrium with fixed or endogenous user tolerance}

%% use optional labels to link authors explicitly to addresses:
%% \author[label1,label2]{<author name>}
%% \address[label1]{<address>}
%% \address[label2]{<address>}

\author[ic]{Ke Han \corref{cor}}
\ead{k.han@imperial.ac.uk}

\author[hk]{W. Y. Szeto}
\ead{ceszeto@hku.hk}

\author[psu]{Terry L. Friesz}
\ead{tfriesz@psu.edu}

\cortext[cor]{Corresponding author}

\address[ic]{Department of Civil and Environmental Engineering, Imperial College London, United Kingdom.}
\address[hk]{Department of Civil Engineering, the University of Hong Kong, China.}
\address[psu]{Department of Industrial and Manufacturing Engineering, Pennsylvania State University, USA.}

\begin{abstract}
This paper analyzes {\it simultaneous route-and-departure-time} (SRDT) {\it dynamic user equilibrium} (DUE) that incorporates the notion of {\it boundedly rational} (BR) user behavior in the selection of departure time and route choices. Intrinsically, the {\it boundedly rational} dynamic user equilibrium (BR-DUE) model we present assumes that travelers do not always seek the least costly route-and-departure-time choice. Rather, their perception of travel cost is affected by an indifference band describing travelers' tolerance of the difference between their experienced travel costs and the minimum travel cost. An extension of the BR-DUE problem is the so-called {\it variable tolerance dynamic user equilibrium} (VT-BR-DUE) wherein endogenously determined tolerances may depend not only on paths, but also on the established path departure rates.

This paper presents a unified approach for modeling both BR-DUE and VT-BR-DUE, which makes significant contributions to model formulation, analysis of existence, solution characterization, and heuristic numerical computation of such problems. The VT-BR-DUE problem, together with the BR-DUE problem as a special case, is formulated as a variational inequality. We provide a very general existence result for VT-BR-DUE and BR-DUE that relies on assumptions weaker than those required for mere DUE models. Moreover, a characterization of the solution set is provided based on rigorous topological analysis. Finally, three computational algorithms are proposed based on the VI and DVI formulations. Numerical studies are conducted to assess the proposed algorithms in terms of solution quality, convergence, and computational efficiency.
\end{abstract}

\begin{keyword}
%% keywords here, in the form: keyword \sep keyword

%% MSC codes here, in the form: \MSC code \sep code
%% or \MSC[2008] code \sep code (2000 is the default)
dynamic user equilibrium \sep bounded rationality  \sep variable tolerance  \sep variational inequality \sep differential variational inequality  \sep existence \sep computation \sep convergence 
\end{keyword}

\end{frontmatter}

\section{\label{Intro}Introductory remarks}

This paper studies an extension of the {\it simultaneous route-and-departure-time  dynamic user equilibrium} (SRDT DUE)  \citep{Friesz1993}. Namely we incorporate the concept of {\it bounded rationality} (BR) proposed by \cite{Simon1957, Simon1990, Simon1991} for the modeling of travel behavior. As such, BR-DUE models are developed under the assumption that travelers, viewed as Nash agents, do not behave in a completely rational manner.

In the literature of traffic user equilibrium, the modeling of travelers' route and/or departure time choices has been greatly influenced by Wardrop's first principle \citep{Wardrop}, which states that road users behave in a rational way and seek to minimize their own travel times/costs by making route (or departure time) choices. There are multiple means of expressing the dynamic notion of Wardropian user equilibrium, such as  variational inequality \citep{Friesz1993, existence, SL}, differential variational inequality \citep{FBST, FKKR, FHNMY}, and nonlinear complementarity problem \citep{HUD, WTC}. While enjoying a number of canonical mathematical representations, the notion of completely rational user equilibrium is not entirely in line with realistic driving behavior and empirical observations. That is,  travelers may not always choose the departure time and/or route that yield the minimum travel cost. Such behavior may be due to (1) imperfect travel information; and (2) certain ``inertia" in decision-making. Indeed, empirical studies suggest that in reality, drivers do not always follow the least costly route-and-departure-time choice \citep{AP}. Accordingly, one may model such boundedly rational behavior by introducing travelers' tolerances towards the difference in travel costs, and postulating a range of costs acceptable to travelers, rather than the minimum travel cost.

The main subjects of this paper include {\it boundedly rational dynamic user equilibrium} (BR-DUE), which relies on fixed (exogenous) tolerances that often depend on the O-D pair in the literature. We also consider the so-called {\it variable tolerance boundedly rational dynamic user equilibrium} (VT-BR-DUE), in which the tolerances may depend on the path and the established (actual) path departure rates. Clearly, BR-DUE is simply a special case of VT-BR-DUE. The latter concept is relevant in situations where drivers' tolerances may be affected by not only their O-D pairs (or more generally, their static attributes such as socio-economic status) but also by a number of external factors. For example, these tolerances may vary by path, depending on road quality, travel distance, scenic quality \citep{BenAkiva}, and personal familiarity \citep{Bonsall}, all of which are associated with a given link or path. In addition, the user tolerances may depend on prevailing traffic conditions \citep{Ueberschaer, Huchingson}, which are completely determined by the actual path departure rates. However, it is important for us to clarify that whether and how the tolerances should depend on all the aforementioned factors is not the focus here. Rather, this paper is meant to provide the most general modeling framework that can accommodate all these factors if and when they become relevant. According to the literature review presented below, the notion of bounded rationality has received much attention in (dynamic) traffic assignment; and our current paper makes a significant contribution to this area by providing the first complete theory of BR-DUE and VT-BR-DUE.

\subsection{Some literature on bounded rationality}

As a relaxation of the perfect rationality assumption commonly made in Nash games, the notion of bounded rationality is first proposed by \cite{Simon1957, Simon1990, Simon1991} and introduced to traffic modeling by \cite{MC}. In prose, the notion of bounded rationality postulates a range of acceptable travel costs that, when achieved, do not incentivize travelers to change their departure times or route choices.  Such a range is phrased by \cite{MC} as ``indifference band". The width of such a band, usually denoted by $\varepsilon$, is either derived through a behavioral study of road users (for example, by surveys) or calibrated from empirical observation through inverse modeling techniques.  In general, $\varepsilon$ could depend on a specific origin-destination pair and/or travel commodity. Since \cite{MC}, boundedly rational user equilibrium (BR-UE) has received considerable attention in {\it static traffic assignment} (STA), with an incomplete list of research papers including \cite{DLPB,  GC, HT, KA, LYL} and \cite{Marsden}. It is also investigated using simulation-based approaches in the venue of dynamic modeling \citep{HM1997, MJ1991, ML1999, MZL}.

The notion of bounded rationality (BR) was used somewhat imprecisely during the early days of dynamic traffic assignment research. In particular, BR was studied in a so-called laboratory setting by \cite{MC}, without a mathematical articulation of BR for dynamic traffic assignment. BR was used in a similarly fashion for simulations by \cite{JM, PM, ML1999}; and \cite{CM}, which efforts were again limited by the lack of a complete mathematical model of BR for DUE. Recognizing the lack of a theory of traffic assignment that directly incorporates BR, \cite{Ridwan} tried to apply the theory of fuzzy systems to the study of BR. \cite{Bogers}, again driven by the lack of a suitable theory, conducted more laboratory studies of BR.  \cite{Szeto2003} and \cite{SL2006} propose a mathematical model for \emph{route-choice} (RC) boundedly rational dynamic user equilibrium (RC BR-DUE). The RC BR-DUE is  formulated as a discrete-time nonlinear complementarity problem in \cite{SL2006}, where a heuristic route-swapping algorithm is proposed to solve the problem. \cite{GZ} consider RC BR-DUE with endogenously determined tolerances by allowing the width of the indifference band $\varepsilon$ to depend on time and the actual path departure rates. However, no problem formulation, solution existence or computational method are provided in that paper. Contributions by \cite{Szeto2003, SL2006} and \cite{GZ} achieve enhanced (yet partial) integration of BR and DUE but do not establish and analyze a complete theory, where by ``complete" we mean a mathematical formulation consistent with known empirical results and surmised behaviors; qualitative properties; and a computational approach that is demonstrably effective.

To the best of our knowledge, there has not been an analytical treatment of the BR-DUE problem with exogenous or variable (endogenous) tolerances in the literature, in terms of formulation, qualitative analyses, and computation. This paper bridges this gap by establishing the first complete analytical framework capable of formulating BR-DUE problems into familiar mathematical forms (such as variational inequalities), analyzing solution existence, characterizing the solution set, and computing solutions with convergent algorithms. These specific contributions are elaborated below.

\subsection{Contribution made in this paper}

This paper makes contributions in four major areas:

\begin{enumerate}
\item problem formulation;

\item existence theory;

\item characterization of solution set; and

\item algorithm development and assessment.
\end{enumerate}

\subsubsection{Problem formulation}
Study of the BR-DUE problems is most facilitated by formulating them into a canonical mathematical form so that existing analytical and computational tools may apply. This paper proposes, for the first time in the literature, three equivalent mathematical formulations of the SRDT BR-DUE problem with exogenously given tolerances, namely a variational inequality (VI), a differential variational inequality (DVI), and a fixed-point problem (FPP). Remarkably, the same formulations exist for the VT-BR-DUE problem, wherein the tolerances depend endogenously on the path departure rates; this will be shown in our paper as well \footnote{Since the VT-BR-DUE problem subsumes BR-DUE problem as a special case, throughout this paper we will establish results for VT-BR-DUE, while it is understood that the same results apply to BR-DUE automatically.}.  The three equivalent formulations proposed by this paper are demonstrably beneficial to the modeling, analysis, and computation of BR-DUE and VT-BR-DUE. A key innovation that is crucial to these formulations is the introduction of a new operator that simultaneously encapsulates the network performance model (i.e. dynamic network loading) and the endogenous user tolerance. As we will show in Section \ref{secpropoperator}, this new operator is well defined and continuous. It plays a key role in the existence theory, solution characterization, and computation of the problem.

\subsubsection{Solution existence}

Relative to existence of boundedly rational dynamic user equilibria, we
provide an existence theory that rests on new proofs of the existence and
continuity of the newly introduced operators. Our existence theory makes use of the unique mathematical structure of the VT-BR-DUE problems and properties of the
effective delay operator. To our knowledge, this is the first existence result for boundedly rational dynamic user equilibrium, and it applies equally to RC BR-DUE, SRDT BR-DUE and SRDT VT-BR-DUE.  In particular, our existence result for VT-BR-DUE relies on conditions weaker than those employed for normal DUEs \citep{existence} \footnote{The existence of SRDT DUE immediately implies the existence of a VT-BR-DUE or a BR-DUE. Thus a non-trivial discussion of the existence of boundedly rational dynamic user equilibria should rely on conditions weaker than those for DUEs.}. In fact, we will articulate in Section \ref{subsecDUEnotexist} conditions under which the DUE does not exist  or the existence proof is difficult. We then show that VT-BR-DUEs exist under these same conditions, thereby showing that the existence of VT-BR-DUE or BR-DUE is indeed more general than normal DUEs.

The existence proof provided in this paper utilizes the VI representation of SRDT VT-BR-DUE, developed in the same paper. Our existence result does not invoke the {\it a priori} boundedness on the path departure rates. Notice that the issue  of {\it a priori} boundedness is less of a concern for route-choice DUEs by virtue of problem formulation; but it will impact the existence proof for SRDT DUEs due to  compactness in an infinite-dimensional space. This subtle point will be demonstrated in detail in Section \ref{secexistence}. Thus, having established solution existence in the most general setting, we offer an existence theory that subsumes RC BR-DUE, SRDT BR-DUE and SRDT VT-BR-DUE.

\subsubsection{Characterization of the solution set}

This paper provides a characterization of the VT-BR-DUE solution set by establishing rigorous results on the non-uniqueness and behavior of VT-BR-DUE solutions. In particular, we show the compactness of the solution set,
analyze its interior points, and illustrate a procedure to find infinitely
many solutions around a given solution, which leads to the construction of a
connected component of the solution set.

Our characterization constitutes the first result on the non-uniqueness of VT-BR-DUE solutions. In particular, by working within a discrete-time setting, we first show that the solution set, which is nonempty by virtual of the existence theory, is
closed and compact; this is achieved by using the VI formulation. We then
analyze the interior points of the solution set with respect to various
subset topologies; the result gives us a sense of how dense the solution set
is in a very high-dimensional space. Moreover, we construct a precise
procedure to find, in the neighborhood of any VT-BR-DUE solution, a convex
set consisting of infinitely many other solutions. Notably, such a procedure
is purely analytical and does not resort to numerical computations. Last but not least, we utilize such a procedure to construct a connected and possibly nonconvex subset of the solution set. The characterization of the solution set presented in this paper is nontrivial, resting on topological analysis. Moreover, this and other results of similar kind have not been reported previously, to our knowledge, neither in a static nor a dynamic context, and they address important properties of the solution set related to compactness, infiniteness, convexity, and connectedness.

\subsubsection{Computation}

On the computational side, prior to this paper there has not been any algorithm proposed in the literature for computing VT-BR-DUE or BR-DUE, when both route and departure time choices are within the purview of drivers. The formulation of VT-BR-DUE put forward in this paper has a significant impact on the computation of these problems, given many existing computational algorithms for variational inequalities (VIs), differential variational inequalities (DVIs) and fixed-point problems (FPP) in the literature. The proposed VI formulation for BR-DUE, when properly discretized, admits a number of existing solution algorithms \citep{HL, HL2002, SL, Ukkusuri}. On the other hand, the computation of a continuous-time VT-BR-DUE is most facilitated by the mathematical paradigm of DVI \citep{PS} and emerging computational algorithms associated therein.

As a demonstration of the computational significance of the proposed formulations, three computational algorithms, based on the VI and DVI formalisms, are proposed and tested on several networks. The first one is a fixed-point method; the second one is a self-adaptive projection method adapted from \cite{HL2002}; and the third one is a proximal point method \citep{Konnov}. These three algorithms enjoy rigorous convergence results given certain notions of monotonicity of the operator; namely strong monotonicity (fixed-point method),  pseudo monotonicity (self-adaptive projection), and quasi monotonicity (proximal point method). Our paper's intent is to: (i) document how far the available mathematics can take us in assuring convergence, and (ii) illustrate what can be done computationally when proceeding heuristically by relaxing monotonicity assumptions needed to assure
convergence. As we demonstrate in the numerical studies, all three algorithms show empirical convergence even though the underlying monotonicity properties are not verified against the network performance model.

In summary, contributions made by this paper include:
\begin{enumerate}
\item  an expression of the simultaneous route-and-departure-time (SRDT) dynamic user equilibrium with bounded rationality (BR-DUE) or with variable tolerance (VT-BR-DUE) as  variational inequalities,  differential variational inequalities, and fixed-point problems;

\item a major existence theory for the VT-BR-DUE, which relies on conditions considerably weaker than those employed for DUEs; 

\item a characterization of the compact and infinite nature of the solution set, and a procedure to find an infinite and connected solution set without numerical computations of VT-BR-DUE solutions; and 

\item three computational algorithms for computing SRDT VT-BR-DUEs with provable convergence results, which are tested on several networks in terms of solution characteristics, convergence, and computational times.
\end{enumerate}

It should be noted that although this paper considers {\it simultaneous route-and-departure-time} DUE problems incorporating bounded user rationality, the proposed analytical framework is easily transferrable to route-choice BR-DUE and even to static BR-UE (boundedly rational user equilibrium) problems. This will be demonstrated in a future publication.

The rest of this paper is organized as follows. Section \ref{secNEB} introduces basic notations and background materials necessary for the presentation of subsequent analyses. We will also articulate the definitions of SRDT DUE, BR-DUE, and VT-BR-DUE. In Section \ref{secbrdue} we show the VI formulation of the SRDT VT-BR-DUE, by introducing a new operator that simultaneously encapsulates the network loading model and the (endogenous) user tolerances. Such an operator will be analyzed in depth in Section \ref{secpropoperator}, in terms of its well-definedness and continuity. In Section \ref{secexistence}, we provide an existence theory for SRDT VT-BR-DUE problems. Section \ref{secsolutionchara} is concerned with the characterization of the solution set for VT-BR-DUE problems. Three computational algorithms will be presented in Section \ref{seccomputation}, together with their convergence results based on various notions of generalized monotonicity.  Several numerical studies of the proposed computational algorithms will be presented in Section \ref{secnumerical}. Finally, Section \ref{secconclusion} offers some concluding remarks.

\section{\label{secNEB}Notation, essential background, and definition}

Throughout this paper, the time interval of analysis is a single commuting period or ``day" expressed as $[t_{0},\, t_{f}] \subset \mathbb{R}$ where $\mathbb{R}$ denotes the set of real numbers. We let $\mathcal{P}$ be the set of all paths employed by travelers. For each path $p\in\mathcal{P}$ we define the path departure rate which is a function of departure time $t\in[t_0,\,t_f]$ \footnote{Throughout this paper, we employ the convention that expresses a function/functional/operator $g$ of certain argument to be $g(\cdot)$, and expresses the image of a particular point $x$ to be $g(x)$.}:
$$
h_p(\cdot):~[t_0,\,t_f]~\rightarrow~\mathbb{R}_+
$$ 
where $\mathbb{R}_+$ denotes the set of non-negative real numbers. Each path departure rate $h_p(\cdot)$ is measured at the entrance of the first arc of the relevant path; and the unit for the path departure rates is {\it vehicles per unit time}.  We next define $h(\cdot)=\{h_p(\cdot): p\in\mathcal{P}\}$ to be a vector of departure rates (path flows). Therefore, $h=h(\cdot)$ can be viewed as a vector-valued function of $t$, the departure time \footnote{For notation convenience and without causing any confusion, we will sometimes use $h$ instead of $h(\cdot)$ to denote the path departure rate vectors.}. 

We denote the space of square-integrable functions on the time horizon $[t_0,\,t_f]$ by $L^2[t_0,\,t_f]$, which is a Hilbert space. Its subset consisting of non-negative functions is denoted $L_+^2[t_0,\,t_f]$.  We stipulate that each path departure rate is square integrable; that is
$$
h_p(\cdot)\in L_+^2[t_0,\,t_f],\qquad \qquad h(\cdot)\in\big(L_+^2[t_0,\,t_f]\big)^{|\mathcal{P}|}
$$
where $\big(L_+^2[t_0,\,t_f]\big)^{|\mathcal{P}|}$ is a subset of the $|\mathcal{P}|$-fold product space $\big(L^2[t_0,\,t_f]\big)^{|\mathcal{P}|}$ consisting of non-negative vector-valued functions. The inner product on the Hilbert space $\big(L^2[t_0,\,t_f]\big)^{|\mathcal{P}|}$ is defined as
\begin{equation}\label{ipdef}
\left<u,\, v\right>~=~\int_{t_0}^{t_f}\left(u(s)\right)^T\,v(s)\,ds~=~\sum_{i=1}^{|\mathcal{P}|}\int_{t_0}^{t_f} u_i(s) v_i(s)\,ds
\end{equation}
where the superscript $T$ denotes the transpose of vectors. Moreover, the norm 
\begin{equation}\label{l2normdef}
\left\|u\right\|_{L^2}~=~\left<u,\,u\right>^{1/2}
\end{equation}
is induced by the inner product \eqref{ipdef}.

Here, as in all
DUE modeling, the single most crucial ingredient is the path delay operator, 
which maps a given vector of departure rates $h$ to a vector of path travel times. More specifically, we let
\begin{equation*}
D_{p}(t,\,h)\qquad \forall t\in[t_0,\,t_f],\quad  \forall p\in \mathcal{P}
\end{equation*}
be the path travel time of a driver departing at time $t$ and following path $p$, given the departure rates $h$ associated with all the paths in the network. We then define the path delay operator $D(\cdot)$ by letting $D(h)=\{D_p(\cdot,\,h):\,p\in\mathcal{P}\}$, which is a vector of time-dependent path travel times.  $D$ is an operator defined on $\big(L_+^2[t_0,\,t_f]\big)^{|\mathcal{P}|}$, which maps a vector of path departure rates $h(\cdot)$ to the vector of path travel times $\{D_p(\cdot,\,h):\,p\in\mathcal{P}\}$. Mathematically, we have 
\begin{equation}\label{Ddef}
D:~\big(L^2_+[t_0,\,t_f]\big)^{|\mathcal{P}|}~\rightarrow~\big(L^2_+[t_0,\,t_f]\big)^{|\mathcal{P}|},\qquad h(\cdot)=\{h_p(\cdot),~p\in\mathcal{P}\}~\mapsto~D(h)=\{D_p(\cdot,\,h),~p\in\mathcal{P}\}
\end{equation}
\noindent The {\it effective path delay operator}, $\Psi$, is similarly defined by including arrival penalties in addition to travel times. As such, the effective path delay is a more general notion of ``travel cost" than path delay. The effective delay operator is defined as follows. 
\begin{equation}\label{rvPsidef}
\Psi:~\big(L^2_+[t_0,\,t_f]\big)^{|\mathcal{P}|}~\rightarrow~\big(L^2_+[t_0,\,t_f]\big)^{|\mathcal{P}|},\qquad h(\cdot)=\{h_p(\cdot),~p\in\mathcal{P}\}~\mapsto~\Psi(h)=\{\Psi_p(\cdot,\,h),~p\in\mathcal{P}\}
\end{equation}
\noindent where
\begin{equation}\label{cost}
\Psi _{p}(t,\,h)~=~D_{p}(t,\,h)+f\big( t+D_{p}(t,h)-T_{A}\big) \qquad \forall t\in[t_0,\,t_f],\quad \forall p\in \mathcal{P}
\end{equation}
$T_{A}$ is the desired arrival time; and the term $f\big( t+D_{p}(t,h)-T_{A}\big)$ assesses a non-negative arrival penalty whenever
\begin{equation}
t+D_{p}(t,\,h)~\neq~T_{A}  \label{pgt}
\end{equation}
as $t+D_{p}(t,\,h)$ is the clock time at which departing traffic arrives at
the destination of path $p\in \mathcal{P}$. Note that $T_A$ is often assumed to depend on the origin-destination pair.

\begin{remark}
Our formulation requires minimal assumptions on the functional form of the penalty function $f(\cdot)$, and can thus accommodate very general arrival time windows, arrival preferences, and special circumstances. For example, if early arrivals are encouraged, then the function $f(\cdot)$ can be selected such that $f(s)$ is increasing for $s<0$. In general, however, the function $f(\cdot)$ needs to be carefully calibrated using behavioral assumptions and empirical data. 
\end{remark}

We interpret $\Psi_p(t,\,h)$ as the {\it perceived} travel cost of drivers departing at time $t$ following path $p$, given the vector of path departure rates $h$.  We stipulate that each path effective delay
\begin{equation*}
\Psi _{p}(\cdot,\,h):~ [t_{0},\,t_{f}]~ \longrightarrow~ \mathbb{R} _{++}\qquad \forall p\in \mathcal{P}
\end{equation*}
is measurable, positive, and square integrable, where $\mathbb{R}_{++}$ denotes the set of positive real numbers. The notion of positive functions, as we employ throughout this paper, refers to measurable functions that are positive {\it almost everywhere}.  The notation
\begin{equation*}
\Psi(h)~\doteq~\{\Psi _{p}(\cdot ,h):~p\in \mathcal{P}\} \in \big(L_+^2[t_0,\,t_f]\big)^{|\mathcal{P}|}
\end{equation*}
is used to express the complete vector of effective delays.

The (effective) path delay operator is a key component of analytical dynamic user equilibrium (DUE) models. It is usually not available in closed form and has to be numerically evaluated from dynamic network loading (DNL), which is a subproblem of a complete DUE model. The DNL sub-problem aims at describing and predicting the spatial-temporal evolution of traffic flows on a network that is consistent with established route and departure time choices of travelers, by introducing appropriate dynamics to flow propagation, flow conservation, and travel delays on a network level.  Any DNL must be consistent with the established path departure rates and link delay model, and is usually performed under the {\it first-in-first-out} (FIFO) rule. A few link flow models commonly employed for the DNL procedure include the link delay model \citep{Friesz1993}, the Vickrey model \citep{GVM1, GVM2}, the cell transmission model \citep{CTM1, CTM2}, the link transmission model \citep{LTM, LKWM}, and the Lighthill-Whitham-Richards model \citep{LW, Richards}.

Studies of the dynamic network loading models date back to the 1990's with a significant number of publications. \citep{FKKR, FHLY, FHNMY, Handissertation,  LS, NZ2010, Szeto2003, Szeto2011, SL, SL2006, Ukkusuri}. Notably, despite the absence of closed-form representations of the delay operators, it has been reported that certain dynamic network loading models can be explicitly expressed as a system of {\it differential algebraic equations} (DAEs) or {\it partial differential algebraic equations} (PDAEs). Those results include: the DAE system formulation of the DNL procedure for the link delay model \citep{FKKR}; the DAE system formulation of the DNL procedure for the Vickrey model \citep{Handissertation}; the DAE system formulation of the DNL procedure for the LWR-Lax model \citep{FHNMY}; and the PDAE system formulation of the LWR model incorporating spillback \citep{LWRcont}.

\subsection{Simultaneous route-and-departure choice dynamic user equilibrium (SRDT DUE)}\label{subsecSRDTDUE}
We recap the notion of SRDT DUE originally articulated by \cite{Friesz1993}.  This DUE problem is based on a fixed trip matrix $\big(Q_{ij}: (i,\,j)\in\mathcal{W}\big)$, where each $Q_{ij}\in \mathbb{R} _{++}$ is the fixed travel demand between origin-destination (O-D) pair $\left( i,j\right) \in \mathcal{W}$, and $\mathcal{W}$ is the set of O-D pairs. Note that in the context of simultaneous route and departure time choices, $Q_{ij}$ represents traffic volume, not flow \footnote{This is in contrast to route-choice DUEs where the demand for each O-D pair is specified as a time-varying departure rate, i.e. in the form of flow, not volume.}. Finally we let $\mathcal{P}_{ij}\subset \mathcal{P}$  be the set of paths connecting O-D pair $\left( i,j\right) \in \mathcal{W}$.

The demand satisfaction constraint for the path departure rates $h$ is written as
\begin{equation}\label{cons}
\sum_{p\in \mathcal{P}_{ij}}\int_{t_0}^{t_f}h_p(t)\,dt~=~Q_{ij}\qquad\forall (i,\,j)\in\mathcal{W}
\end{equation}
\noindent With the notation and concepts we have thus far introduced, the feasible region for the DUE problem is
\begin{equation}\label{chapVI:lambda}
\Lambda~\doteq~\left\{ h(\cdot)\geq 0:\sum_{p\in \mathcal{P}_{ij}}
\int_{t_{0}}^{t_{f}}h_{p}\left( t\right) dt=Q_{ij}\quad \forall
\left( i,j\right) \in \mathcal{W}\right\} \subseteq \left( L_{+}^{2}[t_{0},\,t_{f}] \right) ^{\left\vert \mathcal{P}\right\vert }
\end{equation}

We now define the minimum travel cost within a given O-D pair. In order to do this in a continuous-time context, we require the measure-theoretic analog of the infimum of a set of numbers.  In particular, for  any measurable function $g: ~[t_0,\,t_f]\rightarrow \mathbb{R}$, the {\it essential infimum} of $g(\cdot)$ on $[t_0,\,t_f]$ is given by
\begin{equation}\label{chapVI:essinf}
\underset{s\in[t_0,\,t_f]}{\hbox{essinf}}\left\{g(s)\right\}~\doteq~\sup\left\{ x\in\mathbb{R}:~meas\{s\in [t_0,\,t_f]:~g(s)<x\}~=~0\right\}
\end{equation}
\noindent In other words, the essential infimum is the minimum value of $g(\cdot)$ over all $t$ except a set with zero measure. The essential infimum of the effective travel delays, or simply the minimum travel cost, is expressed as:
\begin{equation}\label{essinfdef1}
v_p(h)~\doteq~\underset{t\in[t_0,\,t_f]}{\hbox{essinf}}\left\{\Psi_p(t,\,h)\right\}~>~0\qquad\forall p\in\mathcal{P}
\end{equation}
\begin{equation}\label{essinfdef2}
v_{ij}(h)~\doteq~\min_{p\in\mathcal{P}_{ij}}\left\{v_p(h)\right\} \qquad \forall \left(i,\,j\right) \in \mathcal{W}
\end{equation}

 The following definition of dynamic user equilibrium is first articulated by \cite{Friesz1993}:
\begin{definition}\label{duedef}
{\bf (SRDT dynamic user equilibrium)} A vector of departure rates $h^{\ast }\in \Lambda$ is a dynamic user equilibrium with simultaneous route and departure time choices if
\begin{equation}\label{chapVI:comp}
h_{p}^{\ast }\left( t\right) >0,~~p\in \mathcal{P}_{ij}~~\Longrightarrow~~ \Psi _{p}(t,\,h^{\ast }) ~=~v_{ij}(h^*)\qquad \hbox{for almost every}~ t\in[t_0,\,t_f]
\end{equation}
We denote this equilibrium by $DUE\big(\Psi,\,\Lambda,\, [t_{0},t_{f}] \big)$.
\end{definition}
\noindent In prose, \eqref{chapVI:comp} means that, within a given O-D pair $(i,\,j)$, if the departure rate along path $p$ at time $t$ is positive, then the corresponding travel cost is equal to the minimum travel cost within this O-D.

 Using measure-theoretic arguments, \cite{Friesz1993} establish that
a dynamic user equilibrium is equivalent to the following variational
inequality under suitable regularity conditions:
\begin{equation} \label{duevi}
\left. 
\begin{array}{c}
\text{find }h^{\ast }\in \Lambda\text{ such that} \\ 
\displaystyle \sum_{p\in \mathcal{P}}\displaystyle \int\nolimits_{t_{0}}^{t_{f}}\Psi _{p}(t,h^{\ast
})(h_{p}(t)-h_{p}^{\ast }(t))dt\geq 0 \\ 
\forall h\in \Lambda
\end{array}%
\right\} VI \big(\Psi, \,\Lambda,\,[t_{0},\, t_{f}] \big) 
\end{equation}
The above variational inequality can be re-written in a more compact form by using the notation introduced in \eqref{ipdef}: 
$$
\left<\Psi(h^*)~,~h-h^*\right>~\geq~0\qquad\forall h\in\Lambda
$$

\subsection{Boundedly rational dynamic user equilibrium (BR-DUE)}

The notion of bounded rationality (BR) is a relaxation of Wardrop's first principle \citep{Wardrop}. The Wardropian principle incorporating the simultaneous route-and-departure-time notion requires identical travel costs among all utilized routes and departure time choices between an origin-destination pair; see \eqref{chapVI:comp}. The boundedly rational dynamic user equilibrium (BR-DUE), on the other hand, requires that the experienced travel costs are within the interval $[v_{ij}(h^*),\,v_{ij}(h^*)+\varepsilon_{ij}]$, where $v_{ij}(h^*)$ denotes the minimum travel cost between O-D pair $(i,\,j)$. The fixed constant $\varepsilon_{ij}\in \mathbb{R}_{+}$ specifies the range of acceptable differences in the travel costs experienced by travelers between O-D pair $(i,\,j)$.

Following closely the definition of DUE \eqref{chapVI:comp}, we present the definition of BR-DUE with route and departure time choices.

\begin{definition}\label{brduedef}{\bf (SRDT BR-DUE)} Given the vector of tolerances $\varepsilon=\big(\varepsilon_{ij}:~(i,\,j)\in\mathcal{W}\big)\in \mathbb{R}_{+}^{|\mathcal{W}|}$, a vector of departure rates $h^*\in\Lambda$ is a boundedly rational dynamic user equilibrium associated with $\varepsilon$\, if for all $(i,\,j)\in\mathcal{W}$,
\begin{equation}\label{brduedef1}
h_p^*(t)~>~0,~~ p\in\mathcal{P}_{ij}~~\Longrightarrow~~ \Psi_p(t,\,h^*)\in [v_{ij}(h^*),\,v_{ij}(h^*)+\varepsilon_{ij}]\qquad \hbox{for almost every} ~t\in[t_0,\,t_f]
\end{equation}
where $v_{ij}(h^*)$ is the essential infimum of the effective path delays between origin-destination pair $(i,\,j)$, and is defined in \eqref{essinfdef1}-\eqref{essinfdef2}. We denote this equilibrium by {\it BR-DUE}$\big(\Psi,\,\varepsilon,\,\Lambda,\, [t_{0},\,t_{f}] \big)$.
\end{definition}

The tolerances in the definition of BR-DUE are given exogenously. They can usually be estimated through some static attributes of travelers between an O-D pair, such as age, gender, and socio-economic status.

\subsection{Variable tolerance boundedly rational dynamic user equilibrium (VT-BR-DUE)}

As we explained earlier in the introduction, the tolerances may sometimes depend on endogenous variables such as the departure rate vector. Further extension can be made regarding their dependencies on the path $p$. Specifically, we allow the tolerances to depend on path $p$ and path departure rates $h$, by introducing 
$$
\varepsilon_{ij}^p(h)\in\mathbb{R}_+,\quad \forall p\in\mathcal{P}_{ij},\quad \forall (i,\,j)\in\mathcal{W},\quad\forall h\in\Lambda,
$$
\noindent and letting $\varepsilon(h)\doteq\left(\varepsilon_{ij}^p(h):\,p\in\mathcal{P}_{ij},\,(i,\,j)\in\mathcal{W}\right)$ be the concatenation of such tolerances. Note that $\varepsilon(\cdot)$ is viewed as a mapping from $\Lambda$, the set of feasible path departure rates, to $\mathbb{R}_+^{|\mathcal{P}|}$.

\begin{definition}\label{vtbrduedef}{\bf (SRDT VT-BR-DUE)}
Given the vector of variable tolerances $\varepsilon(\cdot): \Lambda\rightarrow\mathbb{R}_+^{|\mathcal{P}|}$, a vector of departure rates $h^*\in\Lambda$ is a VT-BR-DUE associated with $\varepsilon(\cdot)$ if, for all $(i,\,j)\in\mathcal{W}$,
\begin{equation}\label{vtbrduedef1}
h_p^*(t)~>~0,~~ p\in\mathcal{P}_{ij}~~\Longrightarrow~~ \Psi_p(t,\,h^*)\in [v_{ij}(h^*),\,v_{ij}(h^*)+\varepsilon_{ij}^p(h^*)]\qquad \hbox{for almost every} ~t\in[t_0,\,t_f]
\end{equation}
where $v_{ij}(h^*)$ is the essential infimum of the effective path delays between origin-destination pair $(i,\,j)$, and is defined in \eqref{essinfdef1}-\eqref{essinfdef2}. We denote this equilibrium by {\it VT-BR-DUE}$\big(\Psi,\,\varepsilon,\,\Lambda,\, [t_{0},\,t_{f}] \big)$.
\end{definition}

\begin{remark}\label{rmk1}
It can be easily seen that the BR-DUE is just one special case of the VT-BR-DUE problem, in which the dependence of $\varepsilon_{ij}^p(h^*)$ on $p$ and $h^*$ are dropped. Thus, to simultaneously analyze both models  using the proposed methodological framework, it suffices for us to treat the VT-BR-DUE only, and the established results will automatically hold for the BR-DUE problem.
\end{remark}

\section{The VI formulation for the SRDT VT-BR-DUE and BR-DUE problems}\label{secbrdue}

In this section, we present the infinite-dimensional variational inequality (VI) formulation  for the BR-DUE and VT-BR-DUE problems, when both route choice and departure time choice are considered. These original formulations will benefit the  analyses presented later on the existence and computation of these two models. For the reason stated in Remark \ref{rmk1}, we will state and prove the VI formulation for the VT-BR-DUE problem, followed by a corollary that expresses the VI formulation for the BR-DUE problem.

Essential to the variational inequality formulation of the VT-BR-DUE problem is the following new operator: 
\begin{equation}\label{Phidef}
\Phi^{\varepsilon}: ~ \Lambda~\rightarrow~ \big(L_+^2[t_0,\,t_f]\big)^{|\mathcal{P}|},\qquad h~\mapsto~\big(\Phi_p^{\varepsilon}(\cdot,\,h):~p\in\mathcal{P}\big)
\end{equation}
where 
\begin{equation}\label{Phidef1}
\Phi^{\varepsilon}_p(t,\,h)~=~\max\left\{\Psi_p(t,\,h),~ v_{ij}(h)+\varepsilon_{ij}^p(h)\right\}-\left(\varepsilon_{ij}^p(h)-\min_{q\in\mathcal{P}_{ij}}\left\{\varepsilon_{ij}^q(h)\right\}\right)\qquad \forall p\in\mathcal{P}_{ij}
\end{equation}

\noindent Given any vector of path departure rates $h\in\Lambda$, by performing the dynamic network loading procedure, one obtains the effective path delays $\Psi_p(t,\,h), \forall t\in[t_0,\,t_f],\,\forall p\in\mathcal{P}$. Thus the essential infimum $v_{ij}(h)$ can be determined through definition \eqref{essinfdef1}-\eqref{essinfdef2}. Moreover, the path-specific variable tolerances $\varepsilon_{ij}^p(h)$ can be also determined with the given $h$.  Consequently, one can readily construct the quantities $\Phi^{\varepsilon}_p(t,\,h),\,\forall t\in[t_0,\,t_f],\,\forall p\in\mathcal{P}$ according to \eqref{Phidef1}. Thus, the operator $\Phi^{\varepsilon}$ stated above can indeed be viewed as a mapping from $\Lambda$ into $\big(L_+^2[t_0,\,t_f]\big)^{|\mathcal{P}|}$. We indicate the dependence of such an operator on the variable tolerances $\varepsilon(\cdot)$ by a superscript. More rigorous analysis regarding the existence, well-definedness, and continuity of this new operator will be presented in Section \ref{secpropoperator}.

Theorem \ref{vtbrduevithm} below casts the VT-BR-DUE problem as an infinite-dimensional variational inequality, in the most generic form. In comparison with the VI formulation \eqref{duevi} of DUE, this new variational inequality relies on the new principal operator $\Phi^{\varepsilon}$, which encapsulates both the DNL procedure and the variable tolerances. Notably, the such a canonical formulation allows known methodologies regarding variational inequalities to be directly applied to VT-BR-DUE problems.

\begin{theorem}\label{vtbrduevithm}{\bf (VT-BR-DUE equivalent to a variational inequality)} 
Given $\varepsilon(\cdot): \Lambda\rightarrow\mathbb{R}_+^{|\mathcal{P}|}$, define $\Phi^{\varepsilon}(\cdot)$ according to \eqref{Phidef}-\eqref{Phidef1}. Then, a vector of path departure rates $h^*\in\Lambda$ is a VT-BR-DUE solution if and only if it solves the following variational inequality.
\begin{equation}\label{newvtbrduevi}
\left.
\begin{array}{c}
\hbox{find}~h^*\in\Lambda~~\hbox{such that}
\\
\displaystyle \sum_{p\in\mathcal{P}}\int_{t_0}^{t_f}\Phi^{\varepsilon}_p(t,\,h^*)(h_p(t)-h_p^*(t))\,dt~\geq~0
\\
\forall~h\in\Lambda
\end{array}
\right\}VI\big(\Phi^{\varepsilon},\,\Lambda,\,[t_0,\,t_f]\big)
\end{equation}
\end{theorem}
\begin{proof}
The proof is postponed until  \ref{secapp1}.
\end{proof}

As a special case of Theorem \ref{vtbrduevithm}, we present the VI formulation for the BR-DUE problem with exogenously given tolerances $\varepsilon=\big(\varepsilon_{ij}:\,(i,\,j)\in\mathcal{W}\big)\in\mathbb{R}_+^{|\mathcal{W}|}$.

\begin{corollary}\label{brduevilemma}{\bf (BR-DUE equivalent to a variational inequality)}
Given the fixed tolerance vector $\varepsilon=\big(\varepsilon_{ij}:\,(i,\,j)\in\mathcal{W}\big)\in\mathbb{R}_+^{|\mathcal{W}|}$, define 
\begin{equation}\label{smallphidef}
\phi^{\varepsilon}_p(t,\,h)~\doteq~\max\left\{\Psi_p(t,\,h),\,v_{ij}(h)+\varepsilon_{ij}\right\}\qquad \forall p\in\mathcal{P}_{ij},\quad\forall h\in\Lambda
\end{equation}
Then, a path departure rate vector $h^*\in\Lambda$ is a BR-DUE solution if and only if  it solves the following variational inequality
\begin{equation}\label{newbrduevi}
\left.
\begin{array}{c}
\hbox{find}~h^*\in\Lambda~~\hbox{such that}
\\
\displaystyle \sum_{p\in\mathcal{P}}\int_{t_0}^{t_f}\phi^{\varepsilon}_p(t,\,h^*)(h_p(t)-h_p^*(t))\,dt~\geq~0
\\
\forall~h\in\Lambda
\end{array}
\right\}VI\big(\phi^{\varepsilon},\,\Lambda,\,[t_0,\,t_f]\big)
\end{equation}
\end{corollary}

\begin{proof}
In the case with fixed tolerances $\varepsilon_{ij}$ for all $(i,\,j)\in\mathcal{W}$, the operator $\Phi_p^{\varepsilon}$ defined in \eqref{Phidef1} reduces to \eqref{smallphidef}, and \eqref{newvtbrduevi} yields \eqref{newbrduevi}.
\end{proof}

We provide below an intuitive and graphical illustration of the VI formulations for DUE and BR-DUE. For simplicity, we consider just one O-D pair with one path $p$.  Figure \ref{figDUEVI} depicts a DUE solution $h^*_p(\cdot)$ and the corresponding effective path delay $\Psi_p(\cdot, \, h_p^*)$, both as functions of departure time $t$. The VI formulation for DUE is equivalent to 
$$
\int_{t_0}^{t_f}\Psi_p(t,\,h^*) h_p(t)\,dt~\geq~\int_{t_0}^{t_f}\Psi_p(t,\,h^*) h^*_p(t)\,dt\qquad\forall h\in\Lambda
$$
In other words, $h^*$ is the minimizer of 
\begin{equation}\label{minimizeqt1}
\int_{t_0}^{t_f} \Psi_p(t,\,h^*)\,h_p(t)\,dt 
\end{equation}
among all $h_p(\cdot)$ that satisfies $\int_{t_0}^{t_f}h_p(t)\,dt=Q_{ij}$. Therefore, the part where $h^*_p(\cdot)>0$ should be located within the ``flat bottom" of the effective delay curve; see Figure \ref{figDUEVI}. This is consistent with the definition of DUE.

On the other hand, the BR-DUE condition requires that whenever the solution $h^*_p(t)$ is non-zero,  $\Psi_p(t,\,h^*)$ must be within an indifference band $[v_{ij}(h^*),\,v_{ij}(h^*)+\varepsilon_{ij}]$. This means that the positive portion of $h_p^*(\cdot)$ must reside within the time interval $[a,\,b]$ (see Figure \ref{figBRDUEVI}, left). This immediately implies that the graph of $h_p^*(\cdot)$ must reside within the ``flat bottom" of the function $\phi_p^{\varepsilon}(\cdot,\,h^*)$ defined in \eqref{smallphidef}; see the right part of Figure \ref{figBRDUEVI}. This leads to a new VI whose principal operator is $\phi_p^{\varepsilon}(\cdot,\, h^*)$.

The case with the variable tolerances can be interpreted in a similar way, but is more difficult to visualize and is omitted from this paper.

\begin{figure}[h!]
\centering
\includegraphics[width=0.6\textwidth]{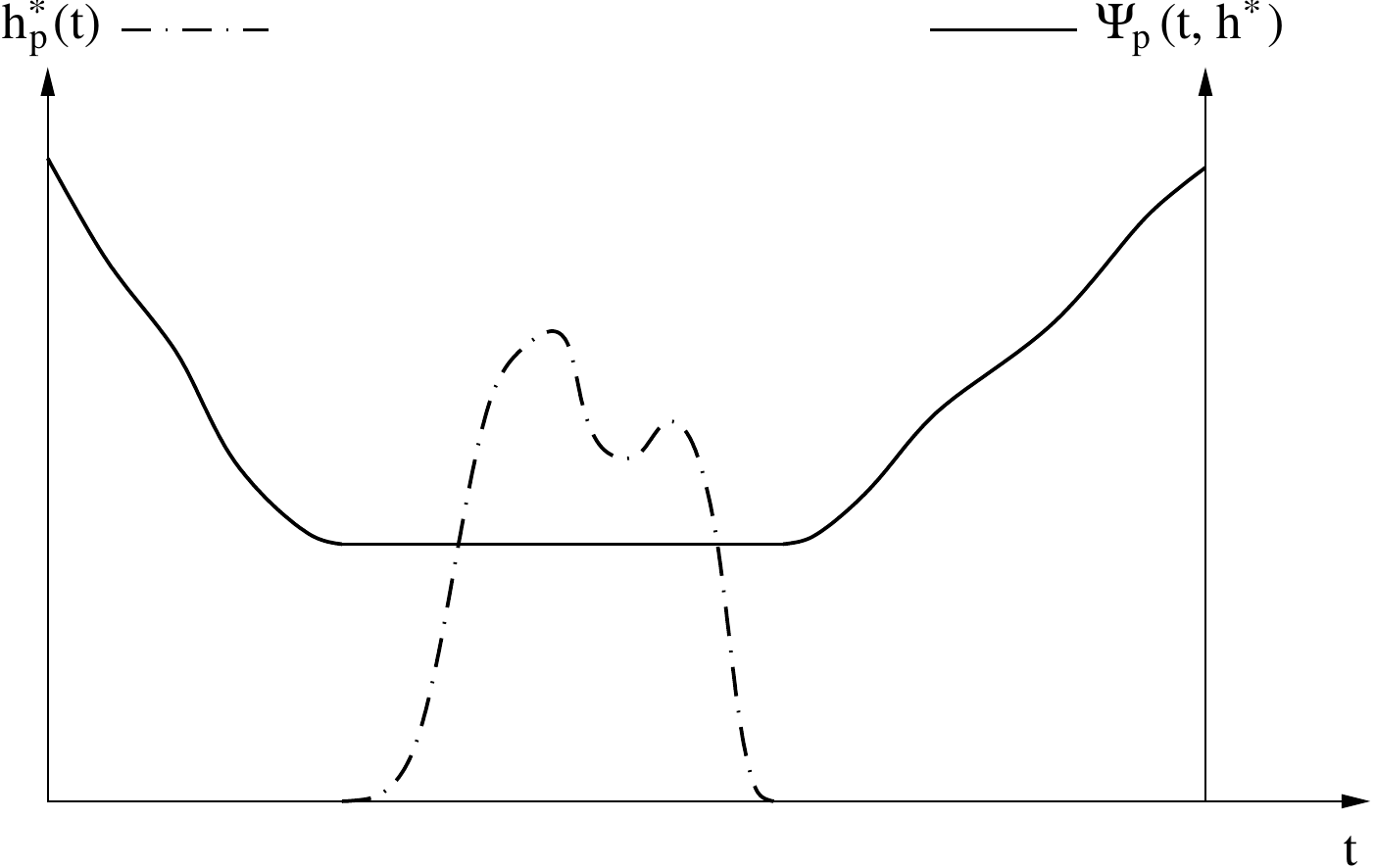}
\caption{\small An illustration of a DUE solution $h_p^*(\cdot)$ and the associated effective path delay $\Psi_p(\cdot,\,h^*)$. In order to minimize the quantity \eqref{minimizeqt1} over all $h_p(\cdot)\in\Lambda$, the equilibrium solution $h^*_p(\cdot)$ must be located at the ``flat bottom" of the effective delay curve. This coincides with the definition of DUE.}
\label{figDUEVI}
\end{figure}

\begin{figure}[h!]
\centering
\includegraphics[width=\textwidth]{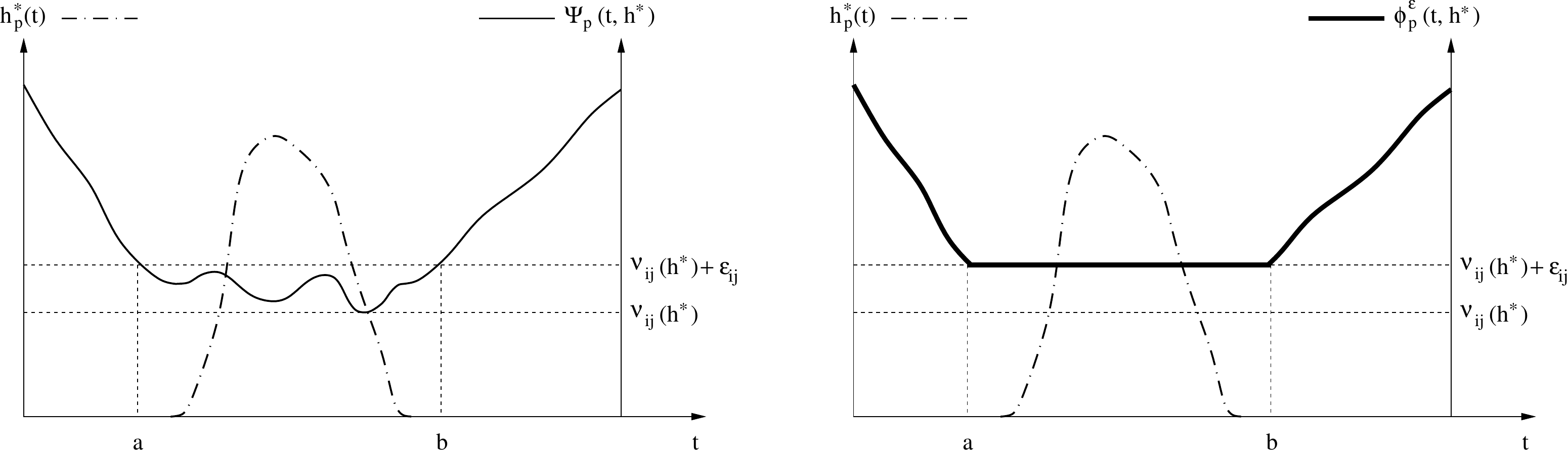}
\caption{\small An illustration of a BR-DUE solution $h_p^*(\cdot)$ and the associated effective path delay $\Psi_p(\cdot,\,h^*)$ (left). To satisfy the BR-DUE condition, $h_p^*(\cdot)$ must reside within the time interval $[a,\,b]$ (left). On the other hand, the function $\phi_p^{\varepsilon}(t,\,h^*)$, defined in \eqref{smallphidef}, is represented by the thick black curve in the figure on the right. Thus, a departure rate $h_p^*(\cdot)$ satisfies the BR-DUE condition if and only if it is located at the ``flat bottom" of the curve $\phi_p^{\varepsilon}(\cdot,\,h^*)$. In view of Figure \ref{figDUEVI}, we see that the BR-DUE condition can be alternatively expressed as a variational inequality with the principal operator $\phi_p^{\varepsilon}(\cdot,\, h^*)$.}
\label{figBRDUEVI}
\end{figure}

\section{Properties of the new operators}\label{secpropoperator}
The proposed variational inequality formulation relies on the new operators $\Phi^{\varepsilon}$ (the VT-BR-DUE case) or $\phi^{\varepsilon}$ (the BR-DUE case). Thus, it is important for us to understand these operators by establishing their qualitative properties required for further analysis and computation. In this section, we will show that the delay operator $\Phi^{\varepsilon}$, given by \eqref{Phidef} and \eqref{Phidef1}, exists, is well defined, and is continuous. Again, these results will automatically hold for $\phi^{\varepsilon}$.

\begin{proposition}\label{propwelldefine}{\bf (Existence and well-definedness of $\Phi^{\varepsilon}$ as an operator)}
Let $\Psi: \Lambda\to \big(L_+^2[t_0,\,t_f]\big)^{|\mathcal{P}|}$ be the effective path delay operator, and $\varepsilon(\cdot)=\big(\varepsilon_{ij}^{p}(\cdot),\,p\in\mathcal{P}_{ij},\,(i,\,j)\in\mathcal{W}\big)$ be the variable tolerance. Assume that the tolerances are uniformly bounded; that is,
\begin{equation}\label{epsilonunib}
0~<~\sup\left\{\varepsilon_{ij}^p(h):~ p\in\mathcal{P}_{ij},~(i,\,j)\in\mathcal{W},\, h\in\Lambda\right\} ~<~+\infty
\end{equation}
Then the operator $\Phi^{\varepsilon}$, given by \eqref{Phidef} and \eqref{Phidef1}, exists and is a well-defined mapping from $\Lambda$ to $\big(L_+^2[t_0,\,t_f]\big)^{|\mathcal{P}|}$.
\end{proposition}
\begin{proof}
The proof is postponed until  \ref{subsecappthmwelldefined}.
\end{proof}

Notice that Proposition \ref{propwelldefine} is based on the well-accepted premise that an effective path delay operator, as defined in \eqref{rvPsidef}-\eqref{cost}, exists. Rigorous existence results for the effective path delay operators are presented by \cite{Friesz1993} for the DNL based on the link delay model; by \cite{existence} for the DNL based on the Vickrey model; by \cite{BH1} and \cite{FHNMY} for the DNL based on the LWR-Lax model; and by \cite{GP} for the DNL based on the classical LWR model.

It remains to show the continuity of $\Phi^{\varepsilon}$, whose proof relies on an alternative expression of the effective delays. This expression is also used in the existence proof provided later in Section \ref{secexistence}. Such an expression is proposed in \cite{existence} but will be recapped here for the completeness of our presentation.

\subsection{Alternative expression of the effective delays}\label{secaltcost}

Let us recall the effective path delay
\begin{equation} \label{ed}
\Psi _{p}(t,\,h)~\doteq ~D_{p}(t,\,h)+f\big(t+D_{p}(t,\,h)-T_{A}%
\big) 
\end{equation}%
\noindent We now re-write \eqref{ed} in a slightly different, yet equivalent, form. Given each O-D pair $(i,\,j)\in \mathcal{W}$, let us
introduce the cost function $\phi _{ij}(\cdot ):[t_{0},\,t_{f}]\rightarrow 
\mathcal{\mathbb{R} }$, which is a function of departure time, and $\,\psi _{ij}(\cdot ):[t_{0},\,t_{f}]\rightarrow \mathcal{\mathbb{R} }$, which is a function of arrival time. Here, the departure time and arrival time are expressed respectively as $t$ and $t+D_p(t,\,h)$. Any driver who departs from the origin at $t$ and arrives at destination at $t+D_p(t,\,h)$ has a travel cost (effective path delay) expressed as 
\begin{equation}\label{moregeneralcost}
\phi_{ij}(t)+\psi_{ij}(t+D_p(t,\,h))
\end{equation}
\noindent where $h\in\Lambda$ is a vector of feasible path departure rates.

The two cost components $\phi_{ij}$ and $\psi_{ij}$ represent the cost for (early) departure and the cost for (late) arrival, respectively. The following mild assumption is made on these two cost functions.\\

\noindent {\bf A0.}  {\it For each $(i,\,j)\in\mathcal{W}$, $\phi _{ij}(\cdot )$ and $\psi
_{ij}(\cdot )$ are continuous on $[t_{0},\,t_{f}]$. $\psi_{ij}(\cdot)$ is monotonically increasing; and $\phi_{ij}(\cdot)$ is Lipschitz continuous with constant $L_{ij}$.}\\

The combination of the two functions, along with assumption {\bf (A0)}, can represent rather general travel cost structures. For example, the effective path delay given in \eqref{ed} is a special case of \eqref{moregeneralcost}, in which 
$$
\phi_{ij}(t)~\doteq~-t,\qquad\psi_{ij}(t+D_p(t,\,h))~\doteq~t+D_p(t,\,h)+f\left(t+D_p(t,\,h)-T_A\right)
$$
Moreover, assumption {\bf A0} is easily satisfied if, in addition, $f'(\cdot)\geq -1$, which is reasonable since the unit cost of early arrival is less than or equal to the unit cost of elapsed travel time \citep{Small}. 

As a second example, the following frequently used form of travel cost 
\begin{equation}\label{abg}
\alpha D_p(t,\,h)+
\begin{cases}
\beta\left(T_A-(t+D_p(t,\,h)\right)\qquad &\hbox{if }~ t+D_p(t,\,h)~\leq~T_A
\\
\gamma\left(t+D_p(t,\,h)-T_A\right)\qquad &\hbox{if }~ t+D_p(t,\,h)~>~T_A
\end{cases}
\end{equation}
where $\gamma>\alpha>\beta>0$, is also a special case of \eqref{moregeneralcost}. This is easily seen by writing 
\begin{align*}
\phi_{ij}(t)&~\doteq~-\alpha t
\\
\psi_{ij}(t+D_p(t,\,h))&~\doteq~\alpha \big(t+D_p(t,\,h)\big)+
\begin{cases}
\beta\left(T_A-(t+D_p(t,\,h)\right)\qquad &\hbox{if }~ t+D_p(t,\,h)~\leq~T_A
\\
\gamma\left(t+D_p(t,\,h)-T_A\right)\qquad &\hbox{if }~ t+D_p(t,\,h)~>~T_A
\end{cases}
\end{align*}
\noindent One can easily check that assumption {\bf A0} is automatically satisfied.

\subsection{Continuity of the operator $\Phi^{\varepsilon}$}

The next theorem establishes the continuity of the new operator $\Phi^{\varepsilon}$ when viewed as a mapping from $\Lambda$ to $\big(L_+^2[t_0,\,t_f]\big)^{|\mathcal{P}|}$. Our argument relies on the already established results regarding the continuity of the effective delay operator $\Psi$ when certain link dynamics and network model are employed. These  include the continuity result for the {\it link delay model} \citep{Friesz1993} established in \cite{ldm}; the continuity result for the {\it Vickrey model} \citep{Vickrey}  established in \cite{existence}; the continuity result for the LWR-Lax model without spillback \citep{FHNMY} established in \cite{BH1}; and the continuity result for the LWR model with spillback established in \cite{LWRcont}. Therefore, Theorem \ref{propcontphi} below, at the very least, applies to these traffic network models. If other types of flow dynamics and/or network models are considered, new continuity result for $\Psi$ needs to be shown in order for Theorem \ref{propcontphi} to hold; but this is beyond the scope of this paper.

\begin{theorem}{\bf (Continuity of $\Phi^{\varepsilon}$ as an operator)}\label{propcontphi}
Assume that the effective path delay operator $\Psi: \Lambda\to \big(L^2_+[t_0,\,t_f]\big)^{|\mathcal{P}|}$ is continuous. In addition, let the functionals $\varepsilon_{ij}^p(\cdot): \Lambda\to \mathbb{R}_{++}$ be continuous $\forall p\in\mathcal{P}_{ij},\, \forall(i,\,j)\in\mathcal{W}$. Then the operator 
\begin{equation}\label{Phidef2}
\Phi^{\varepsilon}: ~ \Lambda~\rightarrow~ \big(L_+^2[t_0,\,t_f]\big)^{|\mathcal{P}|},\qquad h~\mapsto~\big(\Phi_p^{\varepsilon}(\cdot,\,h):~p\in\mathcal{P}\big)
\end{equation}
where 
\begin{equation}\label{Phidef3}
\Phi^{\varepsilon}_p(t,\,h)~=~\max\left\{\Psi_p(t,\,h),~ v_{ij}(h)+\varepsilon_{ij}^p(h)\right\}-\left(\varepsilon_{ij}^p(h)-\min_{q\in\mathcal{P}_{ij}}\left\{\varepsilon_{ij}^q(h)\right\}\right)\qquad \forall p\in\mathcal{P}_{ij}
\end{equation}
is continuous.
\end{theorem}
\begin{proof}
The proof is postponed until  \ref{subsecappcontproof}.
\end{proof}

\begin{remark}\label{rmkcontinuity}
Theorem \ref{propcontphi} highlights the fact that the continuity of $\Phi^{\varepsilon}$ or $\phi^{\varepsilon}$ is more general than the continuity of the effective path delay operator $\Psi$, simply because the latter implies the former. The reverse is false, and one can easily envisage situations where the effective delay operator is discontinuous but the corresponding $\Phi^{\varepsilon}$ and $\phi^{\varepsilon}$ are continuous. Consider, for example, a sequence of points $h^{(n)}\to h^*$ but $\displaystyle\lim_{n\to+\infty}\Psi(h^{(n)}) \neq \Psi(h^*)$. If the tolerance $\varepsilon$ is large enough to cover such a jump discontinuity, the functions $\Phi^{\varepsilon}(h^{(n)})$ can still converge to $\Phi(h^*)$ in the $L^2$-norm. A more concrete example is provided below.
\end{remark}

\begin{example}

\begin{figure}[h!]
\centering
\includegraphics[width=\textwidth]{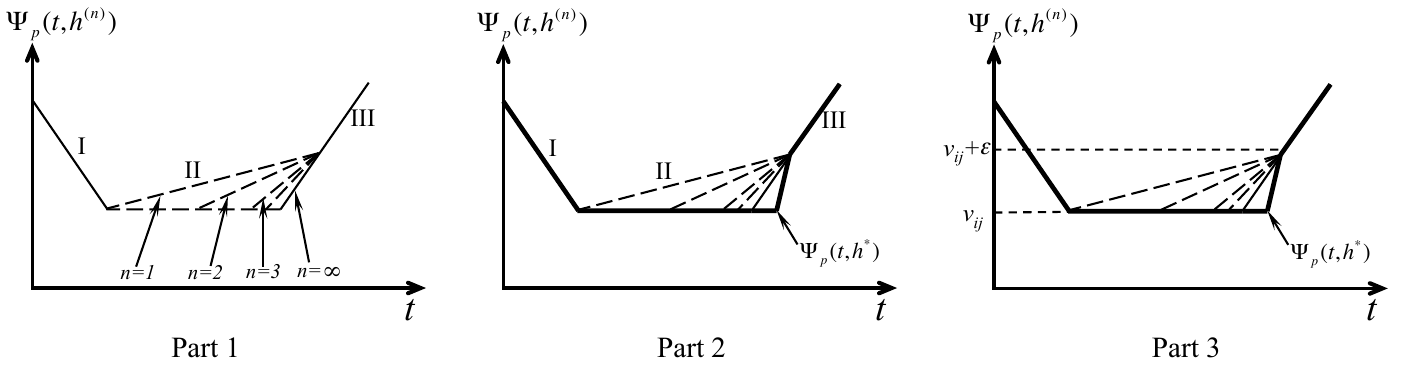}
\caption{\small Illustration that $\Phi^{\varepsilon}$ may be continuous even if $\Psi$ is not.}
\label{figContepsilon}
\end{figure}
In Part 1 of Figure \ref{figContepsilon}, we show the sequence $\Psi_p(\cdot,\,h^{(n)})$ and their limit $\displaystyle\lim_{n\to\infty}\Psi_p(\cdot,\,h^{(n)})$, where each function consists of three components, labeled by I, II and III in the figure. Moreover, these functions share the same components $I$ and $III$, and differ only in $II$. In Part 2, the function $\Psi_p(\cdot,\,h^*)$ is shown with thick line segments, and it is different from $\displaystyle\lim_{n\to\infty}\Psi_p(\cdot,\, h^{(n)})$, indicating a discontinuity of $\Psi$ at the point $h^*$. In Part 3, if all the differences in these  functions are within the indifference band, $\Phi^{\varepsilon}_p(\cdot,\,h^{(n)}),\,\displaystyle\lim_{n\to\infty}\,\Phi^{\varepsilon}_p(\cdot,\,h^{(n)})$, and $\Phi^{\varepsilon}_p(\cdot,\,h^*)$ would coincide. As a result, $\Phi^{\varepsilon}$ is continuous at $h^*$.
\end{example}

This example shows that the continuity of $\Phi^{\varepsilon}$ is indeed a weaker regularity condition than the continuity of $\Psi$.

\section{Existence of VT-BR-DUE and BR-DUE}\label{secexistence}
The existence of VT-BR-DUE or BR-DUE can be conveniently analyzed through their established VI formulations and Browder's fixed-point existence theorem:

\begin{theorem}
\label{mainthm}{\bf \citep{Browder}} Let $K$ be a compact convex subset of the locally convex
topological vector space $E$, $T$ a continuous (single-valued) mapping of $K$
into $E^{\ast }$, the dual space of $E$. Then, there exists $u_{0}$ in $K$ such that 
\begin{equation*}
\Big<T(u_{0}),\,u-u_0\Big>~\geq ~0\qquad \forall u\in K
\end{equation*}
\end{theorem}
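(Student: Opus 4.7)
The plan is to derive this from the Ky Fan / KKM lemma, which is the topological-vector-space generalization of the Knaster--Kuratowski--Mazurkiewicz theorem, rather than appealing to Brouwer's fixed point theorem directly in $E$. The strategy is to associate to each test point $u \in K$ a candidate solution set $G(u) \subset K$, show this family is a KKM covering of $K$ with closed (hence compact) values, and then invoke the finite intersection property to extract a simultaneous solution.

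Concretely, I would first set, for each $u\in K$,
\[
G(u) ~=~ \{\,v \in K : \langle T(v),\, v - u \rangle \geq 0\,\}.
\]
Each $G(u)$ contains $u$ since $\langle T(u),\,u-u\rangle = 0$, so is nonempty. Next I would verify the KKM property: for any finite collection $u_1,\ldots,u_n \in K$, I claim $\mathrm{conv}\{u_1,\ldots,u_n\} \subset \bigcup_{i=1}^n G(u_i)$. If instead some $v = \sum_{i} \lambda_i u_i$ (with $\lambda_i \geq 0$, $\sum_i \lambda_i = 1$) failed to lie in any $G(u_i)$, then $\langle T(v),\, v-u_i\rangle < 0$ for every $i$; taking the $\lambda_i$-weighted sum over $i$ yields $\langle T(v),\, v - v\rangle < 0$, which contradicts that the pairing vanishes. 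Then I would check that each $G(u)$ is closed in $K$: the real-valued map $v \mapsto \langle T(v),\, v-u\rangle$ is continuous because $T$ is continuous into $E^{\ast}$ and the evaluation pairing is continuous in the relevant topology on $E^{\ast} \times E$, so $G(u)$ is the preimage of $[0,\infty)$ under a continuous function. Since $K$ is compact, each $G(u)$ is compact as well.

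Finally, from the KKM property and closedness the family $\{G(u):u\in K\}$ has the finite intersection property (this is the content of Ky Fan's lemma applied inside the compact set $K$), and compactness of $K$ then forces $\bigcap_{u\in K} G(u) \neq \emptyset$. Picking any $u_0$ in this intersection gives $\langle T(u_0),\, u_0 - u\rangle \geq 0$ for every $u\in K$, which is exactly the desired variational inequality.

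The main obstacle, and the only place where the locally convex TVS setting demands care, is the continuity step in verifying that $G(u)$ is closed. The statement does not specify which topology on $E^{\ast}$ makes $T$ continuous, and one must match that topology with the topology on $K$ inherited from $E$ so that the bilinear evaluation $(\phi,v) \mapsto \langle \phi, v\rangle$ is jointly sequentially (or net-wise) continuous on the relevant subsets. Once that compatibility is pinned down, everything else is a routine KKM argument; the algebraic KKM check and the compactness-driven finite-intersection conclusion involve no delicate analysis.
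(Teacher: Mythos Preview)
The paper does not supply its own proof of this statement; it simply refers the reader to \cite{Browder}. Your KKM/Ky Fan argument is a correct and standard route to this variational-inequality existence result, and the algebraic verification of the KKM covering property and the compactness-driven intersection step are both sound as written.

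The one substantive caveat you raise --- matching the topology on $E^{\ast}$ with that on $E$ so that $v \mapsto \langle T(v),\, v-u\rangle$ is continuous on $K$ --- is exactly the right place to be careful, and is not fully resolved in your sketch. In Browder's original setting the standard device is to reduce to finite-dimensional sections of $K$ (where all Hausdorff vector topologies coincide and the pairing is automatically jointly continuous), obtain a solution on each such section, and then pass to the limit via compactness of $K$; this sidesteps having to specify a topology on $E^{\ast}$ for which the evaluation is jointly continuous. Your direct closedness argument works cleanly if, for instance, $E^{\ast}$ carries the weak$^{\ast}$ topology and $T(K)$ is equicontinuous, or if $E$ is normed and $E^{\ast}$ carries the norm topology. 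Browder's own proof proceeds via a Kakutani-type fixed-point theorem for upper-semicontinuous multivalued maps rather than KKM, so your approach differs in machinery though not in strength; the KKM route is arguably more transparent here since it avoids the set-valued apparatus entirely.
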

\begin{proof}
See \cite{Browder}.
\end{proof}

\begin{remark}
Browder's theorem is one generalization of the famous Brouwer's fixed-point theorem. Existence results for DUE have been established in a number of papers based on other form of Brouwer's fixed-point theorem, either explicitly or implicitly. Those papers include \cite{MW2010, Mounce, Mounce2007, MS2007, WTC} and \cite{ZM}.
\end{remark}

Approaches based on Browder's theorem require the set of feasible path departure rates to be compact and convex in a Hilbert space, and typically involve an {\it a priori} bound on all the path departure rates \citep{ZM}. The invocation of the {\it a priori} boundedness on departure rates is used to secure compactness needed for a topological argument. This is particularly relevant in continuous-time models due to the fact that closedness and boundedness together no longer guarantee compactness in infinite-dimensional spaces.

Note should be taken on the following fact: the boundedness assumption is less of an issue for the route-choice (RC) DUE by virtue of problem formulation; that is, for RC\ DUE, the
travel demand constraints are of the following form: 
\begin{equation}
\sum_{p\in \mathcal{P}_{ij}}h_{p}(t)~=R_{ij}(t)\qquad \forall ~t,\quad
\forall ~(i,\,j)\in \mathcal{W}  \label{introeqn1}
\end{equation}%
where $\mathcal{W}$ is the set of origin-destination pairs, $\mathcal{P}_{ij}
$ is the set of paths connecting $\left( i,j\right) \in \mathcal{W}$ and $
h_{p}(\cdot)$ is the departure rate along path $p$. $R_{ij}(t)$
represents the rate (not volume) at which travelers leave origin $i$ with
the intent of reaching destination $j$ at time $t$; each such trip rate is
assumed to be bounded from above. Since (\ref{introeqn1}) is imposed
point-wise and every path departure rate $h_{p}(\cdot)$ is nonnegative, we are assured that any feasible vector $h(\cdot)=\left( h_{p}(\cdot):~p\in \mathcal{P}_{ij},~\left(i,\,j\right) \in \mathcal{W}\right)$ is uniformly bounded. On the other hand, the
SRDT user equilibrium imposes the following constraints on the path departure rates: 
\begin{equation}
\sum_{p\in \mathcal{P}_{ij}}\int_{t_{0}}^{t_{f}}h_{p}(t)\,dt~=~Q_{ij}\qquad
\forall ~(i,\,j)\in \mathcal{W}  \label{introeqn2}
\end{equation}%
where $Q_{ij}\in \mathcal{\mathbb{R} }_{++}^{1}$ is the volume (not rate) of
travelers departing node $i$ with the intent of reaching node $j$. The
integrals in (\ref{introeqn2}) are interpreted as Lebesgue; hence, \eqref
{introeqn2} alone is not enough to assure bounded path departure rates. This
observation has been the major hurdle to proving existence without the 
{\it a priori} invocation of bounds on the path departure rates.

The existence of a dynamic user equilibrium trivially implies the existence of a boundedly rational dynamic user equilibrium. Therefore, a meaningful discussion of the existence of VT-BR-DUEs or BR-DUEs, as we aim in this paper, should rely on assumptions weaker than those made for normal DUEs.  In particular, we will first review the existence conditions for SRDT DUE \citep{existence}, which are arguably the most general existence conditions in the literature. In fact, as we later show, a violation of these conditions may result in the non-existence of SRDT DUE. Then, we will show the existence of VT-BR-DUE under conditions weaker than those assumed by \cite{existence} for the SRDT DUE case, thereby establishing that the existence of VT-BR-DUE is indeed more general than DUE.

\subsection{Review of existence results for SRDT DUE}

The existence of SRDT DUE is shown in \cite{existence}. In that paper, the effective path delay is expressed in the same way as we did in Section \ref{secaltcost}. That is, for any $(i,\,j)\in\mathcal{W}$, 
$$
\Psi_p(t,\,h)~=~\phi_{ij}(t)+\psi_{ij}\left(t+D_p(t,\,h)\right)\qquad\forall p\in\mathcal{P}_{ij}
$$
where $t$ denotes the departure time, and $t+D_p(t,\,h)$ represents the arrival time. The following sufficient conditions for the existence of DUE have been established. 
\\
\noindent {\bf A1.} For each $(i,\,j)\in\mathcal{W}$, $\phi_{ij}(\cdot)$ and $\psi_{ij}(\cdot)$ are continuous on $[t_0,\,t_f]$. $\phi_{ij}(\cdot)$ is monotonically decreasing while $\psi_{ij}(\cdot)$ is monotonically increasing. Moreover, $\phi_{ij}(\cdot)$ is Lipschitz continuous with constant $L_{ij}$; and there exists $\Delta_{ij}>0$ such that 
\begin{equation}\label{a1eqn}
\psi_{ij}(t_2)-\psi_{ij}(t_1)~\geq~\Delta_{ij}(t_2-t_1)\qquad\forall t_0~\leq~t_1~<~t_2~\leq~t_f
\end{equation}

\noindent {\bf A2.} Each link $a\in\mathcal{A}$ of the network has a finite exit flow capacity $M_a<\infty$.\\

\noindent {\bf A3.} The effective delay operator $\Psi$ is continuous from $\Lambda$ into $\left(L^2_+[t_0,\,t_f]\right)^{|\mathcal{P}|}$.

\begin{remark}
 Assumption {\bf A1} holds true for the cost function \eqref{abg} with $\gamma>\alpha>\beta>0$. A proof is provided in \cite{existence}. 
\end{remark}

\begin{remark}
Assumption {\bf A2} obviously applies to a large class of traffic flow models, including the Lighthill-Whitham-Richards model \citep{LW, Richards}, the cell transmission model \citep{CTM1, CTM2}, the link transmission model \citep{LTM, LKWM}, the LWR-Lax model \citep{FHNMY}, and the Vickrey model \citep{Vickrey, GVM1, GVM2}.  However, minor exception of {\bf A2} does exist, e.g. the link delay model proposed by \cite{Friesz1993}. 
\end{remark}

\begin{remark}
Assumption {\bf A3} is the most recognized condition for existence, and it has been shown to be true when related to the link delay model \citep{ldm}, the Vickrey model \citep{existence}, and the LWR model \citep{LWRcont}.  Assumptions {\bf A1}-{\bf A2} together are meant to tackle the issue of compactness mentioned earlier, which would otherwise have to be handled using the {\it ad hoc} boundedness on the path departure rates. 
\end{remark}

In the remainder of this section, we will first illustrate the non-existence or difficulties in proving existence of SRDT DUE without assumptions {\bf A1} and {\bf A2} (Section \ref{subsecDUEnotexist}). Then, we will show the existence of VT-BR-DUE and BR-DUE in the absence of {\bf A1} and {\bf A2} (Section \ref{subsecVTBRDUEexistence}). This allows us to reach the conclusion that the existence of boundedly rational dynamic user equilibria is indeed more general than that of DUEs.

\subsection{Existence of SRDT DUE in violation of {\bf A1} or {\bf A2}}\label{subsecDUEnotexist}

\subsubsection{Non-existence of SRDT DUE with the Vickrey model}\label{subsubsecnoexist}
\cite{BH} show the existence and uniqueness of a Nash-like traffic equilibrium on a network with one link and a single bottleneck. That paper employs a hydrodynamic model with a point-queue at the traffic bottleneck, which subsumes the Vickrey model \citep{Vickrey} as a special case. A closed-form representation of a Nash equilibrium among the drivers is provided based on the following choices of travel cost functions
\begin{equation}\label{costfcnviolation}
\phi(t)~=~-t,\qquad \psi\left(t+D_p(t,\,h)\right)~=~
\begin{cases}
0 \qquad & t+D_p(t,\,h)~\leq~T_A
\\
\left(t+D_p(t,\,h)-T_A\right)^2 \qquad & t+D_p(t,\,h)~>~T_A
\end{cases}
\end{equation}
where $T_A$ is the target arrival time, $t$ denotes departure time, and $t+D_p(t,\,h)$ is the path exit time. The subscript $ij$ is omitted here since there is only one O-D pair in the network. Clearly, the definition of $\psi$ violates assumption {\bf A1} and, in particular, \eqref{a1eqn}. That paper proves the uniqueness of such a Nash equilibrium, in which the path departure rate is not a square-integrable function. Instead, it is a distribution containing a Dirac-delta. In other words, no square-integrable path departure rate can yield equal and minimum cost for all the drivers, thus no solution in the sense of Definition \ref{duedef} exists in this case. Again, this is due to the violation of {\bf A1} in the definition \eqref{costfcnviolation}.

\subsubsection{Difficulty in proving existence for SRDT DUE with the link delay model}\label{subsubsecdiffexistence}

The link delay model (LDM) is originally proposed by \cite{Friesz1993} based on a link travel time function. In particular, it is assumed that the link traversal time, denoted by $D_a(t)$, is expressed as an affine function of the link occupancy $X(t)$ where $t$ denotes the link entry time:
$$
D_a(t)~=~\alpha X(t)+\beta
$$
for some $\alpha,\,\beta>0$. Clearly, the LDM allows arbitrary link exit flow -- a violation of assumption {\bf A2}. As a result, the existence of SRDT DUE with LDM cannot be established in the framework proposed by \cite{existence}. When this happens, one may rely on the  restrictive assumption that all the path departure rates are {\it a priori} bounded in order to prove existence, but such a proof does not guarantee the existence of any DUE solution characterized solely by the set $\Lambda$, which contains unbounded functions.

\subsection{Existence result for VT-BR-DUE}\label{subsecVTBRDUEexistence}

We present our main existence result for the VT-BR-DUE problem in the following theorem. 

\begin{theorem}\label{existencethembrdue}{\bf (Existence of VT-BR-DUE)}
Assume that 
\begin{enumerate}
\item Assumption {\bf A0} holds.
\item The operator $\Phi^{\varepsilon}$: $\Lambda\rightarrow \left(L_+^2[t_0,\,t_f]\right)^{|\mathcal{P}|}$ is continuous.
\item The functionals $\varepsilon^{p}_{ij}(\cdot): \Lambda\rightarrow \mathbb{R}_+$ are continuous and bounded away from zero. That is, there exists $\varepsilon^{min}>0$ such that $\varepsilon_{ij}^p(h)\geq \varepsilon^{min}, \forall h\in\Lambda,\,\forall p\in\mathcal{P}_{ij},\,\forall (i,\,j)\in\mathcal{W}$.
\end{enumerate}
Then, the variable tolerance boundedly rational dynamic user equilibrium with simultaneous route and departure time choices, as defined in Definition \ref{vtbrduedef}, exists.
\end{theorem}
\begin{proof}
The proof is postponed until \ref{secapp4}.
\end{proof}

\begin{remark}\label{rmkmoregeneralassumption}
Theorem \ref{existencethembrdue} is significant in that it relaxes all three conditions {\bf A1} - {\bf A3} required for the existence of normal DUEs. First of all, compared to {\bf A1}, {\bf A0} drops the assumptions that $\phi_{ij}(\cdot)$ is monotonically decreasing and \eqref{a1eqn}. Secondly, assumption {\bf A2} is completely omitted in this theorem. Last but not least, Theorem \ref{existencethembrdue} relies on the continuity of $\Phi^{\varepsilon}$ instead of the continuity of $\Psi$. According to Remark \ref{rmkcontinuity}, the former is more general than the latter.  
\end{remark}

As a side note, item 3 from Theorem \ref{existencethembrdue} is a reasonable assumption necessary for the existence of VT-BR-DUE. Otherwise, if we allow $\varepsilon_{ij}^p(h^{(n)})$ to tend to zero for a sequence of points $h^{(n)}$, then we are back to the normal DUE case and cannot expect a more general existence result to hold.

According to our discussions in Section \ref{subsubsecnoexist} and Section \ref{subsubsecdiffexistence}, the two unresolved DUE existence problems, due to the violation of {\bf A1} and {\bf A2}, respectively, are fully addressed in the BR case by Theorem \ref{existencethembrdue}. Such an observation, along with Remark \ref{rmkmoregeneralassumption}, substantiate our claim that the existence of VT-BR-DUEs is more general than the existence of DUEs; and this paper provides a precise mathematical interpretation of this statement.

\section{Characterization of the solution set of VT-BR-DUE problems}\label{secsolutionchara}

This section provides a mathematical characterization of the solution set of the VT-BR-DUE problems. Theorem \ref{existencethembrdue} shows that such a set is non-empty under mild conditions, which will continue to hold throughout this section.

We will restrict our analysis to a finite-dimensional space (i.e., discrete-time problems) for the following reasons: (1) it is easier to describe and depict various geometric and topological properties in a finite-dimensional space, which is effectively an Euclidean space; (2) working with finite-dimensional cases sheds light on the numerical solutions of VT-BR-DUEs as almost all calculations are done in a discrete-time setting. The discrete-time VT-BR-DUE problem will be rigorously defined in the next section.

\subsection{Discrete-time VT-BR-DUE problem}
We consider, for each $n\geq 1$, a uniform partition of $[t_0,\,t_f]$ into $n$ subintervals $I_1,\,\ldots,\,I_n$. Define 
\begin{equation}\label{dtfeasible}
\Lambda^n~\doteq~\left\{ h\in\Lambda:~~ h_p(\cdot) \hbox{ is constant on }~ I_k,\quad\forall 1\leq k\leq n, \quad \forall p\in\mathcal{P}\right\}\qquad\forall n\geq 1
\end{equation}
\noindent Each $\Lambda^n$ is the intersection of $\Lambda$ and the space of piecewise-constant functions. Notice that each $\Lambda^n$ still consists of continuous-time functions. We next introduce the discrete-time counterpart of the departure rate vectors. Given $n\geq 1$ and $h\in\Lambda^n$, define a vector $\bar h_p\in\mathbb{R}_+^{n}$ such that $\bar h_p(k)=h_p(t),\,t\in I_k$ for all $1\leq k\leq n$. In other words, $\bar h_p$ is the discrete-time departure rate. Naturally, we let $\bar h=\big(\bar h_p,\,p\in\mathcal{P}\big)\in\mathbb{R}_+^{n\times|\mathcal{P}|}$ be the vector of all the discrete-time departure rates. We define the feasible set of the discrete-time path departure rates as
\begin{equation}\label{barLambdadef}
\bar\Lambda^n~\doteq~\left\{\bar h\in \mathbb{R}_+^{n\times|\mathcal{P}|}:~~\delta t\sum_{p\in\mathcal{P}_{ij}}\sum_{k=1}^n \bar h_p(k)  ~=~ Q_{ij}\quad \forall (i,\,j)\in\mathcal{W} \right\}
\end{equation}
\noindent where $\delta t$ is the time step size.

We will next define the effective path delays in discrete time. Given $h\in\Lambda^n$, the corresponding effective path delays $\Psi_p(\cdot,\,h),\,p\in\mathcal{P}$ is, in general, not piecewise constant. We let
$$
{1\over |I_k|}\int_{I_k}\Psi_p(t,\,h)\,dt\qquad\forall 1\leq k\leq n, \quad\forall p\in\mathcal{P}
$$
be the average value of the effective delay on path $p$ corresponding to departure interval $I_k$, where $|I_k|$ is the length of $I_k$.  Let us define the discrete-time effective path delay operator according to the following chain of mappings
$$
\bar h\in \bar\Lambda^n~\rightarrow~ h\in\Lambda^n~\rightarrow~\big(\Psi_p(\cdot,\,h),~p\in\mathcal{P}\big)~\rightarrow~\left({1\over |I_k|}\int_{I_k}\Psi_p(t,\,h)\,dt,~ 1\leq k\leq n, ~p\in\mathcal{P}  \right),
$$
\noindent which defines a mapping 
$$
\bar\Psi: \bar\Lambda^n \to \mathbb{R}_+^{n\times|\mathcal{P}|},\qquad \bar h \mapsto \left(\bar\Psi_p(k,\,\bar h), ~1\leq k\leq n, ~ p\in\mathcal{P}\right)
$$
where 
\begin{equation}\label{barPsipkdef}
\bar\Psi_p(k,\,\bar h)~\doteq~{1\over |I_k|}\int_{I_k}\Psi_p(t,\,h)\,dt
\end{equation}
$\bar \Psi$ is the discrete-time counterpart of the effective path delay operator.

With these preliminaries, we are now ready to introduce discrete-time versions of the DUE and VT-BR-DUE problems. 
\begin{definition}\label{defdtDUE}{\bf (Discrete-time DUE problem)}
For each $n\geq 1$, a vector of path departure rates $\bar h^{*}\in\bar\Lambda^n$ is a solution of the finite-dimensional DUE problem if for any $(i,\,j)\in\mathcal{W}$, 
\begin{equation}
\bar h^{*}_p(k)~>~0,~ p\in\mathcal{P}_{ij}~\Longrightarrow~\bar\Psi_p(k,\,\bar h^{*})~=~v_{ij}(\bar h^*)\qquad\forall 1\leq k\leq n
\end{equation}
\noindent where $\displaystyle v_{ij}(\bar h^*)=\min_{1\leq k\leq n,\,p\in\mathcal{P}_{ij}}\bar\Psi_p(k,\,\bar h^{*})$.
\end{definition}

It is not difficult to see that the problem defined above is equivalent to the following finite-dimensional variational inequality:
\begin{equation}\label{fdduevi}
\left.
\begin{array}{c}
\hbox{find}~\bar h^{*}\in \bar\Lambda^n~~\hbox{such that}
\\
\displaystyle \sum_{p\in\mathcal{P}}\sum_{k=1}^n\bar\Psi_p(k,\,\bar h^{*})(\bar h_p(k)-\bar h_p^{*}(k))~\geq~0
\\
\forall~\bar h\in\bar\Lambda^n
\end{array}
\right\}VI\big(\bar\Psi,\,\bar\Lambda^n,\,[t_0,\,t_f]\big)
\end{equation}
The discrete-time VT-BR-DUE problem is similarly defined. To do this, we let $\bar \varepsilon_{ij}^p(\cdot),\, p\in\mathcal{P}_{ij},\,(i,\,j)\in\mathcal{W}$ be a set of mappings from $\bar\Lambda^n$ into $\mathbb{R}_{++}$, such that $\bar \varepsilon_{ij}^p(\bar h)=\varepsilon_{ij}^p(h)$, where $\bar h\in\bar\Lambda^n$ is the discrete-time counterpart of $h\in\Lambda^n$. 
\begin{definition}\label{defdtVT-BR-DUE}{\bf (Discrete-time VT-BR-DUE problem)}
Given $\bar\varepsilon=\big(\bar\varepsilon^p_{ij}(\cdot):~p\in\mathcal{P}_{ij},\,(i,\,j)\in\mathcal{W}\big)$, a vector $\bar h^{*}\in\bar\Lambda^n$ is a solution of the discrete-time VT-BR-DUE problem if, for any $(i,\,j)\in\mathcal{W}$, 
\begin{equation}
\bar h_p^*(k)~>~0,~ p\in\mathcal{P}_{ij}~\Longrightarrow~\bar\Psi_p(k,\,\bar h^*)\in\left[v_{ij}(\bar h^*),\,v_{ij}(\bar h^*)+\bar\varepsilon_{ij}^p(\bar h^*)\right]
\end{equation}
\noindent where $\displaystyle v_{ij}(\bar h^*)=\min_{1\leq k\leq n,\,p\in\mathcal{P}_{ij}}\bar\Psi_p(k,\,\bar h^{*})$.
\end{definition}
The following finite-dimensional VI characterizes the solution of the discrete-time VT-BR-DUE problem. 
\begin{equation}\label{fdvtbrduevi}
\left.
\begin{array}{c}
\hbox{find}~\bar h^{*}\in \bar\Lambda^n~~\hbox{such that}
\\
\displaystyle \sum_{p\in\mathcal{P}}\sum_{k=1}^n\bar\Phi^{\bar\varepsilon}_p(k,\,\bar h^{*})(\bar h_p(k)-\bar h_p^{*}(k))~\geq~0
\\
\forall~\bar h\in\bar\Lambda^n
\end{array}
\right\}VI\big(\bar\Phi^{\bar\varepsilon},\,\bar\Lambda^n,\,[t_0,\,t_f]\big)
\end{equation}
where 
\begin{equation}\label{barPhidef1}
\bar\Phi^{\bar\varepsilon}_p(k,\,\bar h^*)=\max\left\{\bar\Psi_p(k,\,\bar h^*),~ v_{ij}(\bar h^*)+\bar\varepsilon_{ij}^p(\bar h^*)\right\}-\left(\bar\varepsilon_{ij}^p(\bar h^*)-\min_{q\in\mathcal{P}_{ij}}\left\{\bar\varepsilon_{ij}^q(\bar h^*)\right\}\right)\quad 1\leq k\leq n,~ p\in\mathcal{P}_{ij}
\end{equation}
\noindent which defines the operator $\bar\Phi^{\bar\varepsilon}: \bar\Lambda^n\to \mathbb{R}_+^{n\times|\mathcal{P}|}$, ~$\bar h\mapsto \big(\bar\Phi_p^{\bar\varepsilon}(k,\,\bar h), ~1\leq k\leq n,~p\in\mathcal{P}\big)$.

The following lemma regarding the continuity of the discrete-time operators is useful as it substantiates the assumptions made in subsequent analysis. 
\begin{lemma}\label{lemmabarphicont}{\bf (Continuity of $\bar\Psi$ and $\bar\Phi^{\bar\varepsilon}$)} If $\Psi: \Lambda\to \big(L_+^2[t_0,\,t_f]\big)^{|\mathcal{P}|}$ and $\varepsilon_{ij}^p(\cdot): \Lambda\to \mathbb{R}_+$ are continuous, then so are $\bar\Psi: \bar\Lambda^n \to \mathbb{R}_+^{n\times|\mathcal{P}|}$ and  $\bar\Phi^{\bar\varepsilon}: \bar\Lambda^n \to \mathbb{R}_+^{n\times|\mathcal{P}|}$.
\end{lemma}
\begin{proof}
The proof is postponed until \ref{subsecappbarcont}.
\end{proof}

\subsection{Characterization of the solution set of VT-BR-DUE}

We begin with the first theorem that characterizes the solution set as being closed and bounded in the finite-dimensional Euclidean space $\mathbb{R}^{n\times|\mathcal{P}|}$.

\begin{proposition}\label{thmclosedness}{\bf (Compactness of the solution set)}
Let $\bar\Phi^{\bar\varepsilon}$ be continuous as shown in Lemma \ref{lemmabarphicont}, then all the solutions of the discrete-time VT-BR-DUE problem form a closed and bounded, and thus compact, set in $\bar\Lambda^n\subset \mathbb{R}^{n\times|\mathcal{P}|}$ for every $n\geq 1$.
\end{proposition}
\begin{proof}
The proof is deferred to  \ref{subsecappcompact}
\end{proof}

Having established that for each $n\geq 1$, the discrete-time solution set, denoted by $\Omega^n$, is compact, we would like to further characterize its interior points. An interior point of a set is such that there exists a ball centered at this point that is completely contained in the set. In other words, if a solution $\bar h^*\in\Omega^n$ is an interior point, then one can perturb such a point in any direction by a small amount and find another solution.  Additionally, we say that a point $x$ is an interior point of a set $S$ relative to another set $X\supset S$ if there exists a ball $\mathcal{B}_{x}^{\delta}$ centered at $x$ with radius $\delta>0$ such that $\mathcal{B}_x^{\delta}\cap X\subset S$.

\begin{example}{\bf (Nonexistence of interior points of $\Omega^n$ relative to the whole space)}
We first observe that $\Omega^n$ does not have any interior point relative to the whole space $\mathbb{R}^{n\times|\mathcal{P}|}$.  This is because any ball $\mathcal{B}_{\bar h^*}^{\delta}\subset \mathbb{R}^{n\times|\mathcal{P}|}$ centered at a solution $h^*\in\Omega^n$ with radius $\delta>0$ obviously contains points that violate the demand satisfaction constraints \eqref{barLambdadef} and hence do not belong to the set $\Omega^n$. 
\end{example}

The next question we would like to ask is whether $\Omega^n$ has any interior points relative to $\bar\Lambda^n$. Again, the answer is no, meaning that a point $\bar h\in\bar\Lambda^n$ arbitrarily close to a solution $\bar h^*$ may fail to be a solution. This will be illustrated in the following example.

\begin{example}\label{nointexample}{\bf (Nonexistence of interior points of $\Omega^n$ relative to $\bar\Lambda^n$)}
We consider a network consisting of just one path $p$. Let the time horizon $[t_0,\,t_f]$ be large enough such that for any solution $\bar h_p^*\in\Omega^n$, the effective path delay corresponding to the first time interval is far greater than the experienced effective path delays for utilized departure intervals. That is,
$$
\bar h^*_p(k)~>~0~\Longrightarrow~\bar\Psi_p(k,\,\bar h_p^*)~\ll \bar\Psi_p(1,\,\bar h_p^*)
$$
This corresponds to the situation where a driver departures very early, say at 2am in the morning when the target arrival time at work is 8am; as a result, he/she experiences a great travel cost even though there is little or no congestion. See Figure \ref{figNointerior} for an illustration of this situation. 

To see that a point in $\bar\Lambda^n$ close enough to $\bar h_p^*$ may not be a VT-BR-DUE solution, we simply move some traffic from the departure window of $\bar h_p^*$ to the first time interval, and call the resulting departure pattern $\bar h_p^{**}$, which clearly belongs to $\bar\Lambda^n$ but is not a VT-BR-DUE solution,  despite the fact that it differs from $\bar h_p^*$ by as little as one wants. This shows that $\Omega^n$ has no interior points when viewed as a subset of $\bar\Lambda^n$.
\end{example}

\begin{figure}[h!]
\centering
\includegraphics[width=\textwidth]{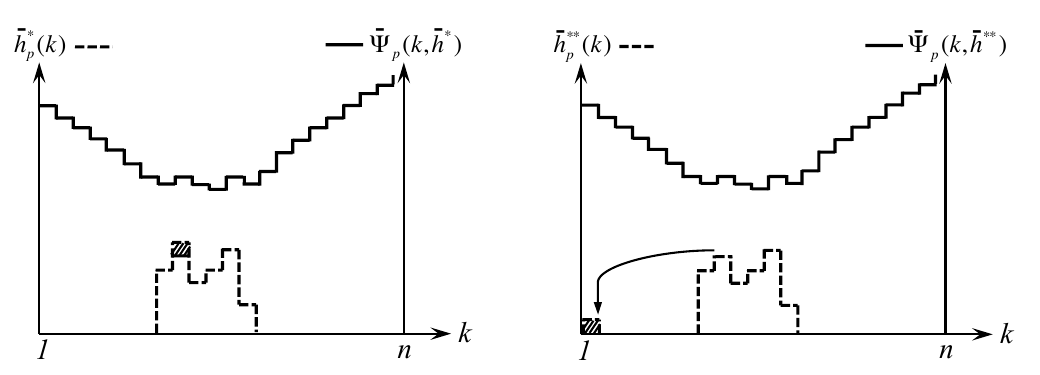}
\caption{\small An illustration that $\Omega^n$ does not have an interior point when viewed as a subset of $\bar\Lambda^n$ (i.e., a point in $\bar\Lambda^n$ arbitrarily close to a solution may fail to be a solution). One can move some traffic from the departure window of a solution $\bar h^*$ to the first time interval, and the resulting departure profile $\bar h^{**}_p\in\bar\Lambda^n$ is not a VT-BR-DUE, despite that $\left\|\bar h^*_p-\bar h^{**}_p\right\|_{2}$ can be arbitrarily small. Here $\|\cdot\|_2$ denotes the standard 2-norm in Euclidean spaces.}
\label{figNointerior}
\end{figure}

Example \ref{nointexample} suggests that, starting from a given solution, even the search is restricted within the feasible set $\bar\Lambda^n$, one cannot always find a solution of the VT-BR-DUE problem. Clearly, additional constraints for the search direction are needed to guarantee one or more solutions are found. The rest of this section is dedicated to the articulation of such conditions and the procedure for finding infinitely many solutions. The following concept turns out to be crucial.

\begin{definition}
A discrete-time VT-BR-DUE solution $\bar h^*$ is said to have the {\bf (P)} property if  for all $(i,\,j)\in\mathcal{W}$, 
$$
\bar h_p^*(k)~>~0,~p\in\mathcal{P}_{ij}~\Longrightarrow \bar\Psi_p(k,\,\bar h^*)~<~v_{ij}(\bar h^*)+\bar\varepsilon_{ij}^p(\bar h^*)\qquad\forall 1\leq k\leq n
$$
where $\displaystyle v_{ij}(\bar h^*)\doteq \min_{1\leq k\leq n,\,p\in\mathcal{P}_{ij}}\bar\Psi_p(k,\,\bar h^*)$ is the minimum effective delay within O-D $(i,\,j)$.
\end{definition}

The {\bf (P)} property means that in a VT-BR-DUE solution no driver experiences a travel cost that reaches his/her maximum tolerance (that is, $v_{ij}(\bar h^*)+\bar \varepsilon_{ij}^p(\bar h^*)$). As a special case of VT-BR-DUE, all normal DUE solutions satisfy the {\bf (P)} property. As we shall show later in Proposition \ref{propsamesupport}, there are in fact infinitely many solutions with the {\bf (P)} property.

We fix an ordering of the finite set $\left\{(I_k,\, p):~1\leq k\leq n, ~p\in\mathcal{P}\right\}$, which is viewed as a bijective mapping $\mathcal{O}$: $\left\{(I_k,\, p):~1\leq k\leq n, ~p\in\mathcal{P}\right\}\to \{1,\,2,\,\ldots,\,n\times|\mathcal{P}|\}$. Simply put, $\mathcal{O}$ assigns a label between $1$ and $n\times|\mathcal{P}|$ to any pair $(I_k,\, p)$, where $I_k$ is some time interval and $p$ is some path.  For any $\bar h^*\in\Omega^n$ that satisfies the {\bf (P)} property, we define  
$$
\mathcal{F}(\bar h^*)~\doteq~\left\{\mathcal{O}(I_k,\,p):~ \bar\Psi_p(k,\,\bar h^*)<v_{ij}(\bar h^*)+\bar\varepsilon_{ij}^p(\bar h^*),~p\in\mathcal{P}_{ij} \right\}~\subset~ \{1,\,2,\,\ldots,\,n\times|\mathcal{P}|\}
$$
\noindent In prose, $\mathcal{F}(\bar h^*)$ identifies the pairs $(I_k,\,p)$ whose corresponding effective path delays are strictly less than the maximum tolerable cost. Due to the fact that $\bar h^*$ satisfies the {\bf (P)} property, $\mathcal{F}(\bar h^*)$ is nonempty. We have the following crucial result.

\begin{proposition}\label{propsamesupport} 
Fix $n\geq 1$, let the effective path delay operator $\bar\Psi$ and the mappings $\bar\varepsilon_{ij}^p(\cdot),\, p\in\mathcal{P}_{ij},\,(i,\,j)\in\mathcal{W}$ be continuous. The following hold.
\begin{enumerate}
\item There exists at least one solution that satisfies the {\bf (P)} property. 
\item  For every solution $\bar h^*$ that satisfies the {\bf (P)} property, there exists $\delta>0$ such that 
$$
\mathcal{B}_{\bar h^*}^{\delta}~\cap~\bar\Lambda^n~\cap~ span\big\{\mathbf{e}_l:~ l\in \mathcal{F}(\bar h^*) \big\} ~\subset~\Omega^n
$$
where $\big\{\mathbf{e}_l\big\}_{l=1}^{n\times|\mathcal{P}|}$ is the natural basis  of $\mathbb{R}^{n\times|\mathcal{P}|}$, and $span\big\{\mathbf{e}_l:~ l\in \mathcal{F}(\bar h^*) \big\}$ is the linear subspace spanned by vectors $\mathbf{e}_l$, $l\in \mathcal{F}(\bar h^*)$. 
\item The set $\mathcal{B}_{\bar h^*}^{\delta}~\cap~\bar\Lambda^n~\cap~ span\big\{\mathbf{e}_l:~ l\in \mathcal{F}(\bar h^*) \big\}$ is infinite, and all points in this set have the {\bf (P)} property.
\end{enumerate}
\end{proposition} 
\begin{proof}
The proof is deferred to \ref{subsecapppropsamesupport}.
\end{proof}
\noindent In Proposition \ref{propsamesupport}, each statement is logically dependent on the preceding statement. We thus present them in the way they are despite the fact that the third statement makes the first one redundant.  

We interpret Proposition \ref{propsamesupport} as follows. It first ensures the existence of at least one solution with the {\bf (P)} property. Then, given a solution $\bar h^*$ with the {\bf (P)} property, one can find infinitely many VT-BR-DUE solutions by searching nearby points in the subset $\bar\Lambda^n\cap span\big\{\mathbf{e}_l:\, l\in \mathcal{F}(\bar h^*) \big\}$. As an immediate corollary, we have the following result by observing that the set $\mathcal{B}_{\bar h^*}^{\delta}~\cap~\bar\Lambda^n~\cap~ span\big\{\mathbf{e}_l:~ l\in \mathcal{F}(\bar h^*) \big\}$  is infinite and convex.

\begin{corollary}
For every solution $\bar h^*$ that satisfies the {\bf (P)} property, there exists an infinite, and convex solution set in $\mathbb{R}^{n\times|\mathcal{P}|}$ that contains $\bar h^*$.
\end{corollary}

We use a three-dimensional space to illustrate the structure of the solution set, although there is no fundamental difficulty to extend what is visualized to a very high-dimensional space. As shown in Figure \ref{figThreed}, only three dimensions are explicitly plotted. The set $\bar\Lambda^n$ is convex shown as the triangular set (which is analogous to a simplex in the three-dimensional space). Let $\bar h^*$ be a solution with the {\bf (P)} property. Assume that $\mathcal{F}(\bar h^*)=\{1,\,2\}$, then the point $\bar h^*$ lies in the plane spanned by $\mathbf{e}_1$ and $\mathbf{e}_2$.  The convex set containing VT-BR-DUE solutions, including $\bar h^*$ itself, is highlighted as the red line segment. All relevant solutions are within $\delta$-distance from $\bar h^*$.

\begin{figure}[h!]
\centering
\includegraphics[width=.55\textwidth]{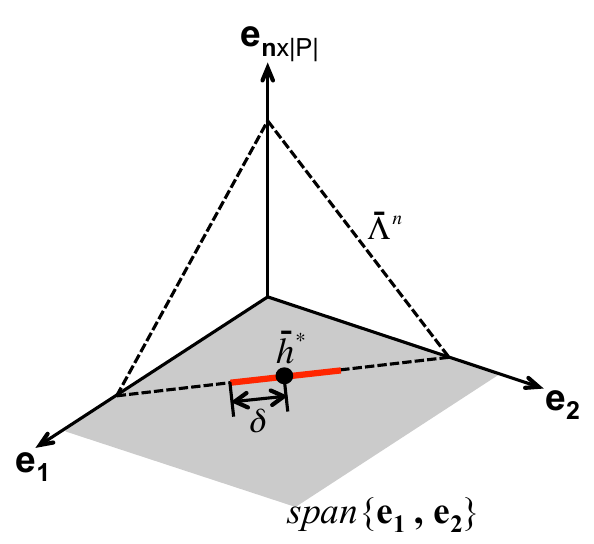}
\caption{\small A visualization of the solution set by searching in the neighborhood of $\bar h^*$ along the direction determined as the intersection of  $\bar\Lambda^n$ and $span\{\mathbf{e}_1,\,\mathbf{e}_2\}$.}
\label{figThreed}
\end{figure}

\subsection{Constructing connected subset of the solution set $\Omega^n$}

Proposition \ref{propsamesupport} suggests a way of expanding the solution set based on a given solution $\bar h^*$, obtained possibly through a particular computational algorithm (computational methods will be introduced in Section \ref{seccomputation}). In this section we will extend such a technique to obtain connected subsets of the solution set. 

To fix the idea, we start with a given solution $\bar h^1$ with the {\bf (P)} property. According to the proof of item 1 from Proposition \ref{propsamesupport} (see \ref{subsecapppropsamesupport}), such a point can be found by solving a modified VT-BR-DUE problem. We then search locally for more solutions that satisfy the {\bf (P)} property according to the procedure described in Proposition \ref{propsamesupport}, and call the resulting infinite and convex solution set be $\mathcal{S}(\bar h^1)$. For every $\bar h^2\in\mathcal{S}(\bar h^1)$, we repeat the same procedure to find $\mathcal{S}(\bar h^2)$. Such a process will continue until no more points can be included in the solution set. This procedure is illustrated in Figure \ref{figSolutionset}.

\begin{figure}[h!]
\centering
\includegraphics[width=.9\textwidth]{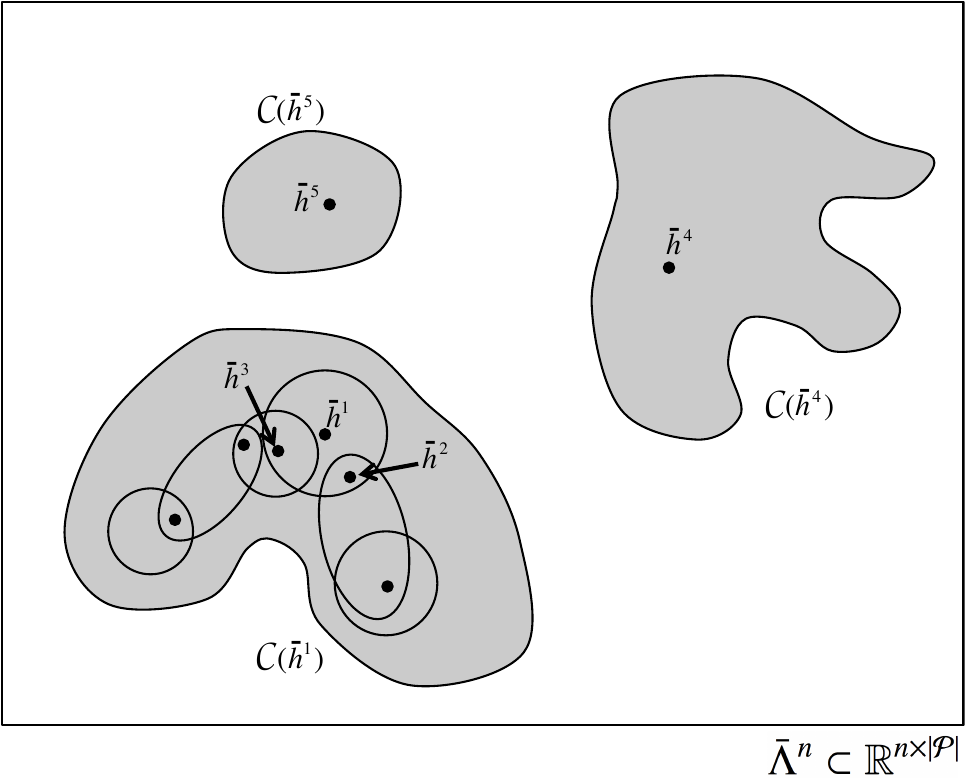}
\caption{\small Constructing connected subsets of the solution set $\Omega^n$. Each connected subset is generated from a single solution with the {\bf (P)} property.}
\label{figSolutionset}
\end{figure}

Once the procedure stops, we call the set of solutions obtained in this way the {\it child set} of $\bar h^*$, denoted by $\mathcal{C}(\bar h^*)$. The next proposition shows that any child set is connected. We begin with a precise mathematical notion of connectedness.

\begin{definition}{\bf (Connected set)}
A connected set is a set that cannot be divided into two nonempty subsets which are open in the relative topology
\end{definition}
\noindent Given a topological space $X$ and a subset $S\subset X$, a subset of $S$ is open in the relative topology if and only if it is an intersection of $S$ with an open set in $X$. 

\begin{proposition}\label{propconnected}
Let $\bar h^*$ be a solution with the {\bf (P)} property, then $\mathcal{C}(\bar h^*)$ is connected.
\end{proposition}
\begin{proof}
The proof is postponed until \ref{subsecapppropconnected}.
\end{proof}

\noindent For a given solution $\bar h^*$ with the {\bf (P)} property, when the procedure of expanding the solution set described above terminates, one can further take the closure of the set $\mathcal{C}(\bar h^*)$ to include all the boundary points since the limit of any converging sequence of solutions is also a solution. This last statement is due to closedness of the solution set (Proposition \ref{thmclosedness}).

The entire solution set may have one or more connected components, and is generally not convex. In order to provide a global characterization of the solution set, more information on the delay operator, such as generalized monotonicity, is required.

\section{Computation of the SRDT VT-BR-DUE and BR-DUE problems}\label{seccomputation}
This section focuses on the computational aspect of the VT-BR-DUE problems. In particular, we present three computational algorithms based on the variational inequality (VI) and {\it differential variational inequality} (DVI) formulations. The first algorithm is a {\it fixed-point} algorithm, which is an immediate consequence of the DVI formulation; the second algorithm is a {\it self-adaptive projection} method, which is adapted from \cite{HL2002} to solve the VI representation of the VT-BR-DUE problem; the third algorithm is called {\it proximal point method} \citep{Konnov}, which relies on successive regularization of the original VI problem to improve convergence. As we shall see later, these three methods rely on increasingly relaxed generalized monotonicity of the principle operator to ensure convergence.

\subsection{Fixed point algorithm based on the DVI formulation}\label{secFPP}
The DVI representation of dynamic user equilibrium with fixed travel demand has been demonstrated by \cite{DODG} and \cite{FKKR} using optimal control theory. Its application to the bounded rationality extension of DUE is straightforward by virtue of the VI formulation proposed in this paper. 

In order to formulate the VT-BR-DUE problem as a DIV, let us introduce the function $y_{ij}(\cdot): ~[t_0,\,t_f]\rightarrow [0,\,Q_{ij}]$ for each O-D pair $(i,\,j)$ where $Q_{ij}$ denotes the fixed travel demand. The quantity $y_{ij}(t)$ represents the total traffic volume that has departed from origin $i$ with the intent of reaching destination $j$ by time $t$. Note that the set of feasible path departure rates $\Lambda$ defined in \eqref{chapVI:lambda} can be equivalently written as a two-point boundary value problem:

\begin{equation}\label{eqn4}
\Lambda_1~\doteq~\left\{h(\cdot)\geq 0: ~~{d y_{ij}(t) \over dt}=\sum_{p\in\mathcal{P}_{ij}}h_p(t),~~ y_{ij}(t_0)~=~0,~~ y_{ij}(t_f)~=~Q_{ij}\quad \forall (i,\,j)\in\mathcal{W}\right\}
\end{equation}

The following equivalence theorem is adapted from the DVI-DUE equivalence theorem \citep{FKKR} by replacing the effective delay operator $\Psi$ with the new operator $\Phi^{\varepsilon}$. Theorem \ref{brduedvithm} is significant in that it relates the VT-BR-DUE problem, together with the BR-DUE problem as a special case, to the still emerging mathematical paradigm of DIVs and computational algorithms associated therein \citep{PS}.

\begin{theorem}\label{brduedvithm}{\bf (VT-BR-DUE equivalent to a DVI and a fixed-point problem)} Given $\varepsilon(\cdot): \Lambda_1\rightarrow \mathbb{R}_+^{|\mathcal{P}|}$, let the operator $\Phi^{\varepsilon}$ be defined via \eqref{Phidef} and \eqref{Phidef1}.  A vector of path departure rates $h^*\in\Lambda_1$ is a VT-BR-DUE if and only if $h^*$ solves the following differential variational inequality 
\begin{equation}\label{brduedvi}
\left.\begin{array}{c}
\hbox{find}~h^*\in\Lambda_1~\hbox{such that}
\\
\displaystyle \sum_{p\in\mathcal{P}}\int_{t_0}^{t_f}\Phi^{\varepsilon}_p(t,\,h^*)(h_p-h_p^*)\,dt~\geq~0
\\
\forall ~h\in\Lambda_1
\end{array}
\right\} DVI\big(\Phi^{\varepsilon},\,\Lambda_1,\,[t_0,\,t_f]\big)
\end{equation}

\noindent Moreover, this DVI problem is equivalent to the following fixed-point problem in a Hilbert space:
\begin{equation}\label{brduefp}
h^*~=~P_{\Lambda_1}\big[h^*-\alpha \Phi^{\varepsilon}(t,\,h^*)\big]
\end{equation}
where $P_{\Lambda_1}[\cdot]$ is the minimum norm projection onto $\Lambda_1$ and $\alpha>0$ is a fixed constant.
\end{theorem}
\begin{proof}
The proof is quite similar to those of Theorem 1 and 2 in \cite{FKKR}, and is omitted here. 
\end{proof}

\noindent The following fixed-point algorithm follows immediately from the fixed-point formulation \eqref{brduefp}. Its derivation relies on explicitly solving a linear-quadratic optimal control problem, which can be found in \cite{FHNMY}. Notice that the following algorithm also applies to BR-DUE with fixed tolerances by replacing $\Phi^{\varepsilon}$ with $\phi^{\varepsilon}$.

\begin{framed}
\noindent {\bf Fixed-point method}
\begin{description} 

\item[Step 0] Identify an initial feasible solution $h^0\in\Lambda_1$.  Set the iteration counter $k=0$. 

\item[Step 1] Solve the dynamic network loading problem with path departure rates given by $h^k$, and obtain the effective path delays $\Psi_p(\cdot,\,h^k),~\forall p\in\mathcal{P}$. Let $v_{ij}(h^k)$ be the minimum effective delay within O-D pair $(i,\,j)$. Then compute: 
$$
\Phi^{\varepsilon}_p\big(t,\,h^k\big)~=~\max\left\{\Psi_p\big(t,\,h^k\big),~ v_{ij}(h^k)+\varepsilon_{ij}^p(h^k)\right\}-\Big( \varepsilon_{ij}^p(h^k)-\min_{q\in\mathcal{P}_{ij}}\big\{\varepsilon_{ij}^q(h^k)\big\} \Big)
$$

\item[Step 2] For each $(i,\,j)\in\mathcal{W}$, solve the following equation for $\mu_{ij}$, using root-search algorithms (here $[x]_+\doteq\max\{0,\,x\}$). 
$$
\sum_{p\in \mathcal{P}_{ij}}\int_{t_0}^{t_f}\left[ h_{p}^{k}(t)
-\alpha \Phi^{\varepsilon} _{p}\big(t,\,h^{k}\big) +\mu_{ij}\right] _{+}dt~=~Q_{ij}
$$
\noindent Then update the next iterate $h^{k+1}=\{h_p^{k+1}: p\in\mathcal{P}\}$ where
$$
h_p^{k+1}(t)~=~\left[ h_{p}^{k}(t)
-\alpha \Phi^{\varepsilon} _{p}\big(t,\,h^{k}\big) +\mu_{ij}\right] _{+}\qquad\forall t\in[t_0,\,t_f],~~ p\in\mathcal{P}_{ij},~~(i,\,j)\in\mathcal{W}
$$

\item[Step 3] Terminate the algorithm with output $h^*\approx h^k$ if
$$
\left\|h^{k+1}-h^{k}\right\|_{L^2}\Big/ \left\|h^k\right\|_{L^2}~\leq~\epsilon
$$
where $\epsilon\in\mathbb{R}_{++}$ is a prescribed termination threshold, and the norm $\|\cdot\|_{L^2}$ is defined in \eqref{ipdef}-\eqref{l2normdef}. Otherwise, set $k=k+1$ and repeat Step 1 through Step 3. 
 
 \end{description}
 \end{framed}

Convergence of the fixed-point algorithm, as in all other algorithms, depends on properties of the principal operator $\Phi^{\varepsilon}$. As pointed out by \cite{FKKR} and \cite{Mounce}, a sufficient condition for convergence of the fixed-point algorithm is strong monotonicity together with Lipschitz continuity of such an operator.  Monotonicity is a relatively strong assumption that may hold in some very specific circumstances \citep{Mounce, PR}. It is likely to fail for general networks and traffic dynamics; for example, non-monotonicity of the delay operator for the Vickrey model is shown in \cite{MS2007}. Thus, the fixed-point method, along with the two methods presented in subsequent sections, should be considered as heuristics when convergence cannot be rigorously assured by the underlying network model. Their effectiveness in computing VT-BR-DUE solutions with satisfactory convergence will be demonstrated in our numerical examples.

To articulate the convergence condition, we introduce the following definitions.
\begin{definition}
The operator $\Phi^{\varepsilon}$ is Lipschitz continuous on $\Lambda_1$ if there exists $K>0$ such that
$$
\left\|\Phi^{\varepsilon}(h^1)  - \Phi^{\varepsilon}(h^2)\right\|_{L^2}~\leq~K \left\|h^1-h^2\right\|_{L^2}\qquad\forall h^1,\,h^2\in\Lambda_1
$$
\end{definition}

\begin{definition}
The operator $\Phi^{\varepsilon}$ is strongly monotone if there exists $\delta >0$ such that
$$
\left<\Phi^{\varepsilon}(h^1) -\Phi^{\varepsilon}(h^2),~~h^1-h^2\right>~\geq~\delta \left\|h^1-h^2\right\|^2_{L^2}\qquad\forall h^1,\,h^2\in\Lambda_1
$$
\end{definition}

\begin{theorem}\label{thmfpaconvgence}{\bf (Convergence of the fixed-point algorithm)}
Let the operator $\Phi^{\varepsilon}$ be Lipschitz continuous and strongly monotone. Then the sequence $\{h^{k}\}\subset\Lambda_1$ generated by the algorithm converge strongly to a VT-BR-DUE solution $h^*$.
\end{theorem}
\begin{proof}
The proof is very similar to that of Theorem 4 in \cite{FKKR}, and is omitted here.
\end{proof}

\subsection{Self-adaptive projection method based on the VI formulation}
We present a self-adaptive projection method for VT-BR-DUE problems based on its VI formulation. Such a projection method is originally proposed by \cite{HL2002} for solving generic VIs, and relies on pseudo monotonicity of the principal operator, which is less restrictive than strong monotonicity.

We begin with some notations. As before, $P_{\Lambda}[\cdot]$ denotes the minimum-norm projection onto the convex set $\Lambda\subset\big(L^2[t_0,\,t_f]\big)^{|\mathcal{P}|}$. Define the residual 
\begin{equation}\label{algresidual}
r(h;\,\beta)~\doteq~h-P_{\Lambda}\left[h-\beta\Phi^{\varepsilon}(h)\right]\qquad h\in\Lambda,~\beta>0
\end{equation}
\noindent Notice that the residual $r(h,\,\beta)$ is  zero if and only if $h$ is a solution of the VI, which is a consequence of the fixed-point equivalence result \eqref{brduefp}.  Given $\alpha,\,\beta>0$, let 
\begin{align}\label{algd}
d(h; \alpha, \beta)~\doteq~&\alpha r(h;\beta) +\beta \Phi^{\varepsilon}\left(h-\alpha r(h;\beta)\right)
\\
\label{algg}
g(h;\alpha,\beta)~\doteq~&\alpha\Big[r(h; \beta)-\beta\Big(\Phi^{\varepsilon}(h)-\Phi^{\varepsilon}(h-\alpha r(h; \beta))\Big)  \Big]
\\
\rho(h;\alpha, \beta)~\doteq~&{\left<r(h;  \beta),\, g(h; \alpha, \beta)\right>\over \left\|d(h; \alpha, \beta)\right\|_{L^2}^2}
\end{align}
where $\left<\cdot,\,\cdot\right>$, defined in \eqref{ipdef}, is the inner product in the space $\big(L^2[t_0,\,t_f]\big)^{|\mathcal{P}|}$ .\\

\begin{framed}
\noindent {\bf Self-adaptive projection method}
\begin{description} 

\item[Step 0] Choose fixed parameters $\mu\in (0,\,1),\,\gamma\in(0,\,2),\,\theta>1$, and $L\in(0,\,1)$. Let $\epsilon>0$ be the termination threshold. Identify an initial feasible solution $h^0\in\Lambda$ and set iteration counter $k=0$. Let $\alpha_k=1$.

\item[Step 1] Set $\beta_k=\min\{1,\,\theta \alpha_k\}$. Compute the residual $r(h^k; \beta_k)$ according to \eqref{algresidual}. If ${\left\|r(h^k; \beta_k)\right\|_{L^2}\over \|h^k\|_{L^2}}\leq \epsilon$, terminate the algorithm; otherwise, continue to Step 2.

\item[Step 2]  Find the smallest non-negative integer $m_k$ such that $\alpha_{k+1}\doteq\beta_k \mu^{m_k}$ satisfies
\begin{equation}\label{algfindint}
\beta_k\left\|\Phi^{\varepsilon}(h^k)-\Phi^{\varepsilon}\big(h^k-\alpha_{k+1}r(h^k; \beta_k)\big)\right\|_{L^2}~\leq~L\left\|r(h^k; \beta_k)\right\|_{L^2}
\end{equation}

\item[Step 3] Compute 
\begin{equation}
h^{k+1}~=~P_{\Lambda}\big[  h^k-\gamma\rho(h^k; \alpha_{k+1}, \beta_k)d(h^k; \alpha_{k+1}, \beta_k)  \big]
\end{equation}
Set $k=k+1$ and go to Step 1.
 
 \end{description}
 \end{framed}

 \eqref{algfindint} requires evaluation of $\Phi^{\varepsilon}$ at the point $h^k-\alpha_{k+1} r(h^k; \beta_k)$. We need to show this point always belongs to $\Lambda$ so that the dynamic network loading procedure can be properly carried out. Indeed, $r(h^k; \beta_k)=h^k-P_{\Lambda}[h^k-\beta_k\Phi^{\varepsilon}(h^k)]$, thus 
$$
h^k-\alpha_{k+1} r(h^k; \beta_k)~=~(1-\alpha_{k+1})h^k+\alpha_{k+1}P_{\Lambda}[h^k-\beta_k\Phi^{\varepsilon}(h^k)]\in \Lambda
$$
\noindent since both $h^k$ and $P_{\Lambda}[h^k-\beta_k\Phi^{\varepsilon}(h^k)]$ belong to the convex set $\Lambda$.

In Step 2 of the self-adaptive projection algorithm,  one is required to test a range of integers, starting from zero, in order to find the smallest integer $m_k$. We show below that such a procedure can always terminate within finite number of trials. Notice that $\Phi^{\varepsilon}$ is a continuous operator and that $\alpha_{k+1} \to 0$ as $m_k \to +\infty$. Thus there exists $N>0$ such that for every $m_k>N$ there holds
$$
\left\|\Phi^{\varepsilon}(h^k)-\Phi^{\varepsilon}(h^k-\alpha_{k+1}r(h^k; \beta_k))\right\|_{L^2}~\leq~{L\epsilon\over \beta_k}~\leq~{L\left\|r(h^k; \beta_k)\right\|_{L^2}\over \beta_k}
$$
\noindent which is \eqref{algfindint}. In case $m_k>1$, the algorithm requires more than one evaluation of the operator (that is, more than one dynamic network loading procedure) within an iteration, which is less efficient than the fixed-point algorithm. However, as we show below, convergence of such an algorithm relies on a weaker form of monotonicity than the fixed-point method.

\begin{definition}{\bf (Pseudo monotonicity)} The operator $\Phi^{\varepsilon}$ is pseudo monotone if, for arbitrary $h^1,\,h^2\in\Lambda$, the following holds
\begin{equation}\label{Phipseudodef}
\big<\Phi^{\varepsilon}(h^2),\,h^1-h^2\big>~\geq~0~\Longrightarrow~\big<\Phi^{\varepsilon}(h^1),\,h^1-h^2\big>~\geq~0
\end{equation}
\end{definition}
It is well-known that pseudo monotonicity is a consequence of monotonicity, and thus is one type of generalized monotonicity.  The convergence of the self-adaptive projection algorithm requires the following property of the operator $\Phi^{\varepsilon}$:
\begin{equation}\label{Phiconvcond}
\big<\Phi^{\varepsilon}(h),\, h-h^*\big>~\geq~0\qquad\forall h\in\Lambda
\end{equation}
where $h^*$ is a solution of the original VI. Notice that \eqref{Phiconvcond} holds if $\Phi^{\varepsilon}$ is monotone or pseudo monotone.

We are now ready to state the convergence result, which is due to  \cite{HL2002}.

\begin{theorem}\label{thmprojalgconv}{\bf (Convergence of the self-adaptive projection method)}.  Let $\Phi^{\varepsilon}:\,\Lambda\to \big(L^2[t_0,\,t_f]\big)^{|\mathcal{P}|}$ be continuous and satisfy \eqref{Phiconvcond}. Then the sequence $\{h^k\}$ generated by the algorithm converge to a solution of the VT-BR-DUE problem.
\end{theorem}
\begin{proof}
The proof follows closely the one in \cite{HL2002}. 
\end{proof}

\subsection{Proximal point method based on the VI formulation}

The proximal point method (PPM) \citep{Konnov} replaces the original VI problem with a sequence of regularized VI problems, each of which may be solved with standard algorithms (such as the projection algorithm) due to improved regularity. The algorithm is summarized as follows.

\begin{framed}
\noindent {\bf Proximal point method}
\begin{description} 

\item[Step 0] Identify an initial feasible solution $h^0\in\Lambda$. Fix a large constant $a>0$ and a parameter $\delta>0$. Set the iteration counter $k=0$. 

\item[Step 1] Solve the following variational inequality for $h^{k+1}$: 
\begin{equation}\label{ppmvi}
\big<\Phi^{\varepsilon}(h^{k+1}) + a(h^{k+1}-h^k)~,~  h - h^{k+1}\big>~\geq~0\qquad\forall h\in\Lambda
\end{equation}

\item[Step 2] Terminate the algorithm if $\|h^{k+1}-h^k\|_{L^2} \leq {\delta\over a D}$, where $D$ is the diameter of the set $\Lambda$.  Otherwise, set $k=k+1$ and repeat Step 1 through Step 2. 
  \end{description}
 \end{framed}

In the above algorithm, the parameter $\delta$ will be used in the convergence analysis.  The key step of the PPM is to solve the VI \eqref{ppmvi}, which  enjoys a significantly improved regularity than the original VI problem. To see this, we rewrite $\Phi^{\varepsilon}(h^{k+1}) + a(h^{k+1}-h^k)$  as $(\Phi^{\varepsilon}+a I)(h^{k+1}) -a h^k$, where $I$ is the identity map. If $\Phi^{\varepsilon}$ is weakly monotone with constant $-K$ \footnote{All Lipschitz continuous operators are weakly monotone; see \cite{E-DUE} for a proof.}, that is,
\begin{equation}\label{weakmono}
\left<\Phi^{\varepsilon}(h^1)-\Phi^{\varepsilon}(h^2)~,~h^1-h^2\right>~\geq~-K \left\|h^1-h^2\right\|_{L^2}^2
\end{equation}
\noindent then $(\Phi^{\varepsilon}+a I)(h^{k+1}) -a h^k$ is a strongly monotone operator acting on $h^{k+1}$ provided that $a>K$. Thus, by choosing $a$ large enough, the VI \eqref{ppmvi} can be solved with any existing algorithm with satisfactory convergence result. 

We now provide a convergence result for the PPM, which is due to \cite{E-DUE}. To prepare for this result, we need the notion of semistrictly quasi monotonicity.

\begin{definition}{\bf (Semistrictly quasi monotone)}
The operator $\Phi^{\varepsilon}$ is quasi monotone if, for arbitrary $h^1,\,h^2\in\Lambda$, 
$$
\left<\Phi^{\varepsilon}(h^2)~,~h^1-h^2\right>>0~\Longrightarrow~\left<\Phi^{\varepsilon}(h^1)~,~h^1-h^2\right>\geq0
$$
The operator $\Phi^{\varepsilon}$ is semistrictly quasi monotone if it is quasi monotone and, for every $h^1,\,h^2\in\Lambda$,  
$$
\left<\Phi^{\varepsilon}(h^2)~,~h^1-h^2\right>>0~\Longrightarrow~\left<\Phi^{\varepsilon}(h^3)~,~h^1-h^2\right>>0 
$$
for some $h^3\in \Big\{h:~h=h^1+\lambda(h^2-h^1),~~\lambda\in(0,\,1/2) \Big\}$.
\end{definition}
The following proposition states that semistrictly quasi monotonicity is weaker than pseudo monotonicity.
\begin{proposition}
If an operator is pseudo monotone, then it is semistrictly quasi monotone.
\end{proposition}
\begin{proof}
See Lemma 3.1 of \cite{Konnov1998}.
\end{proof}
We are now ready to present the convergence result.
\begin{theorem}{\bf (Convergence of the PPM)}\label{ppmthm}
Assume that  $\Phi^{\varepsilon}$ is continuous and semistrictly quasi monotone. Let the set $\Lambda$ be bounded with diameter $D<\infty$. Then for any $\delta>0$, when the PPM algorithm terminates, i.e. when $\|h^{k+1}-h^k\|_{L^2}\leq {\delta\over aD}$ for the first time, then
\begin{equation}\label{ppmconveqn2}
\big<\Phi^{\varepsilon}(h^{k+1})~,~h-h^{k+1}\big>~\geq~-\delta \qquad\forall h\in\Lambda
\end{equation}
\end{theorem}
\begin{proof}
The reader is referred to \cite{E-DUE} for a proof.
\end{proof}

\begin{remark}
Unlike the convergence results established for the previous two methods, which focus on the asymptotic behavior of the generated sequence $\{h^k\}$ as $k\to\infty$,  the convergence result developed in Theorem \ref{ppmthm} is concerned with finding a solution of the approximate VI (that is, with $-\delta$ on the right hand side) within finite iteration. Such a convergence result is quite practical for numerical computations as it searches for a solution of the approximate VI within finite number of iterations.
\end{remark}

\section{Numerical studies}\label{secnumerical}
In this section the three computational methods proposed previously are tested in terms of solution quality, convergence, and computational efficiency. Three test networks with varying sizes are considered. For the dynamic network loading procedure, we employ the link transmission model \citep{LTM} with vehicle spillback explicitly captured. All of the computations reported were performed on a standard laptop with 2.7 GHz processor and 4 GB RAM.

\subsection{The seven-arc, six-node network}
Our first example is meant to illustrate the VT-BR-DUE solution in which the user tolerances are endogenous, depending on the actual (realized) departure rates.  In particular, we will demonstrate how such dependence affects the final outcome of the model.

\begin{figure}[h!]
\centering
\includegraphics[width=.9\textwidth]{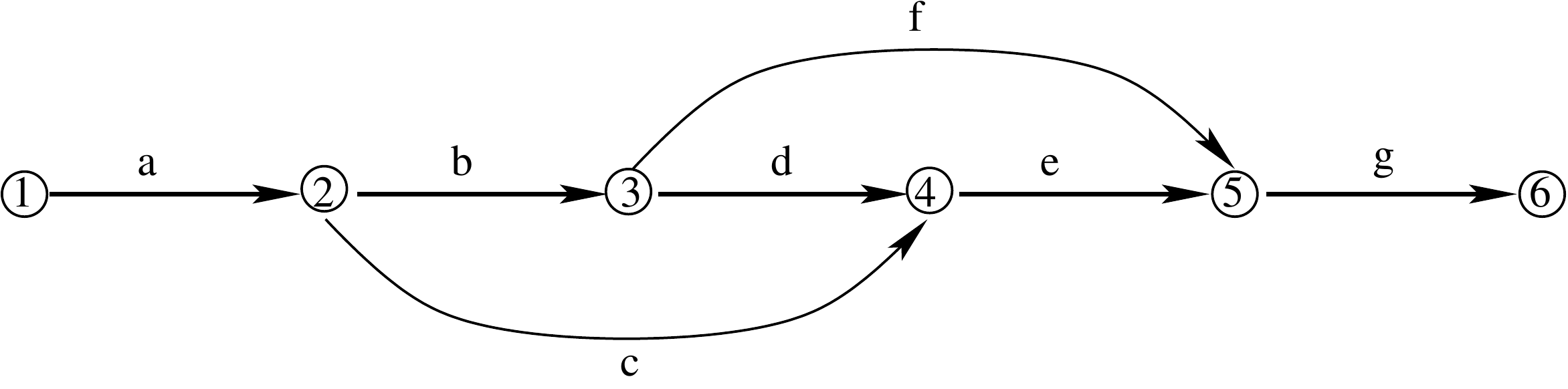}
\caption{\small The seven-arc, six-node test network}
\label{figthreepaths}
\end{figure}

The test network shown in Figure \ref{figthreepaths} has one O-D pair $(1,\,6)$, and three paths $p_1=\{a,\,c,\,e,\,g\}$, $p_2=\{a,\,b,\,d,\,e,\,g\}$, $p_3=\{a,\,b,\,f,\,g\}$.  The fixed travel demand is 2000 (in vehicle). We consider two cases wherein the following two sets of tolerance functions are considered:
\begin{itemize}
\item[] [Case I] ~$\varepsilon_1=\varepsilon_3\equiv 0.1$,~ $\varepsilon_2=0.15\left(1-{100\over 100+V_2}\right)$ 

\item[] [Case II] ~$\varepsilon_1=\varepsilon_3\equiv0.1$, ~$\varepsilon_2=0.2\left(1-{100\over 100+V_2}\right)$
\end{itemize}
\noindent where $\varepsilon_i$ denotes the user tolerance associated with path $p_i$, $i=1,\,2,\,3$; and $V_2\doteq\int_{t_0}^{t_f}h_{p_2}(t)\,dt$ is the total traffic volume on path $p_2$. In both cases, the cost tolerance $\varepsilon_2$ is a functional of $h_{p_2}(\cdot)$; in fact, it is an increasing function of $V_2$. In addition, $\varepsilon_2(\cdot)$ is bounded from above by a fixed constant; see Figure \ref{figepsilonfcn} for these functional forms. Notice that Case II yields a higher tolerance along $p_2$ than Case I, provided the same value of $V_2$. By comparing the two cases, we will show how the slight difference in the tolerance functions manifest itself in the solution. We further note that these chosen functional forms for the tolerances are for illustration purposes only, while further study is clearly needed to formulate and calibrate those specific functional forms in order to accurately capture the network flows. This is beyond the scope of this paper.

\begin{figure}[h!]
\centering
\includegraphics[width=.85\textwidth]{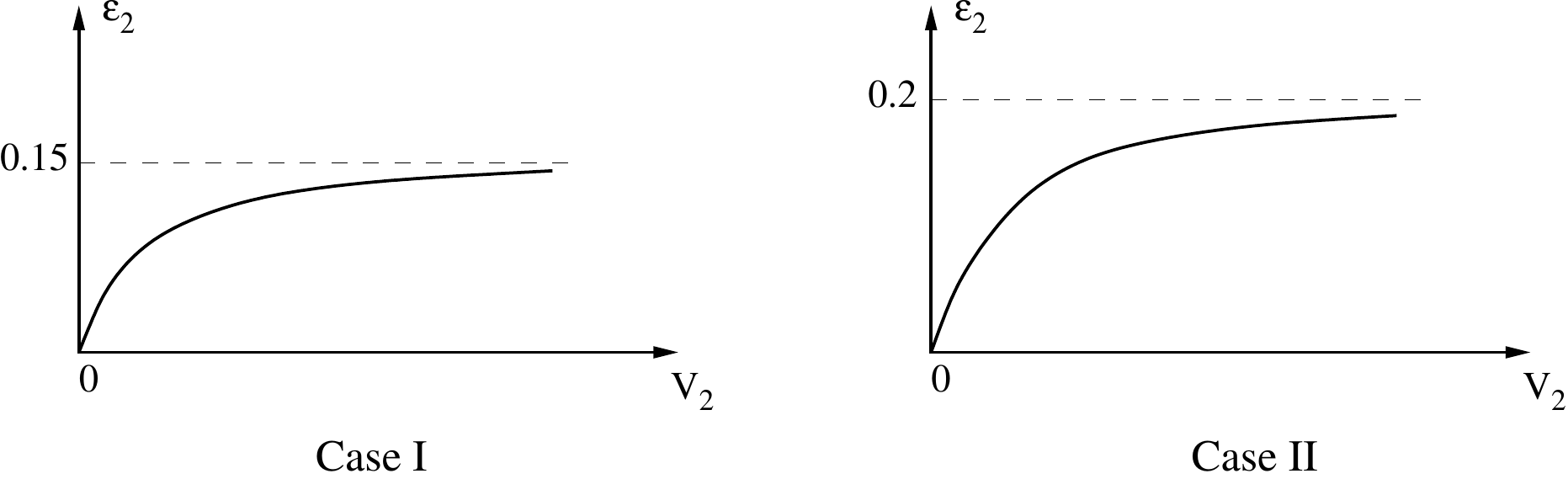}
\caption{\small The VT-BR-DUE problem: the functional forms selected for $\varepsilon_2(\cdot)$.}
\label{figepsilonfcn}
\end{figure}

The VT-BR-DUE problems were solved with the fixed-point algorithm (Section \ref{secFPP}), and the results are displayed in Figure \ref{figsmallsoln}, where we show the departure rates along the three paths and the corresponding effective path delays. In these numerical solutions, the traffic volumes on path $p_2$ are respectively $V_2=1105$ (veh) in Case I and $V_2=1178$ (veh) in Case II. Accordingly, the corresponding tolerances are:
\begin{itemize}
\item[] [Case I] ~$\varepsilon_1=\varepsilon_3=0.1$, ~$\varepsilon_2=0.1376$ 
\item[] [Case II]~ $\varepsilon_1=\varepsilon_3=0.1$, ~$\varepsilon_2=0.1844$ 
\end{itemize}
\noindent The minimum effective delay between the O-D pair, denoted $v_{16}$, and the tolerance thresholds $v_{16}+\varepsilon_i$, $i=1,\,2,\,3$, are shown in Figure \ref{figsmallsoln}. From this figure we see that the computed solutions are indeed solutions of the VT-BR-DUE problems since $h^*_{p_i}(t)>0$ implies that $\Psi(t,\,h^*_{p_i})\leq v_{16}+\varepsilon_i$, $i=1,\,2,\,3$. We also observe that as a result of the chosen forms of $\varepsilon_2(\cdot)$, the total traffic volume on path $p_2$ is smaller in Case I than in Case II. The reason is that, if given the same traffic volume $V_2$, drivers following path $p_2$ have a lower tolerance in Case I than in Case II, and thus more drivers are likely to switch to the other paths in Case I.

Finally, for both computational scenarios, the fixed-point algorithm converged after a finite number of iterations. This can be seen from Figure \ref{figsmallconv}, where the relative gap is expressed by $\left\|h^{k+1}-h^{k}\right\|_{L^2} \big/ \left\|h^k\right\|_{L^2}$.

\begin{figure}[h!]
\centering
\begin{minipage}[c]{0.49\textwidth}
\centering
\includegraphics[width=\textwidth]{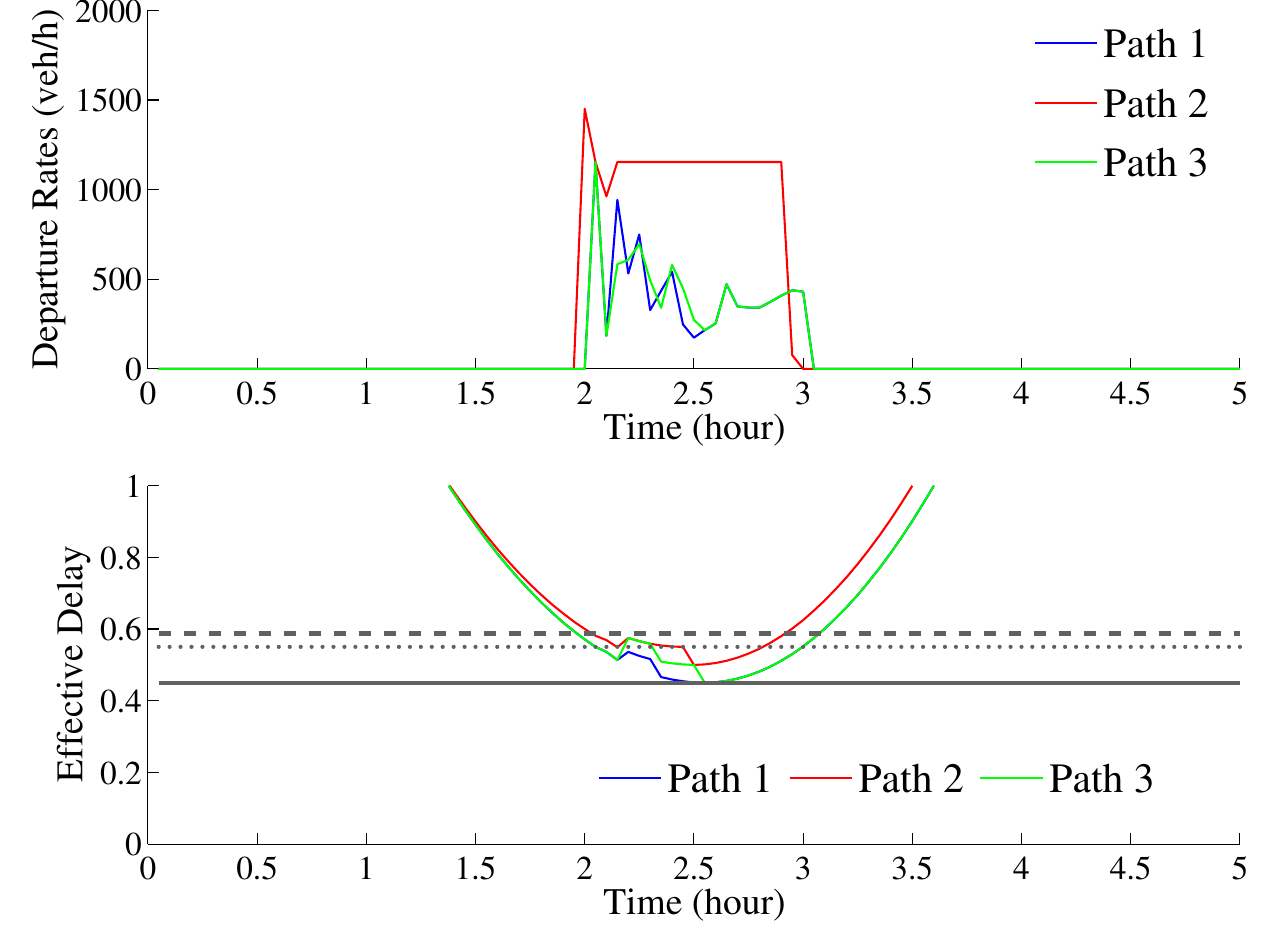}
\end{minipage}
\begin{minipage}[c]{0.49\textwidth}
\centering
\includegraphics[width=\textwidth]{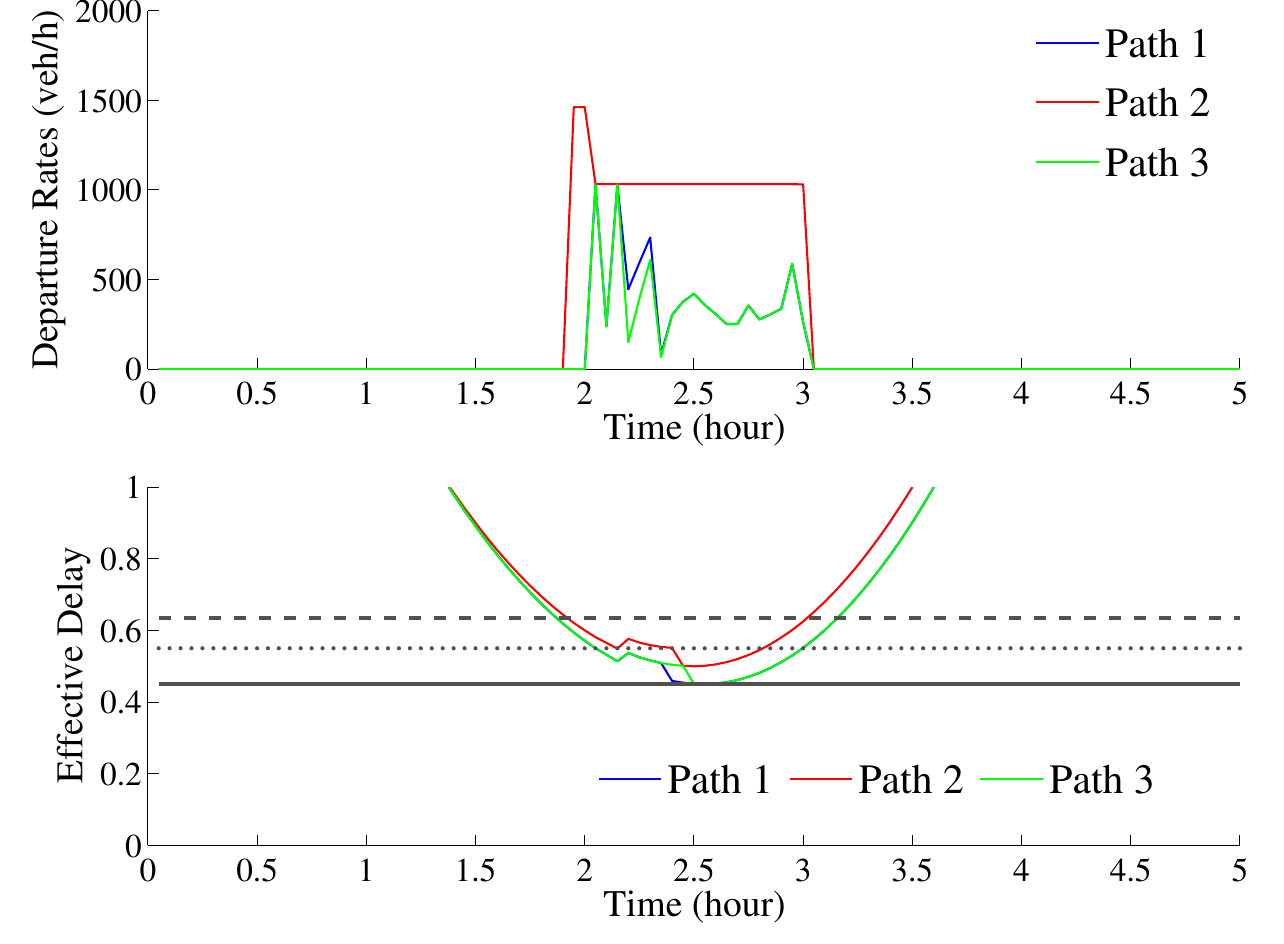}
\end{minipage}
\caption{\small Solutions of the VT-BR-DUE problems in Case I (left) and Case II (right). In the second row, the solid horizontal line represents the minimum effective delay $v_{16}$, the dashed lines represents $v_{16}+\varepsilon_2$, and the dotted lines represent $v_{16}+\varepsilon_1$ (or $v_{16}+\varepsilon_3$).}
\label{figsmallsoln}
\end{figure}

\begin{figure}[h!]
\centering
\includegraphics[width=.8\textwidth]{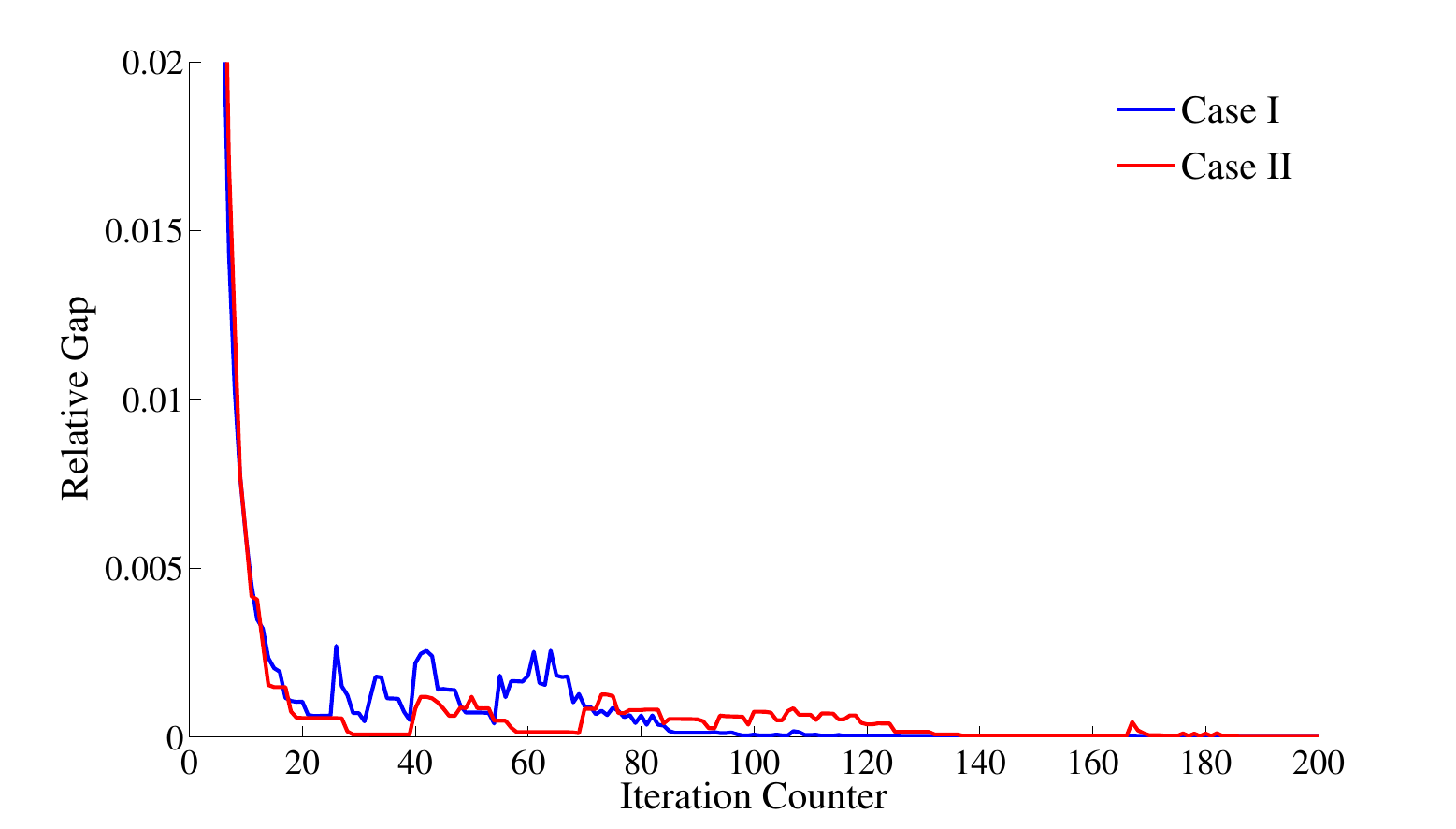}
\caption{\small Convergence of the fixed-point algorithm for the seven-arc, six-node network.}
\label{figsmallconv}
\end{figure}

\subsection{The 19-arc, 13-node network}

The second example evaluates the impact of the {\it fixed} cost tolerance $\varepsilon$ on the final solution of the BR-DUE problem. The numerical results herein demonstrates the sensitivity of the BR-DUE solution to the choices of $\varepsilon$; they also highlight the importance of a well-calibrated indifference band when BR-DUE models are applied.

The second test network is shown in Figure \ref{figmedium}. Four origin-destination pairs, $(1,\,2)$, $(1,\,3)$, $(4,\,2)$, and $(4,\,3)$ are considered, each with a fixed demand of 2000 (veh). There are totally 24 paths among these four O-D pairs. We apply a fixed tolerance $\varepsilon$ for all the O-D pairs and all the paths, so that the problem becomes a BR-DUE problem. Two values of $\varepsilon$ are considered: (1) $\varepsilon=0.4$, and (2) $\varepsilon=0.2$. The corresponding BR-DUE solutions are partially illustrated in Figure \ref{figmediumsoln1} for $\varepsilon=0.4$ and in Figure \ref{figmediumsoln2} for $\varepsilon=0.2$. Notice that the {\it revised} effective delays in all these figures refer to the function $\phi_p^{\varepsilon}(\cdot,\,h^*)$ defined in \eqref{smallphidef}. Recall that a BR-DUE solution $h^*$ must solve the following variational inequality
$$
\big<\phi^{\varepsilon}(\cdot,\,h^*),\,h(\cdot)-h^*(\cdot)\big>~\geq~0\qquad \forall h\in\Lambda
$$
\noindent According to our earlier discussion following Corollary \ref{brduevilemma}, we are assured that the solutions presented in Figure \ref{figmediumsoln1} and Figure \ref{figmediumsoln2} are indeed BR-DUE solutions, since $h_p^*(t)>0$ implies that $\phi_p^{\varepsilon}(t,\,h^*)$ is equal and minimal.

\begin{figure}[h!]
\centering
\includegraphics[width=.5\textwidth]{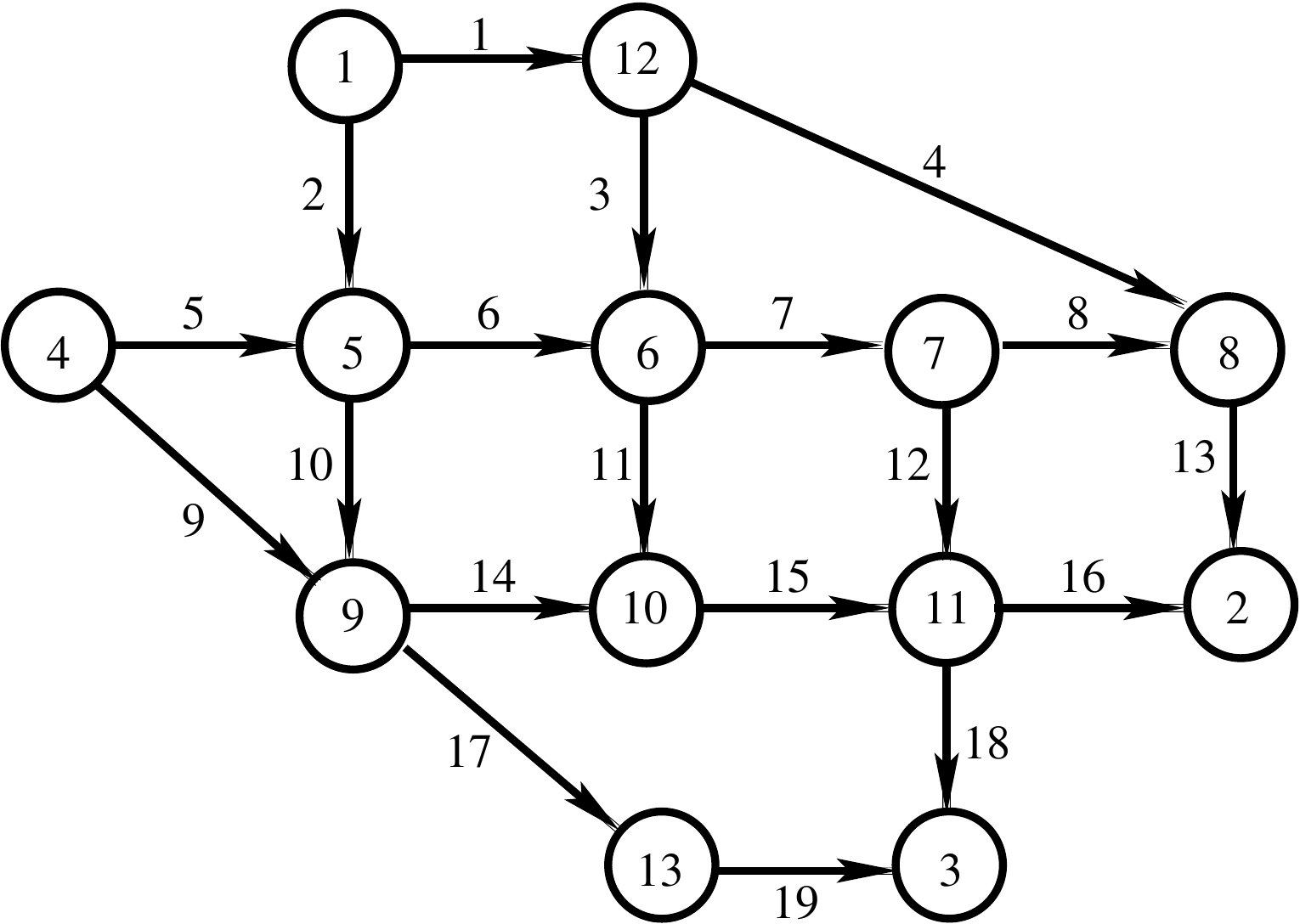}
\caption{\small The 19-arc, 13-node test network}
\label{figmedium}
\end{figure}

\begin{figure}[h!]
\centering
\includegraphics[width=.95\textwidth]{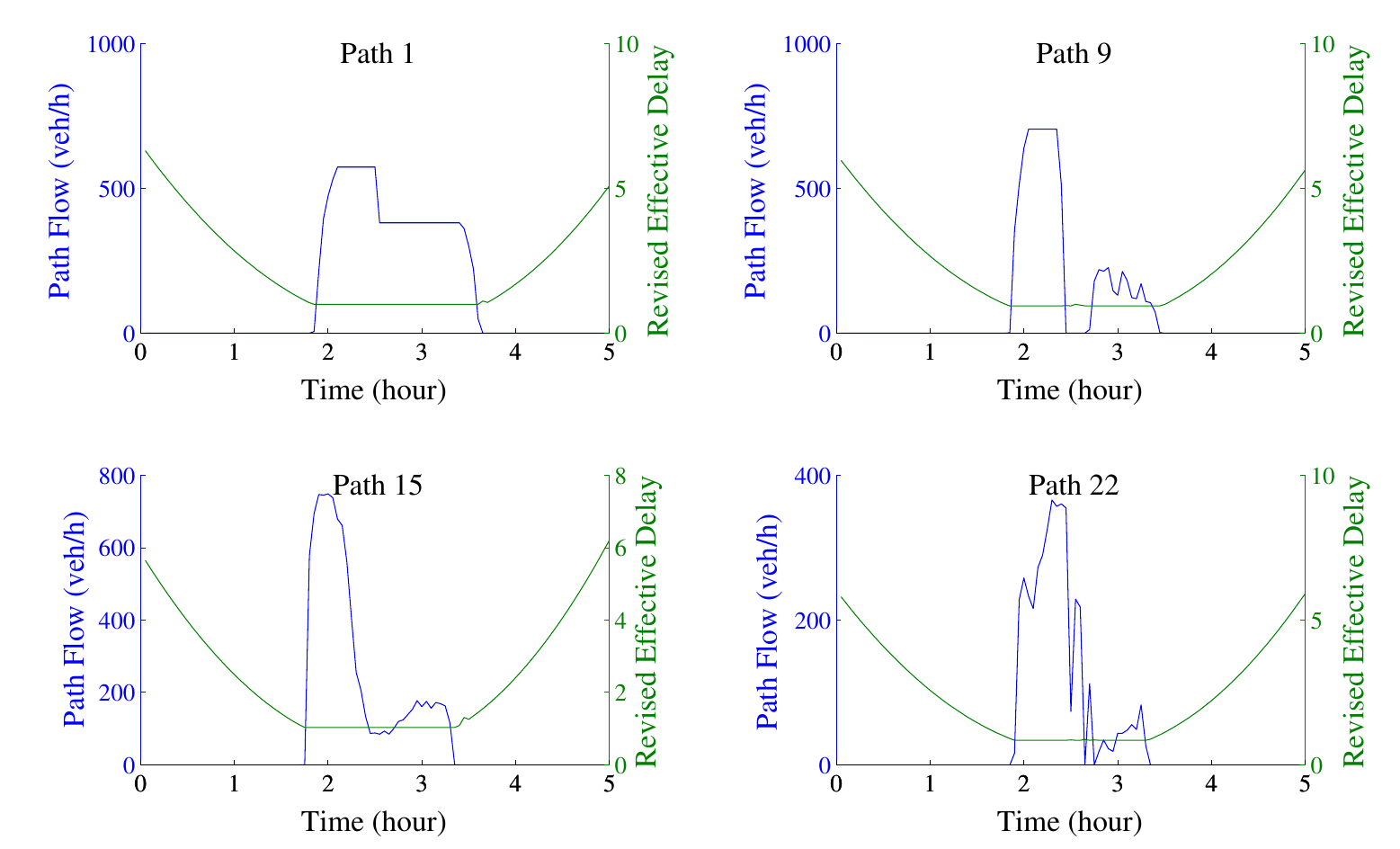}
\caption{\small The 19-arc network with $\varepsilon=0.4$: BR-DUE path departure rates and the corresponding revised effective path delays $\phi^{\varepsilon}$.}
\label{figmediumsoln1}
\end{figure}

\begin{figure}[h!]
\centering
\includegraphics[width=.95\textwidth]{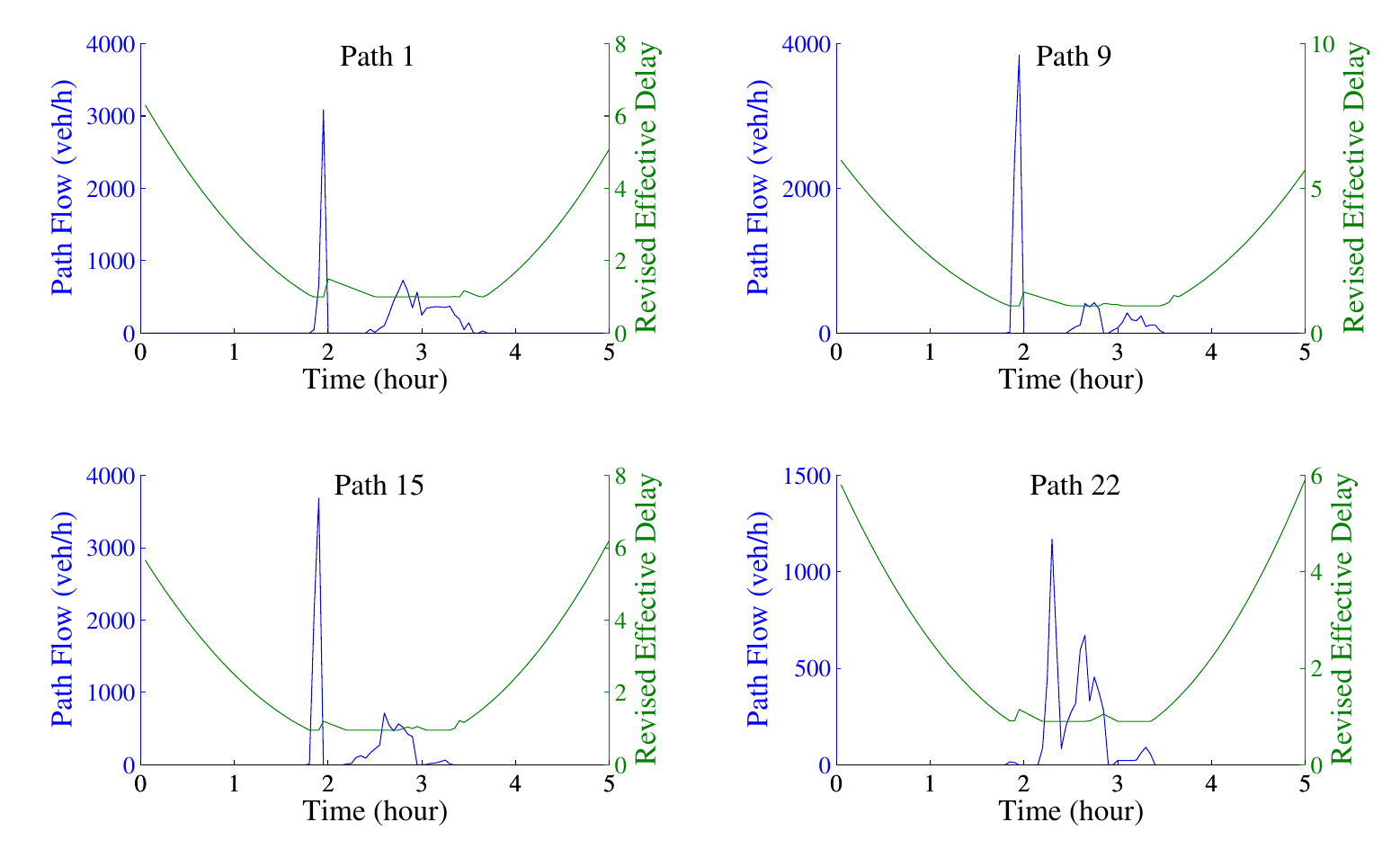}
\caption{\small The 19-arc network with $\varepsilon=0.2$: BR-DUE path departure rates and the corresponding revised effective path delays $\phi^{\varepsilon}$.}
\label{figmediumsoln2}
\end{figure}

We also see from a comparison between Figure \ref{figmediumsoln1} and Figure \ref{figmediumsoln2} that the BR-DUE solutions are very different as a result of different values of $\varepsilon$, indicating a high sensitivity of the solution to the tolerances. Therefore, it is crucial to identify appropriate values of the cost tolerance in order to accurately predict and describe the path departure rates. In addition, as we shall demonstrate later in Section \ref{secsioux}, the larger the value of $\varepsilon$, the fewer iterations the fixed-point algorithm usually takes to converge.

\subsection{The Sioux Falls network}\label{secsioux}

Our third test network is the 76-arc, 24-node Sioux Falls network illustrated in Figure \ref{figsioux2}. In this network we consider six origin-destination pairs: $(1,\,20)$, $(2,\,20)$, $(3,\,20)$, $(4,\,20)$, $(5,\,20)$ and $(6,\,20)$, among which 119 paths are selected.

\begin{figure}[h!]
   \centering
 \includegraphics[width=.4\textwidth]{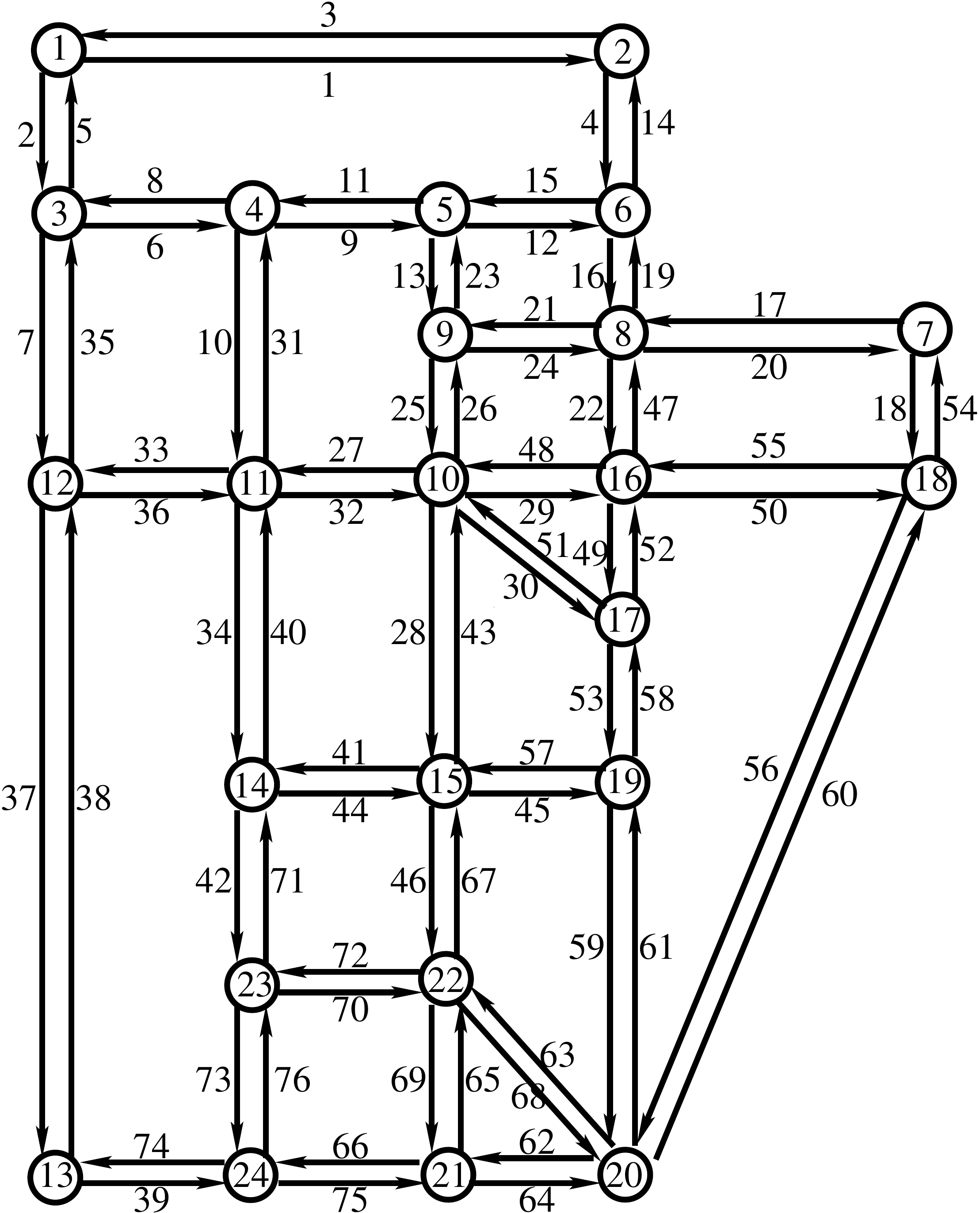}
   \caption{\small The Sioux Falls network}
   \label{figsioux2}
\end{figure}

The main purpose of this numerical study is to evaluate and compare the performances of the three algorithms proposed in this paper, in terms of their convergence and computational efficiency. To make our numerical study more comprehensive, we also consider the seven-arc network and 19-arc network as our test networks. Moreover, we will compare the convergence results with varying values of $\varepsilon$. To simplify the problem, we assume that the cost tolerances are fixed  and equal for all O-D pairs and paths.  The following termination criteria are employed for the three methods:
\begin{align}
&{\|h^{k+1}-h^k\|_{L^2}\over \|h^k\|_{L^2}}\leq10^{-4} ~~\text{(fixed-point method; proximal point method)} \label{relgapdef1}
\\
& {\left\|r(h^k; \beta_k)\right\|_{L^2}\over \|h^k\|_{L^2}}\leq 10^{-4}~~\text{(self-adaptive projection method)}  \label{relgapdef2}
\end{align}

\begin{figure}[h!]
   \centering
 \includegraphics[width=\textwidth]{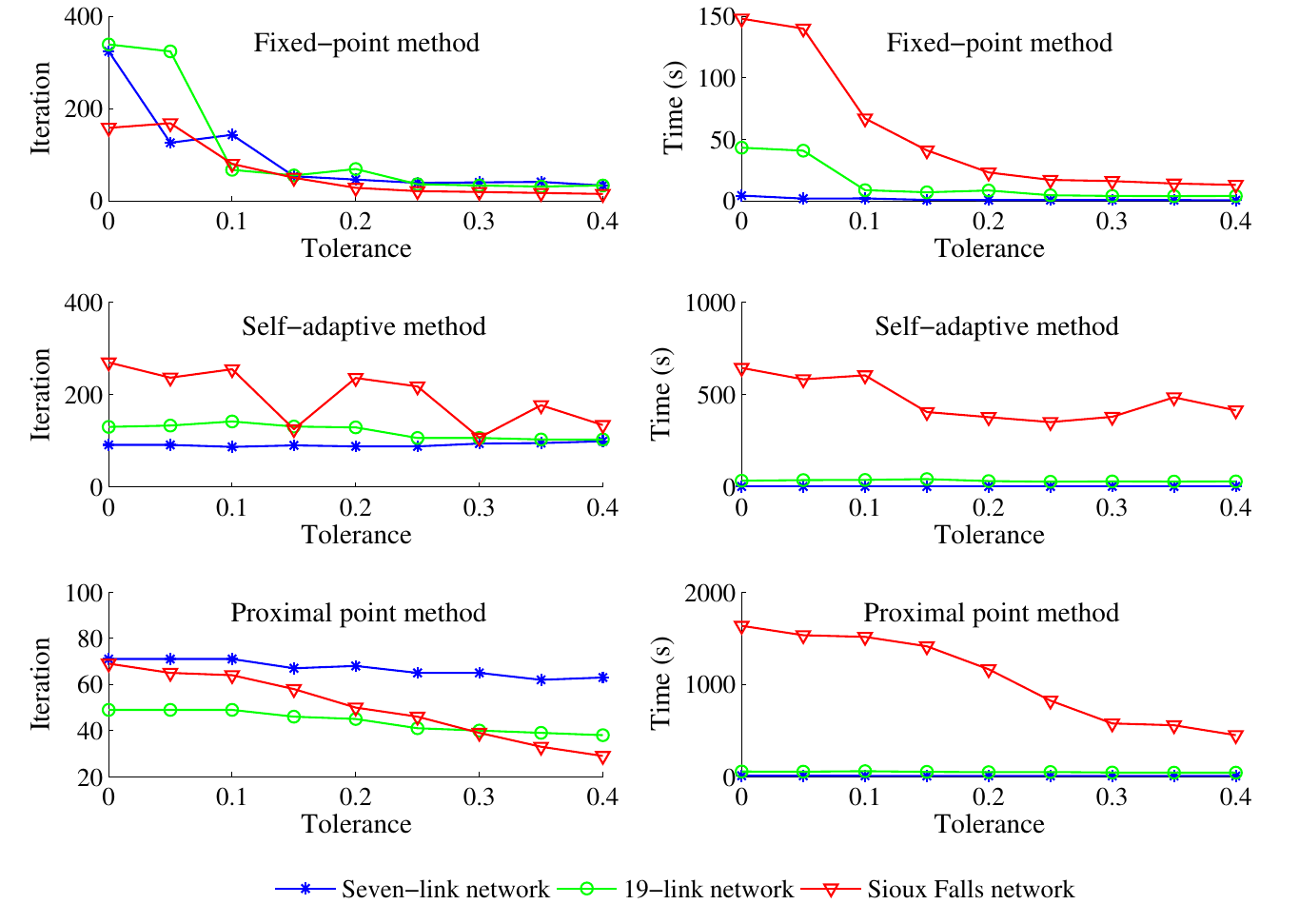}
   \caption{\small Comparison of the three algorithms on the three test networks with a range of $\varepsilon$.}
   \label{figY}
\end{figure}

The convergence results and the computational times of the three algorithms implemented on different networks are shown in Figure \ref{figY}, where we consider a range of values for the tolerance $\varepsilon$. The three algorithms show qualitatively different convergence trends.  For the fixed-point algorithm and the proximal point method, as the tolerance $\varepsilon$ becomes larger, both algorithms take fewer iterations to converge, although the iterations needed by the proximal point method is less sensitive to the tolerance than the fixed-point method. This is understood given that the larger the tolerance, the more likely the traffic is equilibrated. For the self-adaptive projection method, however, there is no discernible effect of $\varepsilon$ on the convergence of the algorithm, especially for the smaller networks.  We also see, from the first and the third rows of Figure \ref{figY}, that as the network size increases, despite the increase in the dimension of the problem (i.e., the number of paths), the number of iterations required by both algorithms remain roughly the same. This implies good scalability and dimension-free nature of the fixed-point and proximal point methods.

With regard to the computational time, the three algorithms differ significantly. In particular, for the same network, the computational time of the fixed-point method is proportional to the number of iterations needed. This is obviously due to the fact that each fixed-point iteration requires exactly one DNL procedure. When the computational times of the fixed-point algorithm are compared across different networks, the larger the network the more time needed even if the iteration numbers remain more or less the same. This is because the DNL procedure for larger networks consumes more time. For the self-adaptive method and the proximal point method, the computational times are disproportional to the iteration numbers; this is because each iteration in these algorithms may require multiple DNL procedures (see Step 2 of the self-adaptive projection method and Step 1 of the proximal point method). As a result,  their computational times are significantly larger than the fixed-point algorithm, especially for the largest network (Sioux Falls). We may conclude that the self-adaptive method and the proximal point method in general take more time to reach a given level of convergence than the fixed-point algorithm, despite the fact that they enjoy more relaxed convergence conditions than the latter. This highlights the potential trade-off between improved convergence and computational efficiency.

To further analyze the convergence patterns of these three methods, we shown in Figure \ref{figSiouxconv} the relative gaps defined in \eqref{relgapdef1}-\eqref{relgapdef2} at each iteration. This figure is based on the calculation on the Sioux Falls network with tolerance $\varepsilon=0.2$. We see that the self-adaptive projection and the proximal point method have smoother convergence than the fixed-point method. The latter, however, has a faster convergence rate. Overall, the fixed-point method and the proximal point method converge faster than the self-adaptive projection method within \underline{finite iterations}, and they are able to achieve a relative gap of $10^{-6}$ within 300 iterations. However, distinctions need to be made between the asymptotic convergence of an algorithm, and the convergence to a given precision within finite iterations. Most existing convergence results are established in the former sense, while only a few (such as Theorem \ref{ppmthm}) are concerned with convergence to an approximate solution in a computationally practical way.

\begin{figure}[h!]
   \centering
 \includegraphics[width=.9\textwidth]{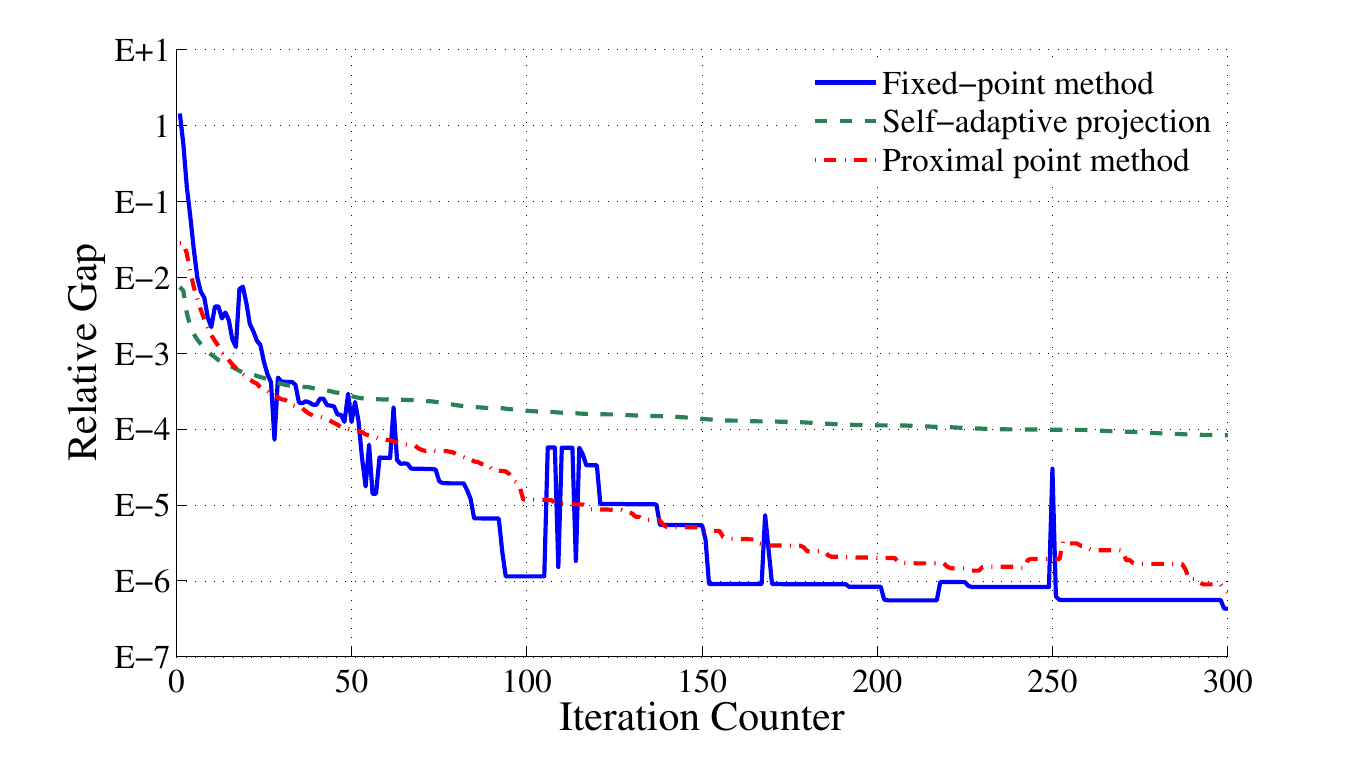}
   \caption{\small Convergence of the three algorithms on the Sioux Falls network with tolerance $\varepsilon=0.2$. On the y-axis E-i means $10^{-i}$.}
   \label{figSiouxconv}
\end{figure}

\section{Concluding remarks}\label{secconclusion}

This paper analyzes the simultaneous route-and-departure-time (SRDT) dynamic user equilibrium (DUE) with bounded rationality (BR). Specifically, we consider DUEs with fixed tolerances (BR-DUE) or endogenously determined tolerances (VT-BR-DUE), where the former is a special case of the latter. This paper makes significant contributions in four areas: problem formulation, existence of solutions, characterization of solution set, and computation.

We first show that the VT-BR-DUE problem is equivalent to a variational inequality (VI) problem using measure-theoretic argument. A key ingredient of the VI formulation is the newly introduced principal operator, which simultaneously encapsulates the dynamic network loading sub-model and the heterogenous and possibly endogenous user tolerances. This operator is shown to be well-defined and continuous, which is crucial for the existence and computation of VT-BR-DUE. We then provide an existence result for VT-BR-DUE based on the VI formulation. It is notable that this existence result relies on conditions that are weaker than those ensuring the existence of normal DUEs. Furthermore, this paper is the first to provide a characterization of the solution set of VT-BR-DUEs. In particular, under a discrete-time setting, we show compactness of the solution set, analyze its interior points, and provide an analytical procedure to find infinitely many solutions and construct connected components of the solution set without resort to numerical computations. These results constitute the first characterization of the solution set for DUE problems that incorporate bounded rationality. Finally, the computability of the VT-BR-DUE models is demonstrated with three new algorithms: a fixed-point algorithm, a self-adaptive projection algorithm, and a proximal point algorithm, all of which enjoy rigorous convergence results based on generalized monotonicity, and are assessed in terms of solution quality, convergence, and computational efficiency.

Although this paper is mainly concerned with dynamic modeling, the techniques used to convert a BR-DUE problem into a generic VI form is applicable to static problems, i.e., boundedly rational user equilibrium (BR-UE). In addition, the route-choice dynamic user equilibrium with bounded rationality can be treated in a very similar way; and it is expected that the VI formulation, existence results, and computational methods will become available for this type of problems as well. Due to space limitation, those results are not elaborated here but will be mentioned in future research.

The variable tolerance BR-DUE model considered by this paper needs to be further validated and calibrated using empirical studies, which should identify and distinguish a range of factors that affect users' perception of travel cost and hence describe the indifference band on a more refined and comprehensive level.  Although this paper does not provide direct empirical evidence of the variable tolerances, which is too much for the current paper, it provides a general modeling platform with significant theoretical and computational results to facilitate future studies on bounded rationality.

\section{Acknowledgement} 
The work described in this paper was partially supported by a grant from the Research Grants Council of the Hong Kong Special Administrative Region, China (HKU 716312E), a grant (201311159123) from the University Research Committee from the University of Hong Kong, and a grant from the National Natural Science Foundation of China (71271183). The authors are grateful to the three reviewers for their constructive comments

%% The Appendices part is started with the command \appendix;
%% appendix sections are then done as normal sections

 \appendix

 \section{Proof of  Theorem \ref{vtbrduevithm}}\label{secapp1}

\begin{proof}
(i) [Necessity] Let $h^*\in\Lambda$ be a VT-BR-DUE solution, then for any  $h\in\Lambda$ and any $(i,\,j)\in\mathcal{W}$,
\begin{align}
&\sum_{(i,\,j)\in\mathcal{W}}\sum_{p\in\mathcal{P}_{ij}}\int_{t_0}^{t_f}\Phi^{\varepsilon}_p(t,\,h^*)\,h_p(t)\,dt~\geq~\sum_{(i,\,j)\in\mathcal{W}}\sum_{p\in\mathcal{P}_{ij}}\int_{t_0}^{t_f} \mu^{\varepsilon}_{ij}(h^*)\,h_p(t)\,dt \nonumber
\\
\label{eqn1}
~=~&\sum_{(i,\,j)\in\mathcal{W}}\mu^{\varepsilon}_{ij}(h^*)\sum_{p\in\mathcal{P}_{ij}}\int_{t_0}^{t_f}h_p(t)\,dt~=~\sum_{(i,\,j)\in\mathcal{W}}\mu^{\varepsilon}_{ij}(h^*)Q_{ij}
\end{align}
where 
\begin{equation}\label{mudef1}
\mu^{\varepsilon}_{ij}(h^*)~\doteq~\min_{p\in\mathcal{P}_{ij}}\left\{\mu^{\varepsilon}_p(h^*)\right\},\qquad \mu^{\varepsilon}_p(h^*)~\doteq~\underset{t\in[t_0,\,t_f]}{\hbox{essinf}}\left\{\Phi^{\varepsilon}_p(t,\,h^*)\right\}
\end{equation}

We claim that $\mu^{\varepsilon}_{ij}(h^*)=v_{ij}(h^*)+\min\limits_{q\in\mathcal{P}_{ij}}\left\{\varepsilon^q_{ij}(h^*)\right\}$. Indeed, we first notice from \eqref{Phidef1} that 
\begin{align*}
\mu_p^{\varepsilon}(h^*)&~=~\underset{t\in[t_0,\,t_f]}{\hbox{essinf}}\left\{\max\big\{\Psi_p(t,\,h^*),\,v_{ij}(h^*)+\varepsilon_{ij}^p(h^*)\big\} -\varepsilon_{ij}^{p}(h^*)+\min_{q\in\mathcal{P}_{ij}}\left\{\varepsilon_{ij}^q(h^*)\right\}\right\}
\\
&~=~\underset{t\in[t_0,\,t_f]}{\hbox{essinf}}\Bigg\{\max\big\{\Psi_p(t,\,h^*),\,v_{ij}(h^*)+\varepsilon_{ij}^p(h^*)\big\} \Bigg\}  -\varepsilon_{ij}^{p}(h^*)+\min_{q\in\mathcal{P}_{ij}}\left\{\varepsilon_{ij}^q(h^*)\right\}
\end{align*}
\noindent We then distinguish  two cases. If $p\in\mathcal{P}_{ij}$ is such that $\underset{t\in[t_0,\,t_f]}{\hbox{essinf}}\left\{\Psi_p(t,\,h^*)\right\}\geq v_{ij}(h^*)+\varepsilon_{ij}^p(h^*)$, then 
\begin{equation}\label{rveqn1}
\mu_p^{\varepsilon}(h^*)~=~\underset{t\in[t_0,\,t_f]}{\hbox{essinf}}\left\{\Psi_p(t,\,h^*)\right\}-\varepsilon_{ij}^p(h^*)+\min\limits_{q\in\mathcal{P}_{ij}}\left\{\varepsilon_{ij}^q(h^*)\right\}\geq v_{ij}(h^*)+\min\limits_{q\in\mathcal{P}_{ij}}\left\{\varepsilon_{ij}^q(h^*)\right\}
\end{equation}
\noindent On the other hand, if $p\in\mathcal{P}_{ij}$ is such that $\underset{t\in[t_0,\,t_f]}{\hbox{essinf}}\left\{\Psi_p(t,\,h^*)\right\}< v_{ij}(h^*)+\varepsilon_{ij}^p(h^*)$, then
\begin{equation}\label{rveqn2}
\mu_p^{\varepsilon}(h^*)~=~v_{ij}(h^*)+\varepsilon_{ij}^p(h^*)-\varepsilon_{ij}^{p}(h^*)+\min_{q\in\mathcal{P}_{ij}}\left\{\varepsilon_{ij}^q(h^*)\right\}~=~v_{ij}(h^*)+\min_{q\in\mathcal{P}_{ij}}\left\{\varepsilon_{ij}^q(h^*)\right\}
\end{equation}
\noindent \eqref{rveqn1} and \eqref{rveqn2} combined show that $\mu_{ij}^{\varepsilon}=\min\limits_{p\in\mathcal{P}_{ij}}\left\{\mu^{\varepsilon}_p(h^*)\right\}\geq v_{ij}(h^*)+\min\limits_{q\in\mathcal{P}_{ij}}\left\{\varepsilon_{ij}^q(h^*)\right\}$. Finally, notice that there exists at least one path $p$ such that \eqref{rveqn2} is true (e.g., the path at which the essential infimum $v_{ij}(h^*)$ is attained). We thus conclude that $\mu_{ij}^{\varepsilon}=v_{ij}(h^*)+\min\limits_{q\in\mathcal{P}_{ij}}\left\{\varepsilon_{ij}^q(h^*)\right\}$. Our claim is substantiated.

In view of \eqref{vtbrduedef1} and \eqref{Phidef1}, we perform the following deduction for every $p\in\mathcal{P}_{ij}$:
\begin{align*}
h_p^*(t)~>~0&~\Longrightarrow~\Psi_p(t,\,h^*)~\leq~v_{ij}(h^*)+\varepsilon_{ij}^p(h^*)
\\
&~\Longrightarrow~\Phi_p^{\varepsilon}(t,\,h^*)~=~v_{ij}(h^*)+\varepsilon_{ij}^p(h)-\left(\varepsilon_{ij}^p(h^*)-\min_{q\in\mathcal{P}_{ij}}\left\{\varepsilon_{ij}^q(h^*)\right\}\right)~=~\mu_{ij}^{\varepsilon}(h^*)
\end{align*}
Therefore, according to the non-negativity of $h$, we have
\begin{equation}\label{eqn2}
\sum_{(i,\,j)\in\mathcal{W}}\sum_{p\in\mathcal{P}_{ij}}\int_{t_0}^{t_f} \Phi^{\varepsilon}_p(t,\,h^*)\,h_p^*(t)\,dt~=~\sum_{(i,\,j)\in\mathcal{W}}\sum_{p\in\mathcal{P}_{ij}}\int_{t_0}^{t_f}\mu_{ij}^{\varepsilon}(h^*)\,h_p^*(t)\,dt   
~=~\sum_{(i,\,j)\in\mathcal{W}}\mu^{\varepsilon}_{ij}(h^*)Q_{ij}
\end{equation}
In view of \eqref{eqn1} and \eqref{eqn2}, we have 
$$
\sum_{(i,\,j)\in\mathcal{W}}\sum_{p\in\mathcal{P}_{ij}}\int_{t_0}^{t_f}\Phi_p^{\varepsilon}(t,\,h^*)\,h_p^*(t)\,dt~\leq~\sum_{(i,\,j)\in\mathcal{W}}\sum_{p\in\mathcal{P}_{ij}}\int_{t_0}^{t_f}\Phi_p^{\varepsilon}(t,\,h^*)\,h_p(t)\,dt\qquad \forall h\in \Lambda
$$
\noindent which is recognized as the variational inequality \eqref{newvtbrduevi}. \\

\noindent (ii) [Sufficiency]  Let $h^*\in\Lambda$ be a solution of the variational inequality. Clearly, $\Phi_p^{\varepsilon}(\cdot,\,h)$ is measurable and positive for any $p\in\mathcal{P}$ and any $h\in\Lambda$. We invoke the same proof of Theorem 2 from \cite{Friesz1993} to show that $h^*$ satisfies 
\begin{equation}\label{eqn3}
h_p^*(t)~>~0,~ p\in\mathcal{P}_{ij}~\Longrightarrow~ \Phi_p^{\varepsilon}(t,\,h^*)~=~\mu^{\varepsilon}_{ij}(h^*) \qquad \hbox{for almost every}~t\in[t_0,\,t_f],\quad \forall (i,\,j)\in\mathcal{W}
\end{equation}
where $\mu_{ij}^{\varepsilon}(h^*)$ is given by \eqref{mudef1} and is equal to $v_{ij}(h^*)+\min_{q\in\mathcal{P}_{ij}}\{\varepsilon_{ij}^q(h^*)\}$. We readily deduce that
\begin{align*}
&h_p^*(t)~>~0,~p\in\mathcal{P}_{ij}~\Longrightarrow~\Phi_p^{\varepsilon}(t,\,h^*)~=~v_{ij}(h^*)+\min_{p\in\mathcal{P}_{ij}}\{\varepsilon_{ij}^p(h^*)\}
\\
~\Longrightarrow~&\max\left\{\Psi_p(t,\,h^*),\,v_{ij}(h^*)+\varepsilon_{ij}^p(h^*)\right\}-\varepsilon_{ij}^p(h^*)+\min_{q\in\mathcal{P}_{ij}}\{\varepsilon_{ij}^q(h^*)\}~=~v_{ij}(h^*)+\min_{q\in\mathcal{P}_{ij}}\{\varepsilon_{ij}^q(h^*)\}
\\
~\Longrightarrow~&\max\left\{\Psi_p(t,\,h^*),\,v_{ij}(h^*)+\varepsilon_{ij}^p(h^*)\right\}~=~v_{ij}(h^*)+\varepsilon_{ij}^p(h^*)
\\
~\Longrightarrow~&v_{ij}(h^*)~\leq~\Psi_p(t,\,h^*)~\leq~v_{ij}(h^*)+\varepsilon_{ij}^p(h^*)  
\end{align*}
\noindent for almost every $t\in[t_0,\,t_f]$, $\forall p\in\mathcal{P}_{ij}$, $\forall (i,\,j)\in\mathcal{W}$. Therefore, $h^*$ solves the VT-BR-DUE problem. \end{proof}

\section{Proofs of Proposition \ref{propwelldefine} and Theorem \ref{propcontphi}.}\label{secappcontproof}

\subsection{Proof of Proposition \ref{propwelldefine}}\label{subsecappthmwelldefined}
\begin{proof}
Given any $h\in\Lambda$, by virtue of the effective path delay operator there exists a unique vector-valued function $\big(\Psi_p(\cdot,\,h),\,p\in\mathcal{P}\big)$ of $t$, where $\Psi_p(t,\,h)$ is given by \eqref{cost}. Moreover, such a vector-valued function belongs to the set $\big(L_+^2[t_0,\,t_f]\big)^{|\mathcal{P}|}$. Then, according to \eqref{Phidef1} there exists a unique vector-valued function $\big(\Phi^{\varepsilon}_p(\cdot,\,h),\,p\in\mathcal{P}\big)$ of $t$. Moreover, due to \eqref{epsilonunib} there exists an upper bound $M<+\infty$ for all the functionals $\varepsilon_{ij}^p(\cdot),\,p\in\mathcal{P}_{ij},\,(i,\,j)\in\mathcal{W}$. Then we have, for every $p\in\mathcal{P}$, that 
\begin{align*}
\int_{t_0}^{t_f}\left[\Phi_{p}^{\varepsilon}(t,\,h)\right]^2\,dt&~\leq~\int_{t_0}^{t_f}\left[\Psi_p(t,\,h) + M\right]^2\,dt
\\
&~=~\int_{t_0}^{t_f}\left[\Psi_p(t,\,h)\right]^2\,dt +2M\int_{t_0}^{t_f}\Psi_p(t,\,h)\,dt +(t_f-t_0)M^2~<~+\infty
\end{align*}
Here we have used the fact that a square-integrable function on a compact set is also integrable. Therefore, $\Phi^{\varepsilon}(h)\in\big(L_+^2[t_0,\,t_f]\big)^{|\mathcal{P}|}$. We conclude that the operator $\Phi^{\varepsilon}$ exists and is well defined. 
\end{proof}

\subsection{Proof of Theorem \ref{propcontphi}}\label{subsecappcontproof}

Before we begin the continuity proof, we make a very simple yet crucial observation regarding the  effective path delay  $\Psi_p(\cdot,\,h)$,  viewed as a function of departure time $t$. 
\begin{lemma}
Assuming that {\bf A0} holds. Then under the {\it first-in-first-out} (FIFO) rule, there must hold that
\begin{equation}\label{PsionesideL}
\Psi_p(t_2,\,h)-\Psi_p(t_1,\,h)~\geq~-L_{ij}(t_2-t_1)\qquad \forall p\in\mathcal{P}_{ij},\,\forall h\in\Lambda
\end{equation}
for any $t_0\leq t_1\leq t_2\leq t_f$. Here, $L_{ij}$ is the Lipschitz constant associated with the function $\phi_{ij}$ as articulated in assumption {\bf A0}.
\end{lemma}
\begin{proof}
According to FIFO, a later departure time implies a later arrival time along the same path; we have, for any $t_1\leq t_2$, that
$$
t_1+D_p(t_1,\,h)~\leq~t_2+D_p(t_2,\,h)\qquad \forall p\in\mathcal{P}_{ij},\,\forall h\in\Lambda
$$
According to the alternative representation of the effective path delay and assumption {\bf A0}, we deduce that
\begin{align*}
\Psi_p(t_2,\,h)-\Psi_p(t_1,\,h)=& \phi_{ij}(t_2)+\psi_{ij}\big(t_2+D_p(t_2,\,h)\big) - \phi_{ij}(t_1)-\psi_{ij}\big(t_1+D_p(t_1,\,h)\big)
\\
\geq&-L_{ij}(t_2-t_1) +\psi_{ij}\big(t_2+D_p(t_2,\,h)\big) - \psi_{ij}\big(t_1+D_p(t_1,\,h)\big)~\geq~-L_{ij}(t_2-t_1) 
\end{align*}
\end{proof}

We now begin the proof of Theorem \ref{propcontphi}.

\begin{proof} 
The proof is divided into several parts.

\noindent {\bf Part 1.} Consider an arbitrary O-D pair $(i,\,j)\in\mathcal{W}$.  We show in this part that if $h^1,\,h^2\in\Lambda$ and $\|h^1-h^2\|_{L^2}\to 0$, then the essential infima satisfy $|v_{ij}(h^1)-v_{ij}(h^2)|\to 0$, where these essential infima are defined in \eqref{essinfdef1}-\eqref{essinfdef2}.

By continuity of the effective delay operator, given any $\epsilon>0$, there exists $\delta>0$ such that whenever $h^1, h^2\in\Lambda$ and $\|h^1-h^2\|_{L^2}<\delta$, there holds $\|\Psi(h^1)-\Psi(h^2)\|_{L^2}<{\epsilon^{3/2}\over \sqrt{8L_{ij}}}$, where $L_{ij}$ is the Lipschitz constant from {\bf A0}. Without loss of generality we let $v_{ij}(h^1)\leq v_{ij}(h^2)$. Then we claim that 
\begin{equation}\label{contproofcontr}
v_{ij}(h^1)\geq v_{ij}(h^2)-\epsilon
\end{equation}
We proceed by contradiction. Assume that $v_{ij}(h^1)<v_{ij}(h^2)-\epsilon$, 
and that $v_{ij}(h^1)$ is attained at some time $\hat t\in[t_0,\,t_f]$ for some path $\hat p\in\mathcal{P}_{ij}$. Then, for every $t\in [\hat t-{\epsilon\over 2L_{ij}},\, \hat t]$, according to \eqref{PsionesideL}, 
\begin{align*}
&\Psi_{\hat p}(\hat t,\,h^1)-\Psi_{\hat p}(t,\,h^1)~\geq~-L_{ij}(\hat t-t)~\Longrightarrow~\Psi_{\hat p}(t,\,h^1)~\leq~\Psi_{\hat p}(\hat t,\,h^1)+L_{ij}(\hat t-t)
\\
~=~&v_{ij}(h^1)+L_{ij}(\hat t-t)~<~v_{ij}(h^2)-\epsilon+L_{ij}(\hat t-t)~\leq~v_{ij}(h^2)-\epsilon+L_{ij}\cdot {\epsilon\over 2L_{ij}}
\\
~=~& v_{ij}(h^2)-{\epsilon\over 2}~\leq~ \Psi_{\hat p}(t,\,h^2)-{\epsilon\over 2}
\end{align*}
Thus, we have that
\begin{align*}
{\epsilon^{3/2}\over \sqrt{8L_{ij}}}~>~\|\Psi(h^1)-\Psi(h^2)\|_{L^2}&~=~\left(\sum_{p\in\mathcal{P}}\int_{t_0}^{t_f}\big|\Psi_p(t,\,h^1)-\Psi_p(t,\,h^2)\big|^2\,dt\right)^{1/2}
\\
&~\geq~\left( \int^{\hat t}_{\hat t-{\epsilon\over 2L_{ij}}} \big|\Psi_{\hat p}(t,\,h^1)-\Psi_{\hat p}(t,\,h^2)\big|^2\,dt \right)^{1/2}~\geq~ \left( {\epsilon\over 2L_{ij}} \cdot {\epsilon^2\over 4} \right)^{1/2}
\end{align*}
\noindent which leads to a contradiction. Thus \eqref{contproofcontr} must hold. We have shown that if $\|h^1-h^2\|_{L^2}\to 0$ then $|v_{ij}(h^1)-v_{ij}(h^2)|\to 0$ for every $(i,\,j)\in\mathcal{W}$.\\

\noindent {\bf Part 2.} In this part, we show that if $h^{(n)}$ is a sequence that converges to $h^*$ in the $L^2$-norm, then $\|\Phi^{\varepsilon}(h^{(n)})-\Phi^{\varepsilon}(h^*)\|_{L^2} \to 0$ as $n\to +\infty$, thereby establishing the desired continuity result. Indeed, according to \eqref{Phidef3},  for every $t\in[t_0,\,t_f]$ and every $p\in\mathcal{P}_{ij}$,
\begin{align*}
\big|\Phi_p^{\varepsilon}(t,\,h^{(n)})-\Phi_p^{\varepsilon}(t,\,h^*)\big|~\leq~&\max\Bigg\{\big|\Psi_p(t,\,h^{(n)})-\Psi_p(t,\,h^*)\big|~,~ \big|v_{ij}(h^{(n)})-v_{ij}(h^*)+\varepsilon_{ij}^p(h^{(n)})-\varepsilon_{ij}^p(h^*)\big| \Bigg\}
\\
~+~&\big|\varepsilon_{ij}^p(h^{(n)})-\varepsilon_{ij}^p(h^*)-\min_{q\in\mathcal{P}_{ij}}\big\{\varepsilon_{ij}^q(h^{(n)})\big\}+\min_{q\in\mathcal{P}_{ij}}\big\{\varepsilon_{ij}^q(h^*)\big\}\big|
\\
~\leq~&\big|\Psi_p(t,\,h^{(n)})-\Psi_p(t,\,h^*)\big|+A_{ij}^{(n)}
\end{align*}
where 
$$
A_{ij}^{(n)}\doteq \big|v_{ij}(h^{(n)})-v_{ij}(h^*)+\varepsilon_{ij}^p(h^{(n)})-\varepsilon_{ij}^p(h^*)\big|+\big|\varepsilon_{ij}^p(h^{(n)})-\varepsilon_{ij}^p(h^*)-\min_{q\in\mathcal{P}_{ij}}\big\{\varepsilon_{ij}^q(h^{(n)})\big\}+\min_{q\in\mathcal{P}_{ij}}\big\{\varepsilon_{ij}^q(h^*)\big\}\big|
$$
\noindent tends to zero as $n \to +\infty$ according to the result established in {\bf Part 1}, and the fact that each $\varepsilon_{ij}^p(\cdot)$ is continuous. As a consequence, we deduce that 
\begin{align*}
\big\|\Phi^{\varepsilon}(h^{(n)})-\Phi^{\varepsilon}(h^*)\big\|^2_{L^2}~=~&\sum_{p\in\mathcal{P}}\int_{t_0}^{t_f}\big|\Phi_p^{\varepsilon}(t,\,h^{(n)})-\Phi_p^{\varepsilon}(t,\,h^*)\big|^2\,dt
\\
~\leq~&\sum_{(i,\,j)\in\mathcal{W}} \sum_{p\in\mathcal{P}_{ij}}\int_{t_0}^{t_f} \Big(\big|\Psi_p(t,\,h^{(n)})-\Psi_p(t,\,h^*)\big|+A_{ij}^{(n)}\Big)^2\,dt
\\
~\leq~&\sum_{(i,\,j)\in\mathcal{W}}\sum_{p\in\mathcal{P}_{ij}}\int_{t_0}^{t_f} \big|\Psi_p(t,\,h^{(n)})-\Psi_p(t,\,h^*)\big|^2\,dt
\\
~+~& \sum_{(i,\,j)\in\mathcal{W}}\sum_{p\in\mathcal{P}_{ij}}\int_{t_0}^{t_f}2A_{ij}^{(n)}\big|\Psi_p(t,\,h^{(n)})-\Psi_p(t,\,h^*)\big| +\Big(A_{ij}^{(n)}\Big)^2\,dt
\\
~\to~& 0,\quad\hbox{as } ~n\to +\infty
\end{align*}
\end{proof}

\section{Proof of Theorem \ref{existencethembrdue}}\label{secapp4}
\begin{proof}
We consider, for each natural number $n\geq 1$,  a uniform partition of the compact interval $[t_0,\,t_f]$ into $n$ sub-intervals $I_1,\, \ldots,\,I_n$ with the size of each being $(t_f-t_0)/n$. We then consider the following subsets:
$$
\Lambda^n~\doteq~\left\{ h\in\Lambda:~~ h_p(\cdot) \hbox{ is constant on }~ I_i,\quad\forall i=1,\,\ldots,\, n, \quad \forall p\in\mathcal{P}\right\}\subset\Lambda\qquad\forall n\geq 1
$$
\noindent Notice that each $\Lambda^n$ is the intersection of $\Lambda$ and the space of piecewise constant functions. Since $\Lambda$ is expressed via linear constraints, each set $\Lambda^n$ is clearly convex. In addition, due to the finite-dimensional nature of $\Lambda^n$,  it is also compact. A detailed proof of compactness for $\Lambda^n$ based on sequential compactness has been presented in \cite{existence}, and will be omitted here.

It follows from assumption 2 and Theorem \ref{mainthm} that for each $n\geq 1$, there exists $h^{n, *}\in\Lambda^n$ such that 
\begin{equation}\label{nvi}
\left<\Phi^{\varepsilon}(h^{n,*}),\, h^n-h^{n,*}\right>\geq 0\quad \hbox{or}\quad \sum_{p\in\mathcal{P}}\int_{t_0}^{t_f}\Phi^{\varepsilon}_p(t,\,h^{n,*})\left(h^n_p(t)-h^{n,*}_p(t)\right)\,dt \geq 0, \qquad\forall h^n\in\Lambda^n
\end{equation}
\noindent Since both $h^n$ and $h^{n,*}$ are piecewise constant, it follows from \eqref{nvi} that for any $(i,\,j)\in\mathcal{W}$,
\begin{equation}\label{contra}
h_p^{n,*}(t)>0,~~ t\in I_k ~~\Longrightarrow~~ \int_{I_k}\Phi^{\varepsilon}_p(t,\,h^{n,*})\,dt~=~\min_{q\in\mathcal{P}_{ij}}\min_{l=1,\ldots, n} \int_{I_l} \Phi^{\varepsilon}_q(t,\,h^{n,*})\,dt
\end{equation}
\noindent for all $p\in\mathcal{P}_{ij}$ and $k=1,\,\ldots,\,n$. In other words, within the same origin-destination pair, the integrals of the function $\Phi_p^{\varepsilon}(\cdot,\,h^{n,*})$ for all utilized paths and departure time intervals are equal and minimal. 

In view of assumptions 1 and 3, we choose $n\geq 1$  such that the size of the subintervals, $\delta_n\doteq {t_0-t_f\over n}$, satisfies $\delta_n < {\varepsilon^{min}\over 2\max_{(i,\,j)\in\mathcal{W}} L_{ij}}$. Fixing any origin-destination pair $(i,\,j)\in\mathcal{W}$, we denote by $v_{ij}(h^{n,*})$ the essential infimum of the path effective delays, which is attained at some point $\hat t\in I_l$ corresponding to some path $q\in\mathcal{P}_{ij}$.  Clearly, by taking the time horizon large enough one can always assume that $l>1$. That is, $I_l$ is not the first time interval because the corresponding early arrival penalty would be very large. Then we consider the interval $I_{l-1}$ and deduce the following based on \eqref{PsionesideL}, which is a consequence of {\bf A0},
\begin{multline}
\Psi_q(\hat t,\,h^{n,*})-\Psi_q(t,\,h^{n,*})~\geq~-L_{ij}(\hat t -t )
\\
~\Longrightarrow~\Psi_q(t,\,h^{n,*})~\leq~ v_{ij}(h^{n,*})+L_{ij}(\hat t-t) ~\leq~ v_{ij}(h^{n,*})+2L_{ij}\delta_n~<~ v_{ij}(h^{n,*})+ \varepsilon^{min}
\end{multline}
\noindent for all $t\in I_{l-1}$. We immediately have that $\forall t\in I_{l-1}$,
$$
\Phi^{\varepsilon}_q(t,\,h^{n,*})~=~v_{ij}(h^{n,*})+\varepsilon_{ij}^{q}(h^{n,*})-\left(\varepsilon_{ij}^q(h^{n,*})  -\min_{q'\in\mathcal{P}_{ij}}\{\varepsilon_{ij}^{q'}(h^{n,*})\} \right)~=~v_{ij}(h^{n,*})+\min_{q'\in\mathcal{P}_{ij}}\{\varepsilon_{ij}^{q'}(h^{n,*})\}
$$

\noindent For any $p\in\mathcal{P}_{ij}$ and any interval $I_k$ such that $h_p^{n,*}(t)>0$, $t\in I_k$, \eqref{contra} implies that 
\begin{equation}\label{existenceprooff1}
\int_{I_k}\Phi_p^{\varepsilon}(t,\,h^{n,*})\,dt~\leq~\int_{I_{l-1}}\Phi_q^{\varepsilon}(t,\,h^{n,*})\,dt~=~\delta_n\cdot \left(v_{ij}(h^{n,*})+\min_{q'\in\mathcal{P}_{ij}}\{\varepsilon_{ij}^{q'}(h^{n,*})\}\right)
\end{equation}
On the other hand, by definition 
\begin{equation}\label{existenceprooff2}
\Phi_p^{\varepsilon}(t,\,h^{n,*})~\geq~v_{ij}(h^{n,*})+\varepsilon_{ij}^{p}(h^{n,*})-\left(\varepsilon_{ij}^p(h^{n,*})  -\min_{q'\in\mathcal{P}_{ij}}\{\varepsilon_{ij}^{q'}\} \right)~=~v_{ij}(h^{n,*})+\min_{q'\in\mathcal{P}_{ij}}\{\varepsilon_{ij}^{q'}\}
\end{equation}
\eqref{existenceprooff1} and \eqref{existenceprooff2} together implies that $\Phi_p^{\varepsilon}(t,\,h^{n,*})= v_{ij}(h^{n,*})+\min_{q'\in\mathcal{P}_{ij}}\{\varepsilon_{ij}^{q'}\}$ for almost every $t\in I_k$. Finally, by definition \eqref{Phidef1}, such an equality holds if and only if $\Psi_p(t,\,h^{n,*})\leq v_{ij}(h^{n,*})+\varepsilon_{ij}^p(h^{n,*})$ for almost every $t\in I_k$. Since $(i,\,j)$, $p$ and $I_k$ are arbitrary, we have established that $h^{n,*}$ is a VT-BR-DUE. 
\end{proof}

\section{Proofs of Lemma \ref{lemmabarphicont}, Proposition  \ref{thmclosedness}, Proposition \ref{propsamesupport}, and Proposition \ref{propconnected}}\label{secappchara}

\subsection{Proof of Lemma \ref{lemmabarphicont}}\label{subsecappbarcont}
\begin{proof}
For each $n\geq 1$, fix any point $\bar h\in\bar\Lambda^n$, and consider an arbitrary sequence $\bar h^{(m)}\in \bar\Lambda^n$, $m\geq 1$, that converge to $\bar h$ in the Euclidean norm. We let $h\in \Lambda^n$ and $h^{(m)}\in\Lambda^n$ be the continuous-time counterparts of $\bar h$ and $\bar h^{(m)}$, respectively. It is easy to verify that $h^{(m)}\to h$ in the $L^2$-norm in $\big(L^2[t_0,\,t_f]\big)^{|\mathcal{P}|}$. Thus, according to the continuity of $\Psi$, we have
$$
\left\|\Psi(h)-\Psi(h^{(m)}) \right\|^2_{L^2}~=~\sum_{p\in\mathcal{P}}\int_{t_0}^{t_f}\left|\Psi_p(t,\,h)-\Psi_p(t,\,h^{(m)})\right|^2\,dt ~\to~0,\quad\hbox{as } m\to +\infty
$$
\noindent Next, in order to show that $\bar\Psi$ is continuous it suffices to verify that $\bar \Psi_p(k,\,\bar h^{(m)})\to \bar \Psi_p(k,\,\bar h)$ for every $p\in\mathcal{P}$ and $1\leq k\leq n$, as $m$ tends to infinity. Indeed, recalling from \eqref{barPsipkdef} that
\begin{align*}
\left|\bar\Psi_p(k,\,\bar h) -\bar\Psi_p(k,\,\bar h^{(m)})\right|~=~&{1\over |I_k|} \left| \int_{I_k}\Psi_p(t,\,h)-\Psi_p(t,\,h^{(m)})\,dt  \right|
\\
~\leq~&{1\over |I_k|}\int_{I_k}\left|\Psi_p(t,\,h)-\Psi_p(t,\,h^{(m)})\right|\,dt
\\
~\leq~&{1\over |I_k|}\cdot \sqrt{|I_k|}\cdot\left( \int_{I_k} \left|\Psi_p(t,\,h)-\Psi_p(t,\,h^{(m)})\right|^2\,dt \right)^{1\over 2}~\to~0\quad \hbox{as } m\to +\infty
\end{align*}
for every $k$ and every $p$. Notice that the last inequality is a consequence of Jensen's inequality. This establishes the continuity of $\bar\Psi$. 

To see the continuity of $\bar\Phi^{\bar\varepsilon}$, we notice that the continuity of functionals $\varepsilon_{ij}^p(\cdot)$, $p\in\mathcal{P}_{ij},\,(i,\,j)\in\mathcal{W}$, immediately leads to the continuity of the functions $\bar\varepsilon_{ij}^p(\cdot)$ by definition. The rest of the proof simply follows from  \eqref{barPhidef1}.
\end{proof}

\subsection{Proof of Proposition \ref{thmclosedness}}\label{subsecappcompact}
\begin{proof}
We let $\bar h^{(m)}\in \bar\Lambda^n$, $m\geq 1$ be an arbitrary sequence of solutions that converge to some $\bar h^*\in\mathbb{R}_+^{n\times|\mathcal{P}|}$. Moreover,  for every $(i,\,j)\in\mathcal{W}$,
$$
\delta t\sum_{p\in\mathcal{P}_{ij}}\sum_{k=1}^n\bar h^*_p(k)~=~\delta t\sum_{p\in\mathcal{P}_{ij}}\sum_{k=1}^n\lim_{m\to+\infty}\bar h_p^{(m)}(k)~=~\lim_{m\to+\infty}\delta t\sum_{p\in\mathcal{P}_{ij}}\sum_{k=1}^n\bar h^{(m)}_p(k)~=~Q_{ij}
$$
\noindent where $\delta t$ is the time step size. This means that $\bar h^*\in\bar \Lambda^n$. We will next show that $\bar h^*$ is a solution of the VT-BR-DUE problem by proving that it satisfies the VI \eqref{fdvtbrduevi}. Indeed, for any $\bar h\in\bar\Lambda^n$, we have that 
$$
\sum_{p\in\mathcal{P}}\sum_{k=1}^n\bar\Phi_p^{\bar\varepsilon}(k,\,\bar h^{(m)})\big(\bar h_p(k)-\bar h_p^{(m)}(k)\big) ~\geq~0 \qquad\forall m\geq 1
$$
According to the continuity result provided by Lemma \ref{lemmabarphicont}, we deduce that
\begin{align*}
0~\leq~\lim_{m\to+\infty}\sum_{p\in\mathcal{P}}\sum_{k=1}^n\bar\Phi_p^{\bar\varepsilon}(k,\,\bar h^{(m)})\big(\bar h_p(k)-\bar h_p^{(m)}(k)\big)~=~&\sum_{p\in\mathcal{P}}\sum_{k=1}^n \lim_{m\to+\infty}\bar\Phi_p^{\bar\varepsilon}(k,\,\bar h^{(m)})\big(\bar h_p(k)-\bar h_p^{(m)}(k)\big)
\\
~=~&\sum_{p\in\mathcal{P}}\sum_{k=1}^n   \bar\Phi_p^{\bar\varepsilon}(k,\,\bar h^*)\big(\bar h_p(k)-\bar h_p^*(k)\big)
\end{align*}
for all $\bar h\in \bar \Lambda^n$. Thus $\bar h^*$ is a solution. We have thus shown that the set of discrete-time VT-BR-DUE solutions is closed.

To see that the set is also bounded, we have the following estimate for a given $n\geq 1$:
$$
\bar h_p(k)~\leq~ {\max_{(i,\,j)\in\mathcal{W}} Q_{ij} \over (t_f-t_0)/n}   \qquad\forall 1\leq k\leq n,~~\forall p\in\mathcal{P}
$$
\noindent for all $\bar h\in\bar\Lambda^n$. In other words, the vectors in $\bar\Lambda^n$ are element-wise uniformly bounded, thus their norms are also uniformly bounded. 
\end{proof}

\subsection{Proof of Proposition \ref{propsamesupport}}\label{subsecapppropsamesupport}

\begin{proof}
(i) According to assumption 3 in Theorem \ref{existencethembrdue}, there exists $\varepsilon^{min}>0$ such that $\varepsilon_{ij}^p(h)\geq \varepsilon^{min}$ for all $h\in\Lambda$. Such a property easily transfers to the finite-dimensional counterpart by  definition; that is, $\bar\varepsilon_{ij}^p(\bar h)\geq \varepsilon^{min}$ holds for all $\bar h\in\bar\Lambda^n$ and $p\in\mathcal{P}_{ij}$, $(i,\,j)\in\mathcal{W}$. Fix any number $0<\delta <\varepsilon^{min}$, we define a new set of tolerance functions $\tilde\varepsilon_{ij}^p(\cdot):\bar\Lambda^n\to \mathbb{R}_{++}$ such that
$$
\tilde\varepsilon_{ij}^p(\bar h)~=~\bar\varepsilon_{ij}^p(\bar h)-\delta\qquad\forall \bar h\in\bar\Lambda^n,\quad\forall p\in\mathcal{P}_{ij},\quad \forall (i,\,j)\in\mathcal{W}
$$
\noindent Clearly, these new tolerance functions are continuous. Hence the mapping $\bar\Phi^{\tilde\varepsilon}$, defined via \eqref{barPhidef1} by replacing $\bar\varepsilon_{ij}^p(\cdot)$ with $\tilde\varepsilon_{ij}^p(\cdot)$, is continuous as well. We then apply Browder's existence theorem to obtain a $\bar h^*\in\bar\Lambda^n$ such that 
$$
\sum_{p\in\mathcal{P}}\sum_{k=1}^n\bar \Phi^{\tilde\varepsilon}_p(k,\,\bar h^*)\big(\bar h_p(k)-\bar h_p^*(k)\big)~\geq~0\qquad\forall \bar h\in\bar\Lambda^n
$$
\noindent The VI above clearly leads to the following statement: for all $(i,\,j)\in\mathcal{W}$,
$$
\bar h^*_p(k)~>~0,~p\in\mathcal{P}_{ij}~\Longrightarrow~\bar\Psi_p(k,\,\bar h^*)~<~v_{ij}(\bar h^*)+\tilde\varepsilon_{ij}^p(\bar h^*)~<~v_{ij}(\bar h^*)+\bar\varepsilon_{ij}^p(\bar h^*)
$$
\noindent where $v_{ij}(\bar h^*)$ is the minimum effective delay within $(i,\,j)$. Thus $\bar h^*$ is a solution with the {\bf (P)} property.\\

\noindent (ii) Let $\bar h^*$ be a solution with the {\bf (P)} property. We define
\begin{equation}\label{proofpropsamesupporteqn0}
\sigma~\doteq~\min_{\mathcal{O}(I_k,\,p)\in\mathcal{F}(\bar h^*)}\left\{v_{ij}(\bar h^*)+\bar\varepsilon_{ij}^p(\bar h^*)-\bar\Psi_p(k,\,\bar h^*) ,~~ p\in\mathcal{P}_{ij}  \right\}~>~0
\end{equation}
According to the continuity of $\bar\Psi(\cdot)$ and $\bar\varepsilon_{ij}^p(\cdot)$, there exists a $\delta>0$ such that whenever $\left\|\bar h-\bar h^*\right\|_2<\delta$ there holds 
\begin{equation}\label{proofpropsamesupporteqn1}
\left\|\bar\Psi(\bar h)-\Psi(\bar h^*)\right\|_2<\sigma/3,\qquad \left|\bar\varepsilon_{ij}^p(\bar h)-\bar\varepsilon_{ij}^p(\bar h^*)\right|<\sigma/3   \qquad \forall p\in\mathcal{P}_{ij},\,\forall (i,\,j)\in\mathcal{W}
\end{equation}
\noindent  where $\|\cdot\|_2$ is the Euclidean norm.  Fix any point $\bar h\in \mathcal{B}_{\bar h^*}^{\delta}\cap\bar\Lambda^n\cap span\left\{\mathbf{e}_{l}:~l\in\mathcal{F}(\bar h^*)\right\}$. We show that $\bar h$ is a solution. First of all, notice that since $\left\|\bar h-\bar h^*\right\|_2<\delta$, we must have that 
\begin{equation}\label{proofpropsamesupporteqn2}
\left|\bar\Psi_p(k,\,\bar h)-\bar\Psi_p(k,\,\bar h^*)\right|~\leq~\left\|\bar\Psi(\bar h)-\bar\Psi(\bar h^*)\right\|_2~<~\sigma/3 \qquad \forall 1\leq k\leq n,~\forall p\in\mathcal{P}
\end{equation}
\noindent Consequently, we must also have
\begin{equation}\label{proofpropsamesupporteqn3}
\left|v_{ij}(\bar h)-v_{ij}(\bar h^*)\right|~<~\sigma/3\qquad \forall (i,\,j)\in\mathcal{W}
\end{equation}

\noindent Given the fact that $\bar h\in span\left\{\mathbf{e}_{l}:~l\in\mathcal{F}(\bar h^*)\right\}$, for any $(i,\,j)$,
\begin{equation}\label{proofpropsamesupporteqn4}
\bar h_p(k)~>~0,\, p\in\mathcal{P}_{ij}~\Longrightarrow~\bar\Psi_p(k,\,\bar h^*)~<~v_{ij}(\bar h^*)+\bar\varepsilon_{ij}^p(\bar h^*)
\end{equation}
\noindent A consequence of the right hand side of \eqref{proofpropsamesupporteqn4}, along with \eqref{proofpropsamesupporteqn0}-\eqref{proofpropsamesupporteqn3}, is that 
\begin{align*}
\bar \Psi_p(k,\,\bar h)~<~\sigma/3+\bar\Psi_p(k,\,\bar h^*)~\leq~&\sigma/3+v_{ij}(\bar h^*)+\bar \varepsilon_{ij}^p(\bar h^*)-\sigma
\\
~\leq~&\sigma/3+\sigma/3+v_{ij}(\bar h)+\sigma/3+\bar\varepsilon_{ij}^p(\bar h)-\sigma
\\
~=~&v_{ij}(\bar h)+\bar\varepsilon_{ij}^p(\bar h)
\end{align*}
\noindent We have established the following:
\begin{equation}\label{proofpropsamesupporteqn5}
\bar h_p(k)~>~0,\, p\in\mathcal{P}_{ij}~\Longrightarrow~\bar \Psi_p(k,\,\bar h)~<~v_{ij}(\bar h)+\bar\varepsilon_{ij}^p(\bar h), \quad \forall (i,\,j)\in\mathcal{W},
\end{equation}
\noindent which shows that $\bar h_p(k)$ is a solution with the {\bf (P)} property.\\

\noindent (iii) We note that the set $\mathcal{B}_{\bar h^*}^{\delta}~\cap~\bar\Lambda^n~\cap~ span\Big\{\mathbf{e}_l:~ l\in \mathcal{F}(\bar h^*) \Big\}$ is infinite and, by \eqref{proofpropsamesupporteqn5}, every point in this set has the {\bf (P)} property.
\end{proof}

\subsection{Proof of Proposition \ref{propconnected}}\label{subsecapppropconnected}
\begin{proof}
By contradiction, if $\mathcal{C}(\bar h^*)$ is not connected, then there exist two subsets, $A$ and $B$, of $\mathcal{C}(\bar h^*)$ such that $A\cap B=\emptyset$ and $A\cup B=\mathcal{C}(\bar h^*)$. Moreover, both $A$ and $B$ are open in the relative topology. That is, there exist open sets $A^0,\,B^0\subset \mathbb{R}^{n\times|\mathcal{P}|}$ such that
$$
A~=~\mathcal{C}(\bar h^*)~\cap~A^0,\qquad B~=~\mathcal{C}(\bar h^*)~\cap~B^0
$$ 
\noindent Without loss of generality, we assume $\bar h^*\in A$. According to the way $\mathcal{C}(\bar h^*)$ is constructed, there exists at least one point $\bar h^2 \in B$ such that $\bar h^2\in\mathcal{S}(\bar h^1)$ for some $\bar h^1\in A$. Since $\mathcal{S}(\bar h^1)$ is convex, it is connected. We thus consider two nonempty subsets of $\mathcal{S}(\bar h^1)$: $A'\doteq\mathcal{S}(\bar h^1)\cap A$ and $B'\doteq \mathcal{S}(\bar h^1)\cap B$. Clearly $A'\cap B'=\emptyset$ and  $A'\cup B'=\mathcal{S}(\bar h^1)$. In addition, 
\begin{align*}
A'~=~&\mathcal{S}(\bar h^1)~\cap~\mathcal{C}(\bar h^*)~\cap~A^0~=~\mathcal{S}(\bar h^1)~\cap~A^0
\\
B'~=~&\mathcal{S}(\bar h^1)~\cap~\mathcal{C}(\bar h^*)~\cap~B^0~=~\mathcal{S}(\bar h^1)~\cap~B^0
\end{align*}
\noindent which shows that both $A'$ and $B'$ are open in the relative topology. Thus $\mathcal{S}(\bar h^1)$ is not connected and we have reached a contradiction. 
\end{proof}

\end{document}